\newcommand{\leftlim}{\mathop{\displaystyle\lim_{\longleftarrow}}}
\newcommand{\scrH}{{\mathscr{H}}}
\numberwithin{equation}{section}
\newcommand{\E}{\mathcal{E}}
\newcommand{\F}{\mathcal{F}}
\newcommand{\cO}{\mathcal{O}}
\newcommand{\sD}{{\mathscr D}}
\newcommand{\dbQ}{{\mathbb{Q}}}
\newcommand{\BA}{{\mathbb{A}}}
\newcommand{\BD}{{\mathbb{D}}}
\newcommand{\BG}{{\mathbb{G}}}
\newcommand{\BF}{{\mathbb{F}}}
\newcommand{\BN}{{\mathbb{N}}}
\newcommand{\BP}{{\mathbb{P}}}
\newcommand{\BQ}{{\mathbb{Q}}}
\newcommand{\BZ}{{\mathbb{Z}}}
\newcommand{\fp}{{\mathfrak p}}
\newcommand{\fm}{{\mathfrak m}}
\newcommand{\fT}{{\mathfrak T}}
\newcommand{\dbQbar}{\overline{\dbQ}}
\newcommand{\BQbar}{\overline{\dbQ}}
\newcommand{\iso}{\buildrel{\sim}\over{\longrightarrow}}
\newcommand{\epi}{\twoheadrightarrow}
\newcommand{\mono}{\hookrightarrow}
\DeclareMathOperator{\Br}{{Br}}
\DeclareMathOperator{\Card}{{Card}}
\DeclareMathOperator{\Tr}{{Tr}}
\DeclareMathOperator{\Sh}{{Sh}}
\DeclareMathOperator{\Spec}{{Spec}}
\DeclareMathOperator{\Gal}{{Gal}}
\DeclareMathOperator{\Hilb}{{Hilb}}
\DeclareMathOperator{\red}{{red}}
\DeclareMathOperator{\codim}{{codim}}
\DeclareMathOperator{\tame}{{tame}}
\DeclareMathOperator{\mix}{{mix}}
\DeclareMathOperator{\mot}{{mot}}
\DeclareMathOperator{\LS}{{LS}}
\DeclareMathOperator{\LLS}{{\mathcal{LS}}}
\DeclareMathOperator{\Hom}{{Hom}}
\newcommand{\HOM}{\underline{\operatorname{Hom}}}
\DeclareMathOperator{\Mor}{{Mor}}
\DeclareMathOperator{\End}{{End}}
\DeclareMathOperator{\Ext}{{Ext}}
\DeclareMathOperator{\Aut}{{Aut}}
\DeclareMathOperator{\Ker}{{Ker}}
\DeclareMathOperator{\Fr}{{Fr}}
\DeclareMathOperator{\im}{{Im}}
\DeclareMathOperator{\Perv}{{Perv}}
\DeclareMathOperator{\Pol}{{Pol}}
\DeclareMathOperator{\Res}{{Res}}
\theoremstyle{plain}
\newtheorem{theorem}{Theorem}[section]
\newtheorem{lem}[theorem]{Lemma}
\newtheorem{prop}[theorem]{Proposition}
\newtheorem{defin}[theorem]{Definition}
\newtheorem{cor}[theorem]{Corollary}
\newtheorem{conj}[theorem]{Conjecture}
\theoremstyle{definition}
\newtheorem{rems}[theorem]{Remarks}
\newtheorem{rem}[theorem]{Remark}
\newtheorem{example}[theorem]{Example}
\newtheorem{quest}[theorem]{Question}
\newcommand{\SupSet}{\raise1.75pt
     \hbox{$\,\,\scriptstyle\supset\,\,$}}
\newcommand{\SubSet}{\raise1.75pt
     \hbox{$\,\,\scriptstyle\subset\,\,$}}
\renewcommand{\over}{\@@over}
\renewcommand{\atop}{\@@atop}
\renewcommand{\above}{\@@above}
\renewcommand{\overwithdelims}{\@@overwithdelims}
\renewcommand{\atopwithdelims}{\@@atopwithdelims}
\renewcommand{\abovewithdelims}{\@@abovewithdelims}
\begin{document} 

\title{On a conjecture of Deligne}
\author{Vladimir Drinfeld}
\address{University of Chicago, Department of Mathematics, Chicago, IL 60637}
\email{drinfeld@math.uchicago.edu}
\dedicatory{Dedicated to the memory of I.M.Gelfand}

%\date{April 12, 2000}
\thanks{Partially supported by NSF grant DMS-1001660}
%\\begin{abstract}
%\We give an explicit formula 
%\for the determinant of the Gau{\ss}-Manin connection for an irregular
%\connection on a Zariski open set of the projective line $\P^1_K$ 
%\over a function field $K$
%\over a field $k$ of  characteristic zero.
%\
%\\end{abstract}
%\subjclass{Primary  14C40 19E20 14C99}

%\date{April 12, 2000}
\begin{abstract}
Let $X$ be a smooth variety over $\BF_p$. Let 
$E$ be a number field. For each nonarchimedean place
$\lambda$ of $E$ prime to $p$ consider the set of isomorphism
classes of irreducible lisse $\overline{E}_{\lambda}$-sheaves
on $X$ with determinant of finite order such that for every closed point
$x\in X$ the characteristic polynomial of the Frobenius $F_x$ has coefficents in $E$.
We prove that this set does not depend on $\lambda$.

The idea is to use a method developed by G.~Wiesend to reduce the
problem to the case where $X$ is a curve. This case was treated by
L.~Lafforgue.

\medskip

2000 Math. Subj. Classification: 14G15, 11G35.

Key words: $\ell$-adic representation, independence of $\ell$, local system, Langlands conjecture,
arithmetic scheme, Hilbert irreducibility, weakly motivic.
%simplicial set, cyclic set, geometric realization,cyclic homology, fiber functor
\end{abstract}
%\subjclass{Primary  14G15 11G35}

\maketitle

\section{Introduction}   \label{s:main theorem}
 \subsection{Main theorem}   \label{ss:main theorem}
 
 \begin{theorem}  \label{t:my_result}
Let $X$ be a smooth scheme over $\BF_p$.
Let $E$ be a finite extension of $\BQ$. Let $\lambda$, $\lambda'$ be nonarchimedean places of 
$E$ prime to $p$ and $E_{\lambda}$, $E_{\lambda'}$ the corresponding completions. Let  $\E$ be a 
 lisse $\overline{E}_{\lambda'}$-sheaf on $X$ such that
 for every closed point $x\in X$ the polynomial $\det (1-F_xt, \E)$ has coefficients in $E$ and its
 roots %in $\overline{E}_{\lambda}$ 
 are $\lambda$-adic units.
 Then there exists a lisse $\overline{E}_{\lambda}$-sheaf on $X$ compatible with $\E$
  (i.e., having the same characteristic polynomials of the operators $F_x$ for all 
  closed points $x\in X$).
\end{theorem}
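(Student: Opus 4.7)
The plan is to combine Lafforgue's resolution of the Langlands correspondence for curves with a Wiesend-type gluing argument that constructs lisse sheaves on $X$ from compatible families on curves mapping to $X$.

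\smallskip

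\emph{Reductions and the curve case.} After replacing $\E$ by its semisimplification (which does not change the characteristic polynomials of Frobenius) and decomposing into irreducible constituents, I may assume $X$ is connected and $\E$ is irreducible. A twist by a rank-one sheaf further reduces to the case where $\det \E$ has finite order; the required twisting character is obtained by solving the $n=1$ version of the theorem via class field theory for $X$. For every morphism $\nu : C \to X$ from a smooth curve $C$ over $\BF_p$, the pullback $\nu^*\E$ then satisfies the hypotheses of Lafforgue's theorem, which produces a lisse $\overline{E}_\lambda$-sheaf $\F_\nu$ on $C$ compatible with $\nu^*\E$. Uniqueness of $\F_\nu$ up to semisimplification follows from Chebotarev density.

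\smallskip

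\emph{Gluing.} The heart of the proof is to assemble the family $\{\F_\nu\}_\nu$ into a single lisse $\overline{E}_\lambda$-sheaf $\F$ on $X$, equivalently a continuous representation $\rho : \pi_1(X) \to \operatorname{GL}_n(\overline{E}_\lambda)$ whose characteristic polynomials at closed-point Frobenii match those of $\E$. The Wiesend philosophy is that $\pi_1(X)$ is controlled, in a suitably precise sense, by the fundamental groups $\pi_1(C)$ of smooth curves $C$ mapping to $X$ together with Frobenius elements at closed points: a continuous representation of $\pi_1(X)$ is determined by its restrictions to such $\pi_1(C)$, and conversely a compatible family of representations on curves glues to $X$ provided sufficiently many curves pass through each closed point and each pair of closed points. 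Applying this principle to $\{\F_\nu\}_\nu$, whose values on closed-point Frobenii are forced by the given characteristic polynomials of $\E$, produces the desired $\rho$ and hence $\F$.

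\smallskip

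\emph{Main obstacle.} The difficult part is establishing the gluing principle itself in the lisse-sheaf setting: one must show that $\pi_1(X)$ is topologically generated, in a quantitative sense accounting for compact-open neighborhoods of the identity, by the images of $\pi_1(C)$ for enough smooth curves $C \to X$. This in turn requires a Bertini/Hilbert-irreducibility input over $\BF_p$ producing many smooth curves through prescribed closed points, and a careful analysis showing that matching of characteristic polynomials on closed-point Frobenii together with matching on each such curve really pins down the representation globally, not merely up to semisimplification on the abelianization. Once that combinatorial-geometric input is secured, the rest of the proof is formal: the curve-by-curve outputs of Lafforgue's theorem glue, and the compatibility of the resulting $\F$ with $\E$ at every closed point is built into the construction.
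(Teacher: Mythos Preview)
Your outline has the right architecture—Lafforgue on curves plus a Wiesend-type passage from curves to $X$—but it omits the one idea that makes the gluing go through, and the paper's proof hinges on precisely this point.

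The gap is the absence of any \emph{tameness} input. Over $\BF_p$ the group $\pi_1(X)$ is not topologically finitely generated, so there is no hope of finding finitely many curves whose $\pi_1$'s jointly ``see'' an arbitrary quotient of $\pi_1(X)$; your sentence about $\pi_1(X)$ being generated by images of $\pi_1(C)$ is not a theorem one can prove in this generality. What the paper actually does is use the \emph{given} $\overline{E}_{\lambda'}$-sheaf $\E$ to manufacture a tameness condition: after an \'etale cover, the pullback of $\E$ becomes tame along every curve, and tameness is $\ell$-independent (via Deligne's result on Swan conductors), so the same holds for the putative $\overline{E}_\lambda$-companion. This tameness forces the relevant quotient $\pi_1(X)/H$ to be \emph{almost pro-$\ell$}, hence topologically finitely generated. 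Only then does Hilbert irreducibility produce a \emph{single} curve $C\to X$ with $\pi_1(C)\twoheadrightarrow\pi_1(X)/H$; the curve representation from Lafforgue then factors through $\pi_1(X)/H$ (this is a short \v{C}ebotarev argument), and a Faltings-type finite-determination lemma or a compactness argument checks the Frobenius polynomials at all remaining closed points. Your ``Main obstacle'' paragraph gestures toward finite generation but never names the mechanism that supplies it.

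Two smaller points. First, your reductions are unnecessary and a bit slippery: decomposing $\E$ into $\overline{E}_{\lambda'}$-irreducibles may destroy the $E$-rationality of the Frobenius polynomials, and the determinant twist you describe does not obviously preserve the hypotheses either. The paper avoids all of this by working directly with the function $x\mapsto\det(1-F_xt,\E)$ as an element of $\widetilde{\LS}_r(X)$ and checking the two conditions of the main technical theorem. Second, the paper does need a short Brauer-group lemma to descend from $\overline{E}_\lambda$ to a finite extension $F/E_\lambda$ (of degree $r!$) before invoking the technical theorem, since Lafforgue on curves only gives $\overline{E}_\lambda$-sheaves a priori; you should account for this step as well.
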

 
 %This is a part of \cite[Conjecture~1.2.10)]{De}. According to this conjecture, 
 According to a conjecture of Deligne (see \S\ref{ss:conj} below),
 Theorem~\ref{t:my_result} should hold for \emph{any normal scheme} of finite type over $\BF_p$.
 
 \begin{rem}   \label{r:Lafforgue}
If $\dim X=1$ then Theorem~\ref{t:my_result} is a well known corollary of the
Langlands conjecture for $GL(n)$ over functional fields proved by L.~Lafforgue \cite{La}.
More precisely, it immediately follows from \cite[Theorem VII.6]{La}.
\end{rem}

We will deduce Theorem~\ref{t:my_result} from the particular case $\dim X=1$ using
a powerful and general method developed by G.~Wiesend \cite{W1} (not long before his untimely
death).

 %The following conjecture was formulated in \cite{De}. More precisely, it corresponds to
% statement (ii) and a part of statement (v) of \cite[Conjecture~1.2.10]{De}.
 
% \subsection{The status of some related conjectures by Deligne}
  \subsection{Deligne's conjecture}  \label{ss:conj}
  Here is a  part of  \cite[Conjecture~1.2.10)]{De}.
  %Here are some other parts of  \cite[Conjecture~1.2.10)]{De}.

\begin{conj}  \label{c:Deligne}
Let $X$ be a normal scheme of finite type over $\BF_p$, $\ell\ne p$ a prime, and $\E$ an irreducible lisse $\dbQbar_{\ell}$-sheaf on $X$ whose determinant has finite order. 

\begin{enumerate}
\item[(a)] There exists a subfield $E\subset\dbQbar_{\ell}$ finite over $\BQ$
such that for every closed point $x\in X$ the
polynomial $\det (1-F_xt, \E)$ has coefficients in $E$.
\item[(b)] For a possibly bigger $E$ and every nonarchimedean place $\lambda$ of
$E$ prime to $p$ there exists a lisse $E_{\lambda}$-sheaf compatible with $\E$.
\item[(c)] The roots of the polynomials $\det (1-F_xt, \E)$ (and therefore their inverses) are 
integral over $\BZ [p^{-1}]$
\end{enumerate}
\end{conj}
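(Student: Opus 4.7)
The plan is to reduce Deligne's conjecture for a normal $X$ to two inputs: Lafforgue's Langlands correspondence for curves (see Remark~\ref{r:Lafforgue}), which yields (a) and (c) for curves directly, and Theorem~\ref{t:my_result}, which yields (b) for smooth $X$ once (a) and (c) are known. Since $X$ is normal, it contains a smooth dense open subscheme $U$ with $\pi_1(U) \twoheadrightarrow \pi_1(X)$ surjective; I would first prove (a)--(c) on $U$ and then extend the resulting companion from $U$ back to $X$.

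For parts (a) and (c) on the smooth open $U$, I would use a Bertini-type theorem for lisse sheaves to produce, through every closed point $x \in U$, a smooth curve $C \hookrightarrow U$ such that $\E|_C$ remains irreducible with determinant of finite order. Lafforgue's theorem attaches to $\E|_C$ a cuspidal automorphic representation whose Hecke eigenvalues lie in a number field $E_C$ and are $q$-Weil numbers; in particular the roots of $\det(1 - F_x t, \E|_C)$ for $x \in |C|$ are algebraic integers whose inverses are integral over $\BZ[p^{-1}]$, giving (c). For (a), one needs the $E_C$ to fit inside one common finite extension $E/\BQ$ as $C$ varies; this should follow from an $\ell$-adic boundedness argument using the uniform Weil bounds from Lafforgue, together with Chebotarev density to compare trace data on different curves and pin down the field of definition of the semisimple representation $\pi_1(U) \to GL_n(\dbQbar_\ell)$.

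Once (a) and (c) are established on $U$, the hypotheses of Theorem~\ref{t:my_result} hold for $\E|_U$ at every nonarchimedean place $\lambda$ of $E$ prime to $p$, because (c) makes the roots $\lambda$-adic units. Theorem~\ref{t:my_result} then produces a compatible lisse $\overline{E}_\lambda$-sheaf $\F_U$ on $U$, establishing (b) over $U$.

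The main obstacle is descending $\F_U$ from $U$ to the normal scheme $X$, i.e., showing that the representation $\rho_\lambda \colon \pi_1(U) \to GL_n(\overline{E}_\lambda)$ factors through $\pi_1(X)$. By Zariski--Nagata purity for the \'etale fundamental group of a normal scheme, it suffices to check triviality of the local inertia at each codimension-one generic point $\eta$ of $X \setminus U$. On a smooth curve $C \subset X$ crossing $\eta$ transversally, compatibility forces $\F_U|_{C \cap U}$ to share Frobenius characteristic polynomials with $\E|_C$, which is already lisse at $\eta$. The subtlety is that matching characteristic polynomials is a priori weaker than matching inertia actions, so bridging this gap will likely require either a refinement of Wiesend's curve criterion that also controls local monodromy at boundary points, or a detour through a de Jong alteration $X' \to X$ to which the smooth case applies, followed by a Galois-descent argument down to $X$. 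I expect this extension step to be the main technical hurdle, since even the smooth case settled by Theorem~\ref{t:my_result} already requires the full strength of Wiesend's method.
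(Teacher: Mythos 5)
This statement is a \emph{conjecture} in the paper, not a theorem the paper proves: the text of \S\ref{ss:conj} only records what is known --- (c) and the algebraicity part of (a) are due to Lafforgue's Bertini argument, the \emph{finiteness} of $E$ in (a) is a separate theorem of Deligne \cite{De2,EK}, and (b) is obtained only for \emph{smooth} $X$ by combining (a), (c), Theorem~\ref{t:my_result}, and Chin's theorem \cite{Ch} (the latter is needed to pass from $\overline{E}_{\lambda}$- to $E_{\lambda}$-coefficients, a point your proposal omits). Part (b) for general normal $X$ is left open; see Remark~\ref{r:Sasha's counterex}(ii).

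Your proposal therefore claims more than the paper does, and the extra step is exactly where it breaks. Your descent of the companion from the smooth open $U$ to the normal scheme $X$ invokes Zariski--Nagata purity to reduce to codimension-one inertia; but purity requires \emph{regularity}, and for a normal, non-regular $X$ the kernel of $\pi_1(U)\twoheadrightarrow\pi_1(X)$ is not controlled by inertia at codimension-one points of $X\setminus U$. This is not a technicality: the counterexamples of \S\ref{s:counterex} exhibit normal surfaces over $\BF_q$ on which the curve criterion (Theorem~\ref{Mainth}) fails, precisely because a lisse sheaf on a desingularization with the ``right'' Frobenius data need not descend, and the alternative route you suggest (alteration plus Galois descent) runs into the same obstruction, since companions are characterized only by characteristic polynomials and carry no descent data. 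Separately, your treatment of (a) is too thin: Lafforgue's curve argument gives only that each coefficient of $\det(1-F_xt,\E)$ is algebraic; the statement that all these numbers generate a single finite extension of $\BQ$ is Deligne's theorem \cite{De2}, whose proof requires a genuine boundedness-of-ramification argument, not merely ``uniform Weil bounds plus Chebotarev''. So the correct assessment is: your outline reproduces, in weaker form, the known reductions for smooth $X$, but the passage to normal $X$ is an open problem, not a fillable gap.
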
  

In the case of curves Conjecture~\ref{c:Deligne} was completely proved by 
Lafforgue \cite[Theorem VII.6]{La}. Using a Bertini argument\footnote{The Bertini argument is clarified in \cite[\S 1.5-1.9]{De2}. A 
%Some care is needed if $X$ is not complete, see Lemma~\ref{l:geometricBertini} from 
%Appendix~\ref{s:Bertini-Poonen} below.}
somewhat similar technique is explained in Appendix~\ref{s:Bertini-Poonen} below.}, he deduced from this a part of Conjecture~\ref{c:Deligne}
for $\dim X>1$. Namely, he proved statement (c) and the following part of (a):
all the coefficients of the polynomials $\det (1-F_xt, \E)$, $x\in X$, are algebraic numbers. 
The fact that the extension of $\BQ$ generated by these algebraic numbers is finite
was proved by Deligne \cite{De2, EK}. Combining (a), (c), Theorem~\ref{t:my_result}, and
the main result of \cite{Ch} one gets (b) in the case where $X$ is smooth (one needs \cite{Ch} to pass from $\overline{E}_{\lambda}$-sheaves to $E_{\lambda}$-sheaves.)

 \subsection{An open question} \label{s:open}
%\subsection{Some open questions} \label{s:open}
We will deduce Theorem~\ref{t:my_result} from the particular case $\dim X=1$ treated by 
L.~Lafforgue \cite{La} and a more technical Theorem~\ref{Mainth} (in which we consider an arbitrary regular scheme $X$ of finite type over $\BZ [\ell^{-1}]$). Following G.~Wiesend \cite{W1}, we will prove Theorem~\ref{Mainth} ``by pure thought" (see \S\ref{ss:steps} for more details).
Such a proof of Theorem~\ref{t:my_result} turns out to be possible because Wiesend's method allows to \emph{bypass} the following %difficult 
problem.

\begin{quest}   \label{q:3}
Let $X$ be an irreducible smooth variety over $\BF_q$ and let
$\E$ be an irreducible lisse $\dbQbar_{\ell}$-sheaf of rank $r$ on $X$ whose determinant has
finite order. Let $K$ be the field of rational functions on $X$ and let $\rho$ be the
$\ell$-adic representation of $\Gal (\bar K/K)$ corresponding to $\E$. Is it true that
a certain Tate twist of $\rho$ appears as a subquotient of $H^i(Y\otimes_K\bar K,\dbQbar_{\ell})$ for some algebraic variety $Y$ over $K$ and some number $i\,$?
\end{quest}

L.~Lafforgue \cite{La} gave a positive answer to Question~\ref{q:3} if $\dim X=1$. 
Without assuming that $\dim X=1$, we prove in
this article that $\rho$ is a part of a compatible system of 
representations $\rho_{\lambda}: \Gal (\bar K/K)\to GL(r,\overline{E}_{\lambda} )$, where
$E\subset \dbQbar_{\ell}$ is the number field generated by the coefficients of the polynomials 
$\det (1-F_xt, \E)$, $x\in X$, and $\lambda$ runs through the set of all
nonarchimedean places of $E$ prime to $p$. Nevertheless, {\em Question~\ref{q:3} remains open\,} if $\dim X>1$ because it is not clear how to formulate a motivic analog of the construction from
\S\ref{ss:constructing}.

\subsection{Application: the Grothendieck group of weakly motivic $\BQbar_{\ell}$-sheaves}
Theorem~\ref{t:my_result} allows to associate to each scheme $X$ of finite type over $\BF_p$
a certain group $K_{\mot}(X,\BQbar )$ (where ``mot" stands for ``motivic") and according to
a theorem of Gabber~\cite{Fuj}, the ``six operations" are well defined on $K_{\mot}$.
Details are explained in \S\ref{sss:mot1}-\ref{sss:mot3} below.

Morally, $K_{\mot}(X,\BQbar )$ should be the Grothendieck group of the ``category of motivic
$\BQbar$-sheaves" on $X$. (Here the words in quotation marks %are mere poetry, they 
do not refer to any precise notion of motivic sheaf.)

\subsubsection{A corollary of Theorem~\ref{t:my_result}}  \label{sss:mot1}
 Let $X$ be a scheme of finite type over $\BF_p$. The set of its closed points will be denoted by $|X|$.
 Let $\ell$ be a prime different from
 $p$ and let  $\BQbar_{\ell}$ be an algebraic closure of $\BQ_{\ell}$.
Let $\Sh (X,\BQbar_{\ell})$ be the abelian category of $\BQbar_{\ell}$-sheaves on $X$ and
$\sD(X,\BQbar_{\ell})=D^b_c (X,\BQbar_{\ell})$ the bounded $\ell$-adic
derived category \cite[\S1.2-1.3]{De}. 
 
 Now let $\BQbar$ be an algebraic closure of $\BQ$. Suppose that we are given a map
 \begin{equation}   \label{e:Gamma}
 \Gamma: |X|\to\{\mbox{subsets of }\BQbar^{\times}\}, \quad \quad x\mapsto\Gamma_x \,.
 \end{equation}

 Once we choose a prime $\ell\ne p$, an algebraic closure $\BQbar_{\ell}\supset \BQ_{\ell}$,
 and an embedding $i:\BQbar\mono\BQbar_{\ell}$ we can consider the following
 full subcategory $\Sh_{\Gamma} (X,\BQbar_{\ell},i)\subset\Sh (X,\BQbar_{\ell})$:
 a $\BQbar_{\ell}$-sheaf $\F$ is in $\Sh_{\Gamma} (X,\BQbar_{\ell})$ if for every closed point $x\in X$ all eigenvalues of the geometric Frobenius $F_x:\F_x\to\F_x$ are in $i(\Gamma_x )$.
Let $\sD_{\Gamma}(X,\BQbar_{\ell},i)\subset \sD_{\Gamma}(X,\BQbar_{\ell})$ be the full subcategory
of complexes whose cohomology sheaves are in $\Sh_{\Gamma} (X,\BQbar_{\ell},i)$.
Let $K_{\Gamma}(X,\BQbar_{\ell},i)$ denote 
the Grothendieck group of $\Sh_{\Gamma} (X,\BQbar_{\ell},i)$, which is the same as 
the Grothendieck group of $\sD_{\Gamma} (X,\BQbar_{\ell},i)$.

For any field $E$ set
\[
A(E):=\{f\in E(t)^{\times}\;\bigl\lvert \;  f(0)=1 \}.  
\]
A sheaf $\F\in\Sh_{\Gamma} (X,\BQbar_{\ell},i)$ defines a map 
\[
f_{\F}:|X|\to A(i(\BQbar))=A(\BQbar), \quad \quad x\mapsto \det (1-F_xt,\F).
\]
For any subsheaf $\F'\subset\F$ we have  $f_{\F}=f_{\F'}f_{\F/\F'}$, so
we get a homomorphism $K_{\Gamma}(X,\BQbar_{\ell},i)\to A(\BQbar )^{|X|}$, where 
$A(\BQbar )^{|X|}$ is the group of all maps $|X|\to A(\BQbar )$. 

\begin{lem}  \label{l:stand2} 
This map is injective.
\end{lem}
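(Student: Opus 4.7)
The plan is to use Noetherian induction on $X$, reducing the question to lisse sheaves on a connected scheme, where Chebotarev density together with the Brauer--Nesbitt theorem concludes. Every element of $K_\Gamma(X,\BQbar_{\ell},i)$ has the form $[\F]-[\F']$; since $\Sh_\Gamma(X,\BQbar_{\ell},i)$ is closed under subquotients (the Frobenius eigenvalues on any subquotient are inherited from the ambient sheaf) and admits Jordan--H\"older series, $[\F]$ coincides with the class of its semisimplification. So I would first reduce to showing: if $\F$ and $\F'$ are semisimple objects of $\Sh_\Gamma(X,\BQbar_{\ell},i)$ with $f_\F=f_{\F'}$, then $\F\cong\F'$.

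Next I would apply Noetherian induction. Choose a non-empty open $U\subset X$ over which both $\F|_U$ and $\F'|_U$ are lisse (such a $U$ exists by constructibility), set $Z=X\setminus U$, and let $j:U\hookrightarrow X$, $i:Z\hookrightarrow X$. The canonical exact sequence
\[
0\to j_!j^*\F\to\F\to i_*i^*\F\to 0
\]
decomposes the class as $[\F]=[j_!(\F|_U)]+[i_*(\F|_Z)]$, and a stalk computation shows that $f_{j_!(\F|_U)}$ equals $f_{\F|_U}$ on $|U|$ and $1$ on $|Z|$, while $f_{i_*(\F|_Z)}$ equals $1$ on $|U|$ and $f_{\F|_Z}$ on $|Z|$. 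Hence $f_\F=f_{\F'}$ forces $f_{\F|_U}=f_{\F'|_U}$ and $f_{\F|_Z}=f_{\F'|_Z}$ separately; the induction hypothesis applied to the proper closed subscheme $Z$ handles the second, reducing the problem to lisse sheaves on $U$.

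For the lisse case, I would decompose $U$ into its finitely many connected components and argue on each. On a connected component $V$ of finite type over $\BF_p$, a lisse $\BQbar_{\ell}$-sheaf is a continuous finite-dimensional $\BQbar_{\ell}$-representation of the \'etale fundamental group $\pi_1(V)$. By Chebotarev density, the Frobenius conjugacy classes $\{F_x:x\in|V|\}$ are dense in $\pi_1(V)$, so continuity of the function $\gamma\mapsto\det(1-\gamma t,\F)$ determines it on all of $\pi_1(V)$ from its values on the Frobenii. By the Brauer--Nesbitt theorem, two semisimple representations with equal characteristic polynomials on all elements are isomorphic. Hence $\F|_V\cong\F'|_V$ for each component $V$, so $\F|_U\cong\F'|_U$, completing the reduction.

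I expect the principal obstacle to be mostly bookkeeping rather than conceptual: verifying that the terms of the exact sequence remain in $\Sh_\Gamma$ (immediate, since closed-point stalks are either unchanged or zero), that a common lisse open locus $U$ exists for two sheaves (uses constructibility on each irreducible component), and that the Noetherian induction terminates at $\dim X=0$, where the statement reduces to the elementary fact that a continuous $\BQbar_{\ell}$-representation of $\Gal(\overline{\BF_q}/\BF_q)$ is determined up to semisimplification by its Frobenius characteristic polynomial. The two substantive inputs---Chebotarev density in $\pi_1(V)$ and Brauer--Nesbitt---are classical.
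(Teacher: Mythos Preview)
Your proposal is correct and follows essentially the same approach as the paper's proof: stratify $X$ to reduce to lisse sheaves on a nice open, then invoke \v{C}ebotarev density (together with Brauer--Nesbitt, which the paper leaves implicit). The paper compresses this to two sentences, while you spell out the Noetherian induction and the $j_!/i_*$ d\'evissage explicitly.

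One small point worth tightening: the paper reduces to the case where the stratum is \emph{normal}, and you should do the same. The version of \v{C}ebotarev density cited in the paper (Serre~[S]) is stated for normal schemes, and on a merely connected but non-normal scheme the passage from ``Frobenii are dense in every finite quotient of $\pi_1$'' to the conclusion is less automatic. This costs nothing: when choosing your open $U$, shrink it further so that $U$ lies in the regular locus of $X_{\red}$ (dense open since $\BF_p$ is perfect). Then each connected component of $U$ is normal and integral, and your Chebotarev\,+\,Brauer--Nesbitt step goes through cleanly. With that adjustment your argument matches the paper's.
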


\begin{proof}
We have to show that if 
$\F_1,\F_2\in\Sh_{\Gamma} (X,\BQbar_{\ell},i)$ have equal images in $A(\BQbar )^{|X|}$ then they have equal classes in $K_{\Gamma}(X,\BQbar_{\ell},i)$. Stratifying $X$, one reduces this to the
case where $\F_1,\F_2$ are lisse and $X$ is normal. Then use the \v {C}ebotarev density theorem.
\end{proof}
%A standard argument based on the \v {C}ebotarev density theorem shows that this map is injective. %This allows to consider $K_{\Gamma}(X,\BQbar_{\ell},i)$ as a subgroup of $A(\BQbar )^{|X|}$.

Lemma~\ref{l:stand2} allows to consider $K_{\Gamma}(X,\BQbar_{\ell},i)$ as a subgroup of 
$A(\BQbar )^{|X|}$. The next statement immediately follows from Theorem~\ref{t:my_result} and
Conjecture~\ref{c:Deligne}(a) proved by Deligne~\cite{De2}.

\begin{cor}   \label{c:K_0}
Suppose that for each $x\in |X|$ all elements of $\Gamma_x$ are units outside of $p$.
Then the subgroup $K_{\Gamma}(X,\BQbar_{\ell},i)\subset A(\BQbar )^{|X|}$ does not depend
on the choice of $\ell$, $\BQbar_{\ell}$, and $i:\BQbar\mono\BQbar_{\ell}$. \hfill\qedsymbol
\end{cor}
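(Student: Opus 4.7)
The plan is to prove $K_{\Gamma}(X,\BQbar_{\ell},i)\subseteq K_{\Gamma}(X,\BQbar_{\ell'},i')$; equality then follows by symmetry. Given $[\F]\in K_{\Gamma}(X,\BQbar_{\ell},i)$, I will exhibit an element of $K_{\Gamma}(X,\BQbar_{\ell'},i')$ with the same image $f_{\F}\in A(\BQbar)^{|X|}$.

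First, I reduce to an irreducible lisse situation by devissage. Stratifying $X$ into smooth locally closed subschemes $X_{\alpha}$ on which $\F$ is lisse, iterated excision gives $[\F]=\sum_{\alpha}[j_{\alpha!}(\F|_{X_{\alpha}})]$ in $K_{\Gamma}(X,\BQbar_{\ell},i)$, where $j_{\alpha}:X_{\alpha}\hookrightarrow X$ and each summand is verified to lie in $\Sh_{\Gamma}$ by inspecting stalks. Replacing each restriction $\F|_{X_{\alpha}}$ by its semisimplification (which has the same class in the Grothendieck group) and decomposing into Jordan--H\"older constituents, it suffices to treat an irreducible lisse sheaf $\F$ on a smooth connected $Y$.

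Second, I cut down to a number field using Deligne's theorem. Part~(a) of Conjecture~\ref{c:Deligne}, proved in~\cite{De2}, says that when $\det\F$ has finite order, the coefficients of all the polynomials $\det(1-F_yt,\F)$ lie in a common number field $E\subset\BQbar$; the general case reduces to this by the standard twisting trick, replacing $\F$ by $\F\otimes\chi^{-1}$ for a suitable rank-one arithmetic character $\chi$ and absorbing the finitely many algebraic values of $\chi$ into $E$.

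Third, I apply Theorem~\ref{t:my_result}. Let $\lambda$ and $\lambda'$ be the places of $E$ determined by $i|_{E}$ and $i'|_{E}$; both are prime to $p$. Under the identification $\overline{E}_{\lambda}=\BQbar_{\ell}$ extending $i|_{E}$, the sheaf $\F$ is a lisse $\overline{E}_{\lambda}$-sheaf whose Frobenius characteristic polynomials have coefficients in $E$ and roots in $\Gamma_{y}\subset\BQbar^{\times}$; in particular these roots are $\lambda'$-adic units since $\ell'\neq p$. Theorem~\ref{t:my_result} (with $\lambda$ and $\lambda'$ swapped relative to its statement) then produces a compatible lisse $\overline{E}_{\lambda'}$-sheaf $\F'$ on $Y$. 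Viewed as a $\BQbar_{\ell'}$-sheaf via $\overline{E}_{\lambda'}=\BQbar_{\ell'}$ extending $i'|_{E}$, this $\F'$ automatically lies in $\Sh_{\Gamma}(Y,\BQbar_{\ell'},i')$, and $f_{\F'}=f_{\F}$, because $\F$ and $\F'$ have identical characteristic polynomials as elements of $E[t]$. Pushing forward by $j_{\alpha}$ and summing over strata yields the required element. The main obstacle is the twist to finite-order determinant needed to invoke Deligne's theorem (which tacitly uses that $\det\F|_{\pi_{1}^{\mathrm{geom}}(Y)}$ is of finite order for an irreducible $\F$); past this point the argument is just devissage plus two black boxes.
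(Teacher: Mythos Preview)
Your argument is correct and coincides with the paper's, which simply records that the corollary follows immediately from Theorem~\ref{t:my_result} together with Deligne's proof of Conjecture~\ref{c:Deligne}(a). You have merely made explicit the standard d\'evissage (stratify, pass to irreducible lisse constituents on smooth strata) and the twist to finite-order determinant that are implicit in that one-line justification.
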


%\begin{proof}
%This immediately follows from Theorem~\ref{t:my_result} and
%Conjecture~\ref{c:Deligne}(a) proved by Deligne~\cite{De2}.
%\end{proof}

In the situation of Corollary~\ref{c:K_0} we will write simply $K_{\Gamma}(X,\BQbar )$
instead of $K_{\Gamma}(X,\BQbar_{\ell},i)$.

\subsubsection{Weakly motivic $\BQbar_{\ell}$-sheaves and their Grothendieck group}
\label{sss:mot2}

%The following choice of $\Gamma$ seems most natural to us.
Let us consider two particular choices of the map \eqref{e:Gamma}.

\begin{defin}   \label{d:mixmot}
For $x\in |X|$ let $\Gamma_x^{\mix}\subset\BQbar^{\times}$ be the set of
 numbers $\alpha\in\BQbar^{\times}$ with the following property: there exists 
 $n\in\BZ$ such that all complex absolute values of $\alpha$ equal $q_x^{n/2}$, where
 $q_x$ is the order of the residue field of $x$. Let $\Gamma_x^{\mot}$ be the set of those numbers
 from $\Gamma_x^{\mix}$ that are units outside of $p$.
\end{defin}

In other words, $\Gamma_x^{\mot}$ is the \emph{group of  Weil numbers} with respect to $q_x$.

\medskip

Since $\Gamma_x^{\mix}$ is stable under $\Gal (\BQbar/\BQ )$ the categories
$\Sh_{\Gamma^{\mot}} (X,\BQbar_{\ell},i)$ and $\sD_{\Gamma^{\mot}} (X,\BQbar_{\ell},i)$ do not depend on the choice of $i:\BQbar\mono\BQbar_{\ell}$. We denote them by 
$\Sh_{\mot} (X,\BQbar_{\ell})$ and $\sD_{\mot} (X,\BQbar_{\ell})$. We also have similar categories  $\Sh_{\mix} (X,\BQbar_{\ell})$ and $\sD_{\mix} (X,\BQbar_{\ell})$. 

\begin{defin}  \label{d:motivic}
Objects of  $\Sh_{\mot} (X,\BQbar_{\ell})$ (resp. $\sD_{\mot} (X,\BQbar_{\ell}))$
are called \emph{weakly motivic $\BQbar_{\ell}$-sheaves} (resp. \emph{weakly motivic $\BQbar_{\ell}$-complexes).}
\end{defin}

\begin{rems}   \label{r:mixed}
\begin{enumerate}
\item[(i)] A result of L.~Lafforgue \cite[Corollary VII.8]{La} implies that $\Sh_{\mix} (X,\BQbar_{\ell})$
is equal to the category of {\it mixed $\BQbar_{\ell}$-sheaves\,} introduced by 
Deligne \cite[Definition 1.2.2]{De}. Therefore $\sD_{\mix}(X,\BQbar_{\ell})$ is equal to the category
of {\it mixed $\BQbar_{\ell}$-complexes\,} from \cite[\S6.2.2]{De}.

\item[(ii)] According to \cite{De} and \cite{BBD}, the category $\sD_{\mix}(X,\BQbar_{\ell})$ is
stable under all ``natural" functors (e.g., under Grothendieck's ``six operations" ). The same is true for $\sD_{\mot}(X,\BQbar_{\ell})$, see Appendix~\ref{s:AppendixB}.

\item[(iii)] Any indecomposable object of $\sD (X,\BQbar_{\ell})$ is a tensor product of
an object of $\sD_{\mot}(X,\BQbar_{\ell})$ and an invertible $\BQbar_{\ell}$-sheaf on
$\Spec\BF_p$ (see Theorem~\ref{t:bigoplus} from Appendix~\ref{s:AppendixB}).
\end{enumerate}
\end{rems}

Corollary~\ref{c:K_0} is applicable to $\Gamma_x^{\mot}$ (but not to $\Gamma_x^{\mix}$).
So we have a well defined group 
\[
K_{\mot}(X,\BQbar ):=K_{\Gamma^{\mot}}(X,\BQbar )\, .
\]

\begin{rem}   \label{r:CM}
Let $C$ denote the union of all CM-subfields of $\BQbar$.
For any prime power $q$, the subfield of $\BQbar$ generated by all Weil numbers with respect to $q$ equals $C$ (see Theorem~\ref{t:CM} from  Appendix~\ref{s:CM}). So for any $X\ne\emptyset$, the kernel of the action of $\Gal (\BQbar/\BQ )$ on $K_{\mot}(X,\BQbar )$ equals 
$\Gal (\BQbar/C)$.
%The kernel of the action of $\Gal (\BQbar/\BQ )$ on $K_{\mot}(X,\BQbar )$ equals
%$\Gal (\BQbar/C)$, where $C$ is the union of all CM-subfields of $\BQbar$. Indeed, for any 
%prime power $q$, the subfield of $\BQbar$ generated by all Weil numbers with respect to $q$ is 
%known to be equal to $C$. {\bf Proof in the easy direction:} \cite[Proposition~4]{Ho}, \cite{GO}.
\end{rem}

\subsubsection{Functoriality of $K_{\mot}(X,\BQbar )$} \label{sss:mot3}
By Remark~\ref{r:mixed}(ii), the ``six operations" preserve the class of
weakly motivic $\BQbar_{\ell}$-complexes. So it is clear that once you fix a prime $\ell\ne p$, 
an algebraic closure $\BQbar_{\ell}\supset\BQ_{\ell}$, 
and an embedding $i:\BQbar\mono\BQbar_{\ell}$, you get an action of the 
``six operations" on $K_{\mot}$.
O.~Gabber proved~\cite[Theorem 2]{Fuj} that in fact, \emph{the action of the
``six operations" on $K_{\mot}$ does not depend on the
choice of $\ell$, $\BQbar_{\ell}$, and $i$.} 
%the ``six operations" are well defined on 
%$K_{\mot}$ (i.e., they do not depend on the choice of $\ell$, $\BQbar_{\ell}$, and 
%$i:\BQbar\mono\BQbar_{\ell}$). 
By virtue of Theorem~\ref{t:my_result} and
Conjecture~\ref{c:Deligne}(a) proved by Deligne, another result of
Gabber ~\cite[Theorem 3]{Fuj} can be reformulated as follows: \emph{the basis of 
$K_{\mot} (X,\BQbar )$ formed by the classes of irreducible perverse sheaves is independent of
$\ell$, $\BQbar_{\ell}$, and $i$.}

\subsection{Structure of the article}    \label{ss:structure}
In \S\ref{ss:LS}-\ref{ss:main technical} we formulate Theorem~\ref{Mainth}, which is the main technical 
result of this article. It gives a criterion for the existence of a lisse $E_{\lambda}$-sheaf on a
regular scheme $X$ of finite type over $\BZ [\ell ^{-1}]$ with prescribed polynomials
$\det (1-F_xt, \E)$, $x\in X$; the criterion is formulated in terms of 1-dimensional subschemes of
$X$. In \S\ref{ss:implies} we deduce Theorem~\ref{t:my_result} from Theorem~\ref{Mainth}. 

In \S\ref{ss:steps} we formulate three propositions which imply Theorem~\ref{Mainth}.
They are proved in \S\ref{s:proof1}-\ref{s:proofc}. \

Following Wiesend \cite{W1}, we use the Hilbert irreducibility theorem as the main technical
tool. A variant of this theorem convenient for our purposes is formulated in \S\ref{ss:Hilbert} 
(see Theorem~ \ref{t:Hilbert}) and proved in Appendix~\ref{s:Hilbert}.
In the case of schemes over $\BF_p$ one can use Bertini theorems instead of Hilbert irreducibility (under a tameness assumption, this is explained in \S\ref{ss:char_p} and Appendix~\ref{s:Bertini-Poonen}).

In \S\ref{s:counterex} we give counterexamples showing that in
Theorems~\ref{Mainth}  and \ref{t:Hilbert} the regularity assumption cannot be replaced by normality.

In Appendix~\ref{s:AppendixB} we show that the category of weakly motivic $\BQbar_{\ell}$-sheaves,
$\sD_{\mot}(X,\BQbar_{\ell})$, defined in \S\ref{sss:mot2} is stable under all ``natural" functors.

In Appendix~\ref{s:Bertini-Poonen} we discuss the Bertini theorem and Poonen's 
``Bertini theorem over finite fields".

In Appendix~ \ref{s:CM} we justify Remark~\ref{r:CM} by proving that the union of all CM fields is generated by Weil numbers.

\bigskip
I thank P.~ Deligne for sending me his letter (March 5, 2007) with a proof of 
Conjecture~\ref{c:Deligne}(a). I also thank A.~Beilinson,  B.~Conrad, H.~Esnault, O.~Gabber,  D.~Kazhdan, M.~Kerz, and M.~Kisin for useful discussions, advice, and remarks. In particular, Beilinson communicated to me the counterexample from \S\ref{ss:Sasha} and Kerz communicated to me a simple proof of Proposition~\ref{p:b}.

\section{Formulation of the main technical theorem}   \label{s:formulation}

Fix a prime $\ell $ and a finite extension $E_{\lambda}\supset\dbQ_{\ell}$.
Let $O\subset E_{\lambda}$ denote its ring of integers.

\subsection{The sets $\LS_r (X)$ and $\widetilde{\LS}_r(X)$}   \label{ss:LS}
\subsubsection{The set\, $\LS_r (X)$}   \label{sss:LS}
Let $X$ be a scheme  of finite type over $\BZ [\ell^{-1}]$. % and $|X|$ the set of closed points of $X$.
Say that  lisse $E_{\lambda}$-sheaves on $X$ are \emph{equivalent} if they have isomorphic semisimplifications. Let $\LS_r (X)=\LS_r^{E_{\lambda}} (X)$ be the set of equivalence classes of
lisse $E_{\lambda}$-sheaves on $X$ of rank $r$. Clearly $\LS_r (X)$ is a contravariant functor
in $X$.

\begin{example}    \label{ex:point}
Suppose that $X$ has a single point $x$. If $\E$ is an $E_{\lambda}$-sheaf on $X$ of rank $r$ then
$\det (1-F_xt,\E)$ is a polynomial in $t$ of the form
\begin{equation} \label{Polr}
1+c_1t+\ldots +c_rt^r, \quad  c_i\in O, \; c_r\in O^{\times}.
\end{equation}
Let $P_r(O)$ be the set of all polynomials of the from \eqref{Polr}. Thus we get a bijection
$\LS_r (X)\iso P_r(O)$.
% {\bf Warning: if $f:X'\to X$ is a morphism with $X=\Spec \BF_q$, 
%$X'=\Spec \BF_{q'}$, $q'\ne q$, then the map $P_r(O)\to P_r(O)$  corresponding to
%$f^*:\LS_r (X)\to\LS_r (X')$ is \emph{not} equal to the identity.}
\end{example}

\subsubsection{The sets $|X|$ and $||X||$} Let $X$ be a scheme  of finite type over $\BZ$.
We write $|X|$ for the set of closed points of $X$.
Let $||X||$ denote the set of isomorphism classes of pairs consisting of a finite field $\BF$ and a morphism $\alpha :\Spec\BF\to X$. Associating to such $\alpha$ the point $\alpha (\Spec\BF)\in |X|$
one gets a canonical map  $||X||\to |X|$. The multiplicative monoid of positive integers, 
$\BN$, acts on $||X||$ (namely, $n\in\BN$ acts by replacing a finite field $\BF$ with its extension of degree $n$). Both $||X||$ and $|X|$ depend functorially on $X$. The map $||X||\to |X|$ and the action of
$\BN$ on $||X||$ are functorial in $X$.

\begin{rem}   \label{r:nonfunctorial}
One %also 
has a canonical embedding $|X|\mono ||X||$ (given $x\in |X|$ take $\BF$ to be the residue field of $x$ and take $\alpha :\Spec\BF\to X$ to be the canonical embedding). Combining the 
embedding $|X|\mono ||X||$ with the action of $\BN$ on $||X||$ we get a bijection 
$\BN\times |X|\iso ||X||$. Note that the embedding $|X|\mono ||X||$ and the bijection 
$\BN\times |X|\iso ||X||$ are {\em not functorial\,} in $X$.
\end{rem}

\subsubsection{The set\, $\widetilde{\LS}_r(X)$}   \label{sss:tildeLS}
The set $P_r (O)$ from Example~\ref{ex:point} can be thought of
%We can think of $P_r (O)$ 
as the set of $O$-points of a scheme $P_r$ (which is isomorphic to
$(\BG_a)^{r-1}\times\BG_m$). The morphism  $(\BG_m )^r\to P_r$ that takes 
$(\beta_1 ,\ldots,\beta_r )$ to the polynomial $\prod\limits_{i=1}^r (1-\beta_i t)$ induces an isomorphism $(\BG_m )^r/S_r\iso P_r$, where $S_r$ is the symmetric group.
The ring homomorphism $\BZ\to\End ((\BG_m )^r)$ defines an
action of the multiplicative monoid $\BN$ on $(\BG_m )^r$ and therefore an action of
$\BN$ on $P_r$. Now let $X$ be a scheme  of finite type over $\BZ$.

\begin{defin}    \label{d:tildeLS}
$\widetilde{\LS}_r (X)$ (or $\widetilde{\LS}_r^{E_{\lambda}} (X)$)
 is the set of $\BN$-equivaraint maps $||X||\to P_r (O)$.
\end{defin}
%There is a canonical bijection between $\widetilde{\LS}_r (X)$ and the set of 
%$\BN$-equivaraint maps $||X||\to P_r (O)$. {\bf Describe it!} This bijection is functorial in $X$.

\begin{rem}    \label{r:alt_tildeLS}
By Remark~\ref{r:nonfunctorial}, restricting an  $\BN$-equivaraint map $||X||\to P_r (O)$ to
the subset $|X|\subset ||X||$ one gets a bijection
\begin{equation}    \label{e:nonfunctorial}
\widetilde{\LS}_r (X)\iso \{ \mbox{Maps from } |X| \mbox { to } P_r (O) \} .
\end{equation}
It is not functorial in $X$ if the structure of functor on the r.h.s. of
\eqref{e:nonfunctorial} is introduced naively. Nevertheless, we will use \eqref{e:nonfunctorial} 
in order to write elements of $\widetilde{\LS}_r (X)$ as maps $|X|\to  P_r (O)$. The value of
$f\in\widetilde{\LS}_r (X)$ at $x\in |X|$ will be denoted by $f(x)$ or $f_x$ (the latter allows to write
the corresponding polynomial \eqref{Polr} as $f_x(t)\,$).
\end{rem}

\subsubsection{The map\, $\LS_r(X)\to\widetilde{\LS}_r(X)$}   \label{sss:tildeLStoLS}
Let $X$ be a scheme  of finite type over $\BZ [\ell^{-1}]$.
%Each $f\in\LS_r(X)$ defines an $\BN$-equivaraint map $\tilde f: ||X||\to P_r (O)$: 
%namely, if $\BF$ is a finite field equipped with a morphism $\alpha :\Spec\BF\to X$ then
%$\tilde f (\BF ,\alpha )\in P_r (O)$ is the image of $\alpha^*f$ under the bijection \eqref{?}.
%Thus we get a map\, $\LS_r(X)\to\widetilde{\LS}_r(X)$ functorial in $X$.
A lisse $E_{\lambda}$-sheaf $\E$ on $X$ defines an $\BN$-equivaraint map $f_{\E}: ||X||\to P_r (O)$: 
namely, if $\BF$ is a finite field equipped with a morphism $\alpha :\Spec\BF\to X$ then
 $f_{\E} (\BF ,\alpha )\in P_r (O)$ is the characteristic polynomial of the geometric Frobenius
 with respect to $\BF$ acting on the stalk of $\alpha^*\E$.
Thus we get a map\, $\LS_r(X)\to\widetilde{\LS}_r(X)$ functorial in $X$.

%For any scheme $X$ of finite type over $\BZ [\ell^{-1}]$ we define a map
%we set 
%\[
%\widetilde{\LS}_r(X):=\prod_{x\in |X|}\LS_r (x)
%\]
%(when we write $\LS_r (x)$ we think of $x$ as a scheme $\{ x\}\subset X$). One has an obvious map
%\begin{equation} \label{LStotildeLS}
%\LS_r(X)\to\widetilde{\LS}_r(X).
%end{equation}

If $X_{\red}$ is normal this map is injective by the %generalized
\v {C}ebotarev density theorem (see \cite[Theorem 7]{S}). %, so we can identify 
In this case we do not distinguish an element of $\LS_r(X)$ from its image in 
$\widetilde{\LS}_r(X)$ and consider $\LS_r(X)$ as a subset of $\widetilde{\LS}_r(X)$.
%
%%%We think of an element $f\in\widetilde{\LS}_r(X)$ as a function whose value at $x\in |X|$ is an 
%%%element $f(x)\in\LS_r(x)$. 
%
%Clearly $\LS_r$ and $\widetilde{\LS}_r$ are contravariant functors in $X$, and the maps 
%\eqref{LStotildeLS} define a morphism of functors.

%\footnote{The ``right" structure of contravariant functor on $\widetilde{\LS}_r$ would be less clear if 
%one defines $\widetilde{\LS}_r$ (X) to be equal to $P_r(O)^{|X|}$.} 

%{\bf Should I formulate here the ``analytic continuation" prinicple, which follows from the 
%(generalized) \v {C}ebotarev density theorem? It says that if $U\subset X$ is a 
%dense open subset then the map $\LS_r(X)\to\LS_r (U)$ is injective.}

\subsection{Formulation of the theorem}  \label{ss:main technical} 

If $X$ is a separated curve over a field\footnote{According to \cite{W2,KS2}, there is a good notion of tameness in a much more general situation.} there is a well known notion of tame etale covering and therefore a notion of tame lisse $E_{\lambda}$-sheaf (``tame" means ``tamely ramified at infinity"). If two
lisse $E_{\lambda}$-sheaves are equivalent (i.e., have isomorphic  semisimplifications) and one of them is tame then so is the other.
%The class of tame lisse $E_{\lambda}$-sheaves is stable under equivalence. 
Let  $\LS_r^{\tame} (X)\subset\LS_r (X)$ be the subset of equivalence classes 
of tame lisse $E_{\lambda}$-sheaves. 

%Suppose that $X$ is a separated smooth curve and $\E\in\LS_r (X)$. Let $\bar X$ be the
%smooth compactification of $X$. For each $x\in\bar X$ let $k_x$ be the field of fractions of
%the completed local ring of $\bar X$ at $x$. Our $\E$ defines an $r$-dimensional representation
%of $G_x:=\Gal (k_x^s/k_x)$. Let $G_x^t\subset G_x$ be the ramification groups (we are using the
%upper numbering, so $t\in\dbQ$). Let $R_x (\E )$ be the smallest $t\ge 1$ such that $G_x^t$ acts
%trivially (where ?). If $\varphi_\in\Phi_r (X)$ set $R_x (\varphi ):=R_x (\E )$, where $\E\in\LS_r (X)$
%is such that $\varphi_{\E}=\varphi$. Note that $R_x (\E )$ depends only on the semisimplification
%of $\E$, so $R_x (\varphi )$ is well-defined. Clearly $R_x (\varphi )=1$ if $x\in X$. Let $R(\varphi )$
%be the maximal value of $R_x (\varphi )$, $x\in\bar X$.
%
%Now let $X$ be {\em any\,} curve and $\varphi_\in\Phi_r (X)$. Then set $R(\varphi ):=R(\varphi |_U)$,
%where $U\subset X_{\red}$ is a dense smooth open subset which is a separated scheme
%(here  $\varphi |_U$ is the image of $\varphi$ in $\Phi_r (U)$). Clearly $R(\varphi )$ is well-defined.

By an {\em arithmetic curve\,} we mean a scheme of finite type over $\BZ$ of pure dimension $1$.

\begin{theorem}   \label{Mainth}
Let $X$ be a regular scheme of finite type over $\BZ [\ell^{-1}]$. An element 
$f\in\widetilde{\LS}_r(X)$ belongs to $\LS_r(X)$ if and only if it satisfies the following conditions:

\begin{enumerate}
\item[(i)] for every regular arithmetic curve $C$ and every morphism 
$\varphi :C\to X$ one has $\varphi^*(f)\in\LS_r(C)$;
\item[(ii)]  there exists a dominant etale morphism $X'\to X$ 
such that for every smooth separated curve $C$ over a finite field and every morphism 
$C\to X'$ the image of $f$ in $\widetilde{\LS}_r (C)$ belongs to $\LS_r^{\tame} (C)$.
\end{enumerate}
\end{theorem}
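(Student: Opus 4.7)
Necessity is tautological: a lisse $E_\lambda$-sheaf on $X$ restricts to lisse sheaves on every arithmetic curve and is itself tame after pullback by the identity, so one may take $X'=X$ in~(ii). For sufficiency, since $X$ is regular, the \v{C}ebotarev density theorem gives $\LS_r(X) \hookrightarrow \widetilde{\LS}_r(X)$, so it suffices to \emph{construct} a lisse $E_\lambda$-sheaf realising $f$---equivalently, after fixing a geometric base point, a continuous semisimple representation $\rho : \pi_1(X,\bar x) \to GL_r(E_\lambda)$ whose Frobenius characteristic polynomials match $f$.

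The plan is Wiesend-style. First, condition~(i) supplies, for every regular arithmetic curve $\varphi : C \to X$, a canonical semisimple lisse $E_\lambda$-sheaf $\E_C$ compatible with $\varphi^*(f)$; it is unique by \v{C}ebotarev on $C_{\red}$, so the collection $\{\E_C\}$ is automatic and behaves functorially under morphisms of curves over $X$. Second, I would apply the Hilbert irreducibility Theorem~\ref{t:Hilbert} to show that for any finite étale cover $Y \to U$ of a dense open $U \subset X$, there exist regular arithmetic curves $C \subset X$ hitting $U$ along which $Y$ pulls back with prescribed splitting behaviour. This forces the family of curve-monodromies to detect enough of $\pi_1(X)$ to pin down a representation up to semisimplification.

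The main obstacle is the third step: assembling the family $\{\E_C\}$ into a single continuous finite-dimensional representation of $\pi_1(X)$. The group $\pi_1(X)$ is far too large to control directly---wild ramification along any compactification contributes enormous pro-$p$ pieces---and it is precisely here that hypothesis~(ii) is indispensable. After replacing $X$ by $X'$, tameness on every smooth curve in $X'$ over a finite field forces the putative monodromy to factor through a tame quotient of $\pi_1(X')$ with bounded ramification along a chosen compactification, a quotient with much better finiteness properties. On this manageable quotient the $\E_C$, together with the ample supply of curves from Hilbert irreducibility, determine $\rho$ uniquely; Galois descent from $X'$ down to $X$ then yields the desired sheaf on $X$. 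I expect the three propositions promised in \S\ref{ss:steps} to correspond, roughly, to (a) a reduction to constructing the representation on such a tame, bounded-ramification quotient after passing to $X'$, (b) the extraction of sufficiently many test curves via Theorem~\ref{t:Hilbert}, and (c) the actual gluing of the $\E_C$ into $\rho$ using these curves.
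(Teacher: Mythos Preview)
Your necessity argument has a genuine error. A lisse $E_\lambda$-sheaf on $X$ need not pull back tamely to every curve in $X$: already for $X=\BA^1_{\BF_p}$ an Artin--Schreier sheaf is wildly ramified at infinity. So taking $X'=X$ does not verify~(ii). The paper instead chooses $X'$ to be an \'etale cover on which $\E/\ell\E$ trivializes; then the monodromy of $\E|_{X'}$ lands in $\Ker\bigl(GL(r,O)\to GL(r,O/\ell O)\bigr)$, which is pro-$\ell$ and hence meets every wild (pro-$p$, $p\ne\ell$) inertia group trivially. Condition~(ii) is not tautological; a specific $X'$ is required.

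For sufficiency, your sketch has the right flavour but the wrong architecture in two places. First, the paper does \emph{not} descend from $X'$ to $X$; $X'\to X$ need not be Galois, and you have no descent datum. Instead, condition~(ii) is used (Proposition~\ref{p:a}) to produce a dense open $U\subset X$ on which $f$ lies in an intermediate set $\LS'_r(U)$, defined via a closed normal $H\lhd\pi_1(U)$ with $\pi_1(U)/H$ \emph{almost pro-$\ell$} (Definition~\ref{d:amost-pro-ell}) plus a congruence-triviality condition. The key step (Proposition~\ref{p:b}) is $\LS'_r(U)=\LS_r(U)$: Hilbert irreducibility in the almost pro-$\ell$ form (Proposition~\ref{p:Hilbert-pro-l}) yields a curve $C\to U$ with $\pi_1(C)\twoheadrightarrow\pi_1(U)/H$, and one lifts the curve representation $\rho_C$ through this surjection; topological finite generation of $\pi_1(U)/H$ (not a ``tame quotient along a compactification'' as you suggest) is what makes a Faltings-type or compactness argument go through. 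Second, the passage from $U$ back to $X$ (Proposition~\ref{p:c}) is not descent but \emph{extension across $X\setminus U$}: Zariski--Nagata purity reduces the obstruction to ramification along a divisor, and a specialization lemma plus condition~(i) on curves through points of $X\setminus U$ with prescribed tangent direction shows there is none. Your guessed trichotomy (a)--(c) does not match the actual Propositions~\ref{p:a}--\ref{p:c}.
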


\begin{rems}  \label{r:Sasha's counterex}
\begin{enumerate}
\item[(i)] In Theorem~\ref{Mainth} the regularity assumption on $X$ cannot be replaced by normality
(in \S\ref{s:counterex} we give two counterexamples, in which $X$ is a surface over a finite field).
The regularity assumption allows us to use the Zariski-Nagata purity theorem in the proof of
Corollary~\ref{c:specialization} and to apply Theorem~\ref{t:Hilbert}.

\item[(ii)]  I do not know if the regularity assumption in Theorem~\ref{t:my_result}  can be replaced by normality.

\item[(iii)] The sets $\LS_r (X)$ and $\widetilde{\LS}_r(X)$ were defined in \S\ref{ss:LS}
for a fixed finite extension $E_{\lambda}\supset\dbQ_{\ell}$, so 
$\LS_r (X)=\LS_r ^{E_{\lambda}}(X)$, $\widetilde{\LS}_r (X)=\widetilde{\LS}_r ^{E_{\lambda}}(X)$.
Replacing $E_{\lambda}$ by $\dbQbar_{\ell}$ in these definitions one gets bigger sets, denoted by $\LLS_r (X)$ and $\widetilde{\LLS}_r (X)$. {\em If $X$ is a smooth scheme over $\BF_p$ then
Theorem~\ref{Mainth} remains valid for $\LLS_r (X)$ and $\widetilde{\LLS}_r (X)$ instead of 
$\LS_r (X)$ and $\widetilde{\LS}_r(X)$.} This follows from Theorem~\ref{Mainth} as stated above combined with \cite[Remark~3.10]{De2} and Lemma~\ref{l:Brauer} below (the remark and
the lemma ensure that an element of $\widetilde{\LLS}_r (X)$ satisfying conditions (i-ii) from
Theorem~\ref{Mainth} belongs to $\widetilde{\LS}_r ^{E_{\lambda}}(X)$ for some subfield
$E_{\lambda}\subset\dbQbar_{\ell}$ finite over $\dbQ_{\ell}$).
%(one needs  \cite[Remark~3.10]{De2} because 
%$\widetilde{\LLS}_r (X)$ is \emph{bigger} than the union of the sets 
%$\widetilde{\LS}_r ^{E_{\lambda}}(X)$ corresponding to subfields  
%$E_{\lambda}\subset\dbQbar_{\ell}$ finite over $\dbQ_{\ell}$).

\item[(iv)] Theorem~\ref{Mainth} and its proof remain valid for regular \emph{algebraic spaces}
of  finite type over $\BZ [\ell^{-1}]$.
\end{enumerate}
\end{rems}

\subsection{Theorem~\ref{Mainth} implies Theorem~\ref{t:my_result}}   \label{ss:implies}
\begin{lem}   \label{l:Brauer}
Let $G$ be a group. Let $\rho$ be a semisimple representation of $G$ over 
$\overline{E}_{\lambda}$ of dimension $r<\infty$ whose character is defined over $E_{\lambda}$.
Let $F\subset\overline{E}_{\lambda}$ be any extension of $E_{\lambda}$ such that 
$[F:E_{\lambda}]$ is divisible by $r$, $r-1$,\ldots, $2$. Then $\rho$ can be defined over 
$F$.
\end{lem}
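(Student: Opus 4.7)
The plan is to decompose $\rho$ along Galois orbits of its irreducible constituents, reduce to descending a single Galois-orbit sum, and then use Brauer theory over the local field $E_\lambda$ to see that $[F:E_\lambda]$ divisible by $\mathrm{lcm}(2,\ldots,r)$ suffices to split the relevant central simple algebras. First, write $\rho \cong \bigoplus_\alpha m_\alpha \pi_\alpha$, where $\pi_\alpha := \bigoplus_{i \in \mathcal{O}_\alpha} \rho_i$ is the sum over a single $\Gal(\overline{E}_\lambda/E_\lambda)$-orbit $\mathcal{O}_\alpha$ of pairwise non-isomorphic irreducibles; the multiplicities $m_\alpha$ are necessarily constant on each orbit by linear independence of irreducible characters combined with $\chi_\rho \in E_\lambda$. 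Each $\pi_\alpha$ then has character in $E_\lambda$, so it suffices to descend each $\pi_\alpha$ to $F$.

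Fix $\pi := \pi_\alpha$, $d := |\mathcal{O}_\alpha|$, a representative $\rho_0$ of dimension $r'$, and the character field $K_0 \subset \overline{E}_\lambda$ of $\rho_0$, so $[K_0:E_\lambda] = d$. By Jacobson density, the image of $\overline{E}_\lambda[G]$ in $\End_{\overline{E}_\lambda}(V_\pi)$ equals $B := \prod_{\sigma \in \mathcal{O}_\alpha} M_{r'}(\overline{E}_\lambda)$; standard Galois descent, applied thanks to $\chi_\pi \in E_\lambda$, identifies the image $A$ of $E_\lambda[G]$ with an $E_\lambda$-form of $B$ whose center is $K_0$. Thus $A$ is a central simple $K_0$-algebra of degree $r'$, of some Schur index $s \mid r'$.

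Now extend scalars to $F$. Write $K_0 \otimes_{E_\lambda} F = \prod_j L_j$, where $L_j = F \cdot \sigma_j(K_0) \subset \overline{E}_\lambda$ indexes the $\Gal(\overline{E}_\lambda/F)$-orbits on embeddings $K_0 \mono \overline{E}_\lambda$. Then $A \otimes_{E_\lambda} F = \prod_j (A \otimes_{K_0} L_j)$, with each factor a central simple $L_j$-algebra of degree $r'$. Because $E_\lambda$ is a nonarchimedean local field, the base-change map $\Br(K_0) \to \Br(L_j)$ is multiplication by $[L_j:K_0]$ on invariants, so $A \otimes_{K_0} L_j$ splits as soon as $s \mid [L_j:K_0]$. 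The key arithmetic point is that $sd \le r'd = \dim \pi \le r$, so $sd \in \{1,\ldots,r\}$; by hypothesis $sd \mid [F:E_\lambda]$, hence $s \mid [F:E_\lambda]/d$, which in turn divides $[L_j:K_0] = [L_j:F]\cdot[F:E_\lambda]/d$ for every $j$. Every factor therefore splits to $M_{r'}(L_j)$, and assembling the unique simple module of each factor produces a module $W$ over $A \otimes_{E_\lambda} F$ of $F$-dimension $\sum_j r'[L_j:F] = r'd$; tracking the decompositions shows $W \otimes_F \overline{E}_\lambda \cong \pi$ as $G$-representations. The main obstacle is the arithmetic alignment of $sd \le r$ with the divisibility hypothesis $\mathrm{lcm}(2,\ldots,r) \mid [F:E_\lambda]$---precisely what forces every local Brauer obstruction to vanish simultaneously---the rest being formal Galois descent and bicommutant arguments.
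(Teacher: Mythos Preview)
Your proof is correct and follows essentially the same route as the paper's: reduce to a single $\Gal(\overline{E}_\lambda/E_\lambda)$-orbit of irreducible constituents, identify the descent obstruction as a Brauer class over the character field $K_0$ (the paper's $K$), and kill it over each composite $L_j = K_0F$ (the paper's $KF$) using that restriction in $\Br$ of a local field is multiplication by the degree, together with the divisibility hypothesis on $[F:E_\lambda]$. The only cosmetic differences are that you work with the Schur index $s\le r'$ rather than $r'$ itself and bypass the paper's preliminary reduction to $\dim\pi=r$ by using $sd\le\dim\pi\le r$ directly; your final assembly of the simple modules of $A\otimes_{E_\lambda}F$ is exactly the restriction-of-scalars step the paper performs with $V\otimes_{F}\overline{E}_\lambda$.
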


\begin{proof}
We can assume that $\rho$ cannot be decomposed into a direct sum of representations of
dimension $<r$ whose characters are defined over $E_{\lambda}$. Then 
$\rho =\bigoplus\limits_{i\in I}\rho_i$, where the $
\rho_i$'s are irreducible, $\rho_i\not\simeq\rho_j$ for $i\ne j$, and the action of
$\Gal (\overline{E}_{\lambda}/E_{\lambda})$ on $I$ is transitive. Clearly $\dim \rho_i=r':=r/d$,
where $d=\Card (I)$. Let us show that for every $\Gal (\overline{E}_{\lambda}/F)$-orbit 
$\cO\subset I$, the representation $\rho_{\cO} =\bigoplus\limits_{i\in\cO}\rho_i$ can be defined over~$F$. Fix $i_0\in\cO$, then the stabilizer of $i_0$ in 
$\Gal (\overline{E}_{\lambda}/E_{\lambda})$ equals $\Gal (\overline{E}_{\lambda}/K)$ for some
extension $K\supset E_{\lambda}$ of degree $d$. The character of $\rho_{i_0}$ is defined over
$K$. The obstruction to $\rho_{i_0}$ being defined over $K$ is an element $u\in\Br (K)$ with
$r'u=0$. We claim that
\begin{equation} \label{e:u_killed}
u\in\Ker (\Br (K)\to\Br (KF)),
\end{equation}
where $KF\subset \overline{E}_{\lambda}$ is the composite field. To prove this, it suffices to check that
\begin{equation} \label{e:divisibility}
r'\, |\, [KF:K]  \; .
\end{equation}
But $r\,|\, [F:E_{\lambda}]$ (by the assumption on $F$), so $r\,|\, [KF:E_{\lambda}]$, which is 
equivalent to \eqref{e:divisibility}. By \eqref{e:u_killed}, $\rho_{i_0}$ is defined over $KF$, i.e., 
$\rho_{i_0}\simeq V\otimes_{KF}\overline{E}_{\lambda}$, where $V$ is a representation of
$G$ over $KF$. Then $V\otimes_K\overline{E}_{\lambda}=
V\otimes_{KF}(KF\otimes_K\overline{E}_{\lambda})\simeq\rho_{\cO}$, so $\rho_{\cO}$ is
defined over $K$.
\end{proof}

Now let us deduce Theorem~\ref{t:my_result} from Theorem~\ref{Mainth}.
%\begin{proof}[Deducing Theorem~\ref{t:my_result} from Theorem~\ref{Mainth}]
Let $E$, $E_{\lambda}$, $E_{\lambda'}$, and $\E$ be as in Theorem~\ref{t:my_result}.
Then $\E$ defines an element $f\in\widetilde{\LS}_r^{E_{\lambda}}(X)$ and the problem is to show that $f\in\LS_r^F(X)$ for some finite extension $F\supset E_{\lambda}$. Let $F\supset E_{\lambda}$ be any extension of degree $r!$ . Let us show that $f\in\widetilde{\LS}_r^{F}(X)$ satisfies conditions 
(i)-(ii) of Theorem~\ref{Mainth}. Condition (i) follows from Theorem~\ref{t:my_result} for curves (proved by L.~Lafforgue) combined with Lemma~\ref{l:Brauer}.
%and the fact that each element of order $\le r$ in the Brauer group $\Br (E_{\lambda})$ 
%has zero image in $\Br (F)$. 
 Condition (ii) holds for $f$ viewed as an element of 
$\LS_r^{E_{\lambda'}}(X)$. To conclude that it holds for $f$ viewed as an element of 
$\widetilde{\LS}_r^{F}(X)$, use the following corollary of
 \cite[Theorem 9.8]{De0}: if $C$ is a smooth curve over a finite field, $\F'$ is a tame lisse 
 $E_{\lambda'}$-sheaf on $C$, and $\F$ is a lisse $F$-sheaf on $C$ compatible with
 $\F'$ then $\F$ is also tame. Now we can apply Theorem~\ref{Mainth} and get 
 Theorem~\ref{t:my_result}.
%\end{proof}

\subsection{Steps of the proof of Theorem~\ref{Mainth}} \label{ss:steps}
We follow Wiesend's work \cite{W1} (see also the related ar\-ticle~\cite{KS1}).  

The ``only if" statement of Theorem~\ref{Mainth} is easy. 
First of all, if $\E$ is a torsion-free lisse $O$-sheaf
of rank $r$ then the corresponding $f\in\LS_r (X)$ clearly satisfies (i).
Property (ii) also holds for $f$: choose $X'$ so that $\E/\ell\E$ is trivial and use the fact
that the kernel of the homomorphism $GL(r,O)\to GL(r,O/\ell O)$ is a pro-$\ell$-group, so
it cannot contain nontrivial pro-$p$-subgroups  for $p\ne\ell$.
%{\bf Should I replace here $\ell$ by the maximal ideal $\fm\subset O$?}

The ``if" statement of Theorem~\ref{Mainth} follows from Propositions~\ref{p:a}-\ref{p:c} formulated below. 

\begin{defin}  \label{d:amost-pro-ell}
A pro-finite group is said to be \emph{almost pro-$\ell$} if it has an open pro-$\ell$-subgroup.
\end{defin}

\begin{rem}  \label{r:amost-pro-ell}
In this case the open pro-$\ell$-subgroup can be chosen to be normal.
\end{rem}

\begin{defin}   \label{d:triviality_mod_I}
We say that $f\in\widetilde{\LS}_r(X)$ is {\em trivial modulo an ideal $I\subset O$\,}
if for every $x\in |X|$ the polynomial 
%\eqref{Polr} corresponding to $f(x)$ 
$f_x\in O[t]$ is congruent to $(1-t)^r$ modulo $I$. 
\end{defin}

%Associating to $X$ the set of elements of
%$\widetilde{\LS}_r(X)$ trivial modulo an ideal $I\subset O$ we get a subfunctor of 
%$\widetilde{\LS}_r$.
%{\bf Should I introduce some notation for this subfunctor (e.g., $\widetilde{\LS}^I_r$)?}

%For any scheme $X$ of finite type over $\BZ [\ell^{-1}]$ let us introduce the following
%subset $\LS'_r(X)\subset\widetilde{\LS}_r(X)$ containing the image of $\LS_r(X)$.

\begin{defin}   \label{d:LS'}
If $X$ is connected then $\LS'_r(X)$ is the set of all $f\in\widetilde{\LS}_r(X)$ satisfying condition (i) from
Theorem~\ref{Mainth} and the following one: there is a closed normal subgroup $H\subset \pi_1 (X)$ such that 
\begin{enumerate}
\item[(a)] $\pi_1 (X)/H$ is almost pro-$\ell$;
\item[(b)]  for every nonzero ideal $I\subset O$ there is an open subgroup $V\subset\pi_1 (X)$ containing $H$
such that the pullback of $f$ to $X_V$ is trivial modulo~$I$ (here $X_V$ is the connected covering of $X$
corresponding to $V$).
\end{enumerate}
If $X$ is disconnected then $\LS'_r(X)$ is the set of all $f\in\widetilde{\LS}_r(X)$ whose restriction to each
connected component $X_{\alpha}\subset X$ belongs to $\LS'_r(X_{\alpha})$.
\end{defin}

The image of $\LS_r(X)$ in $\widetilde{\LS}_r(X)$ is clearly contained in $\LS'_r(X)$.
%The subset $\LS'_r(X)\subset\widetilde{\LS}_r(X)$ clearly contains the image of $\LS_r(X)$.

\begin{prop} \label{p:a}
Let $X$ be a scheme of finite type over $\BZ [\ell^{-1}]$. Suppose that 
$f\in\widetilde{\LS}_r(X)$ satisfies conditions (i)-(ii) from Theorem~\ref{Mainth}.
Then there is a dense open $U\subset X$ such that  the image of $f$ in $\widetilde{\LS}_r(U)$
belongs to $\LS'_r(U)$.
\end{prop}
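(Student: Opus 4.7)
The plan is to construct, after shrinking $X$ to a dense open $U$, a closed normal subgroup $H\subset\pi_1(U)$ satisfying (a) and (b) of Definition~\ref{d:LS'}, by using the tameness hypothesis (ii) to build, for each nonzero ideal $I\subset O$, an open subgroup $V_I\subset\pi_1(U)$ killing $f$ modulo $I$, and then intersecting these.

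First I would reduce to $X$ connected (since Definition~\ref{d:LS'} is componentwise) and, using generic flatness, replace $X$ by a dense open over which the map $X'\to X$ of (ii) becomes finite etale. After a further shrinking, one may even assume $X'\to X$ is Galois. Call the resulting scheme $U$; by construction, (i) and (ii) transport to $U$, and tameness applies to every smooth curve mapping to the finite etale cover $U'\to U$.

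Next, fix a nonzero ideal $I\subset O$, and set $\bar O=O/I$, $G_I=GL(r,\bar O)$, a finite group. The goal is to produce an open normal $V_I\subset\pi_1(U)$ such that $f|_{U_{V_I}}$ is trivial modulo $I$. For each regular arithmetic curve $C\to U$, condition (i) gives a lisse $E_\lambda$-sheaf on $C$; its mod-$I$ reduction factors through a finite quotient $\bar\rho_C:\pi_1(C)\to G_I$, and the associated finite etale cover $C_I\to C$ trivialises $f$ mod $I$ on its pullback. The core step is to show these local reductions descend to a single global finite etale Galois cover $U_I\to U$: this is where the Hilbert irreducibility machinery of \S\ref{ss:Hilbert} (Theorem~\ref{t:Hilbert}) intervenes, supplying sufficiently many curves in $U$ so that the curve-wise $\bar\rho_C$ are consistent with a homomorphism from a finite Galois quotient of $\pi_1(U)$; the tameness in (ii) is precisely what controls the wild ramification so this descent is possible. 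We set $V_I$ equal to the open subgroup corresponding to the kernel of the resulting representation $\bar\rho:\pi_1(U)\to G_I$.

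Finally, put $H=\bigcap_I V_I$ (intersection over all nonzero ideals of $O$). Condition (b) of Definition~\ref{d:LS'} holds by construction. For (a), observe that $\pi_1(U)/V_I$ embeds into $G_I=GL(r,\bar O)$, and the kernel of $GL(r,O/I)\to GL(r,O/\lambda)$ is pro-$\ell$; hence the non-$\ell$ part of $\pi_1(U)/V_I$ is uniformly bounded by $|GL(r,O/\lambda)|$ independently of $I$, so the pro-finite quotient $\pi_1(U)/H\hookrightarrow\varprojlim_I\pi_1(U)/V_I$ has an open pro-$\ell$ subgroup, i.e.\ is almost pro-$\ell$. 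Along the way one may need to shrink $U$ again to avoid ramification loci of the covers $U_I\to U$, but a single shrink suffices because condition (ii) only needs to hold on a dense open.

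The main obstacle is the descent step: assembling the curve-level representations $\bar\rho_C$ into a global $\bar\rho:\pi_1(U)\to G_I$. This is exactly the point where Wiesend's use of Hilbert irreducibility (and the tameness required by (ii), which rules out wild inertia contributions to $G_I$-valued representations at residue characteristics $p\ne\ell$) is indispensable, and I expect this to be the technical heart of the proof in \S\ref{s:proof1}.
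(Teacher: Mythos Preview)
Your proposal has a genuine gap at exactly the point you flag as ``the main obstacle'': the descent of the curve-level reductions $\bar\rho_C:\pi_1(C)\to G_I$ to a global homomorphism $\bar\rho:\pi_1(U)\to G_I$. Hilbert irreducibility (Theorem~\ref{t:Hilbert}) does not help here. That theorem takes as \emph{input} an open normal subgroup $H\subset\pi_1(U)$ and produces a curve $C\to U$ with $\pi_1(C)\twoheadrightarrow\pi_1(U)/H$; it does not manufacture the subgroup $H$ (equivalently, the cover $U_I\to U$) from a compatible family of curve data. In fact, the paper uses Hilbert irreducibility only in the proofs of Propositions~\ref{p:b} and~\ref{p:c}, and there the required closed normal subgroup $H$ with $\pi_1(U)/H$ almost pro-$\ell$ is already supplied by the conclusion of Proposition~\ref{p:a}. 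Trying to invoke it inside the proof of Proposition~\ref{p:a} is circular: you would need the very $V_I$ you are trying to build in order to apply the theorem. Note also that over $\BF_p$ the group $\pi_1(U)$ is not topologically finitely generated, so one cannot simply take $H$ to be the intersection of all open subgroups of index at most $|G_I|$.

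The paper's argument in \S\ref{s:proof1} is entirely different and, as announced in \S\ref{ss:steps}, uses ``only standard facts about fundamental groups''. After reducing to $X$ normal irreducible, it proceeds by induction on $\dim X$ via an elementary fibration: shrink $X$ so that there is a smooth relative curve $X\to S$ compactifiable as $X=\bar X\setminus D$ with $D$ finite \'etale over $S$, and a section $\sigma:S\to X$ with $\sigma^*f\in\LS'_r(S)$ by induction. The covers $X_n\to X$ are then built \emph{globally} from two ingredients: the covers $S_n\to S$ coming from induction, and covers $Y_n\to X$ obtained by Weil restriction of the universal tame $G_n$-torsor over the moduli scheme $T_n\to S$ of such torsors (this is where \cite[XIII, 2.8--2.9]{SGA1} enters, and where tameness is used). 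The almost pro-$\ell$ property is checked via the semidirect product decomposition $\pi_1^t(X_\eta)\simeq\Gamma\ltimes K$ with $K=\pi_1^t(X_{\bar\eta})$ topologically finitely generated. No curve-by-curve descent, and no Hilbert irreducibility, is involved.
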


Proposition~\ref{p:a} will be proved in \S\ref{s:proof1} using only 
standard facts about  fundamental groups. The next statement is the key step of the proof
of Theorem~\ref{Mainth}.

\begin{prop} \label{p:b}
Let $X$ be a regular scheme of finite type over $\BZ [\ell^{-1}]$.
Then $\LS'_r(X)=\LS_r(X)$.  
\end{prop}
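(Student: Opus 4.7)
The \textbf{only if} direction is formal. Given a lisse $E_\lambda$-sheaf $\E$ corresponding to a continuous representation $\rho:\pi_1(X)\to GL(r,E_\lambda)$, I would choose a $\rho$-stable $O$-lattice (which exists since the image is compact). Set $H:=\ker\rho$; then $\pi_1(X)/H\hookrightarrow GL(r,O)$ is almost pro-$\ell$ because the kernel of $GL(r,O)\to GL(r,O/\ell O)$ is pro-$\ell$. For any nonzero ideal $I\subset O$, the subgroup $V_I:=\rho^{-1}(1+I\cdot M_r(O))$ is open and contains $H$, and on $X_{V_I}$ every Frobenius maps to $1+I\cdot M_r(O)$, so its characteristic polynomial is $\equiv(1-t)^r\pmod{I}$. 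Condition (i) of Theorem~\ref{Mainth} is automatic.

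For the \textbf{if} direction, my plan is to build, for each open normal subgroup $V\supset H$ arising from condition (b), a semisimple representation $\rho_V:\pi_1(X)/V\to GL(r,\overline{E}_\lambda)$ whose character on Frobenii matches $f$, and then take an inverse limit. The $V_I$ from Definition~\ref{d:LS'}(b) form a cofinal family among open subgroups of $\pi_1(X)$ containing $H$: since $\pi_1(X)/H$ is almost pro-$\ell$, any open subgroup $W\supset H$ contains an open normal subgroup whose quotient has order only divisible by primes in a fixed finite set, and such quotients are detected by the residual-mod-$I$ data for $I$ a sufficiently high power of a suitable ideal. Uniqueness of semisimple representations by their characters makes the resulting inverse system canonical and compatible, so $\rho:=\varprojlim_V\rho_V$ gives a continuous representation of $\pi_1(X)$ which yields the required lisse sheaf.

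The construction of $\rho_V$ uses the Hilbert irreducibility theorem (Theorem~\ref{t:Hilbert}): choose a regular arithmetic curve $\varphi:C\to X$ such that $\pi_1(C)\to\pi_1(X)/V$ is surjective. By condition (i), $\varphi^*f\in\LS_r(C)$, so there is a semisimple representation $\rho_C:\pi_1(C)\to GL(r,\overline{E}_\lambda)$ whose character on Frobenii is dictated by $f$. I would then argue $\rho_C$ factors through $\pi_1(X)/V$: let $N:=\ker(\pi_1(C)\to\pi_1(X)/V)$, which by Galois theory is identified with $\pi_1$ of (a connected component of) $\tilde C:=C\times_X X_V$. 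By condition (b), $f|_{\tilde C}$ is trivial modulo $I$, so Chebotarev density on $\tilde C$ gives $\chi_{\rho_C}|_N\equiv r\pmod{I}$. Since the semisimple representation $\rho_C$ is determined by its character in characteristic $0$, and $\chi_{\rho_C}$ is forced by $f$ to depend only on the image in $\pi_1(X)/V$ on the dense subset of Frobenii (hence everywhere by continuity), the nondegeneracy of the trace pairing on $\bigoplus_i\operatorname{End}(\rho_{C,i})$ (Burnside--Jacobson density for the semisimple quotient) forces $\rho_C(N)=\{1\}$. This yields the descended $\rho_V$.

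The main obstacle is precisely the descent step: promoting the congruence $\chi_{\rho_C}|_N\equiv r\pmod{I}$ to the honest equality $\rho_C|_N=\mathrm{Id}$. Passing to the mod-$I$ reduction directly fails because $O/I$ is not a field of characteristic $0$ and traces no longer separate semisimple representations. The route I sketched above sidesteps this by arguing in characteristic $0$ on $\pi_1(C)$ and exploiting the fact that the character $\chi_{\rho_C}$, being prescribed by $f$, is continuous and fiberwise-constant on $N$ by an independent argument (using condition (b) together with variation of $I$ and the cofinality of $\{V_I\}$). Making this bookkeeping precise, and ensuring the various choices of curve $C$ produce mutually compatible $\rho_V$'s (which reduces again to uniqueness via characters), is the heart of the proof; all other steps are formal consequences of Chebotarev density and the Zariski--Nagata purity invoked in Theorem~\ref{t:Hilbert}.
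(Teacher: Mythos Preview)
Your descent step has a structural error that cannot be repaired within the inverse-limit framework. You want $\rho_C$ to factor through the \emph{finite} quotient $\pi_1(X)/V$, but this is simply false: the eventual representation $\rho:\pi_1(X)\to GL(r,O)$ typically has infinite image, so it does not factor through any finite quotient, and neither does its pullback $\rho_C$. The congruence $\chi_{\rho_C}|_N\equiv r\pmod I$ is all you can extract for a single $I$, and there is no way to upgrade it to $\rho_C|_N=\mathrm{Id}$ --- the ``independent argument'' you allude to would require varying $I$ while keeping the same curve $C$, but your curve was chosen only to surject onto $\pi_1(X)/V_I$ for one $I$. The cofinality claim about $\{V_I\}$ is both unjustified (Definition~\ref{d:LS'}(b) places no upper bound on how large $V_I$ may be) and irrelevant to fixing this.

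The paper's key move is to work with the closed subgroup $H$ directly rather than with open $V$'s: Proposition~\ref{p:Hilbert-pro-l} (the almost-pro-$\ell$ upgrade of Hilbert irreducibility) produces a single curve $C$ with $\pi_1(C)\twoheadrightarrow\pi_1(X)/H$. Setting $H_C:=\ker(\pi_1(C)\to\pi_1(X)/H)$, one has $H_C\subset\varphi_*^{-1}(V_I)$ for \emph{every} $I$, so \v{C}ebotarev gives $\det(1-t\rho_C(h))\equiv(1-t)^r\pmod I$ for all $I$ simultaneously, hence $\rho_C(h)$ is unipotent for $h\in H_C$. Since $H_C$ is normal and $\rho_C$ is semisimple, $\rho_C|_{H_C}$ is semisimple, so unipotence forces $\rho_C|_{H_C}=\mathrm{Id}$. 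This yields a single representation of $\pi_1(X)/H$ in one stroke, with no inverse limit. There is then a second step you have not addressed: the resulting representation is only known to match $f$ at points of $X$ hit by $C$ with the correct residue field; the paper handles the remaining points either via a lemma of Faltings or by a compactness argument over a varying family of curves.
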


\begin{prop} \label{p:c}
Let $X$ be a regular scheme of finite type over $\BZ [\ell^{-1}]$. Suppose that $f\in\widetilde{\LS}_r(X)$ satisfies condition (i) of Theorem~\ref{Mainth}. If there exists a dense open $U\subset X$ such that $f|_U\in\LS_r(U)$ then $f\in\LS_r(X)$.
\end{prop}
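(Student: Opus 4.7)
The plan is to extend the lisse sheaf $\E_U$ realizing $f|_U$ (uniquely determined up to semisimplification, which is the right level for $\LS_r$) to a lisse sheaf on $X$ whose characteristic polynomials give $f$. By the Zariski--Nagata purity theorem---applicable because $X$ is regular---lisse $E_\lambda$-sheaves on $U$ extend uniquely across closed subsets of $X$ of codimension $\geq 2$, so I may enlarge $U$ and assume $D := X \setminus U$ is pure of codimension $1$, with irreducible components $D_1, \dots, D_m$ and generic points $\eta_1, \dots, \eta_m$. Let $\rho \colon \pi_1(U) \to GL(r, E_\lambda)$ be the representation attached to $\E_U^{ss}$. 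The same theorem reduces the existence of an extension of $\rho$ to $\pi_1(X)$ to the triviality $\rho(I_{\eta_i}) = 1$ for every $i$, where $I_{\eta_i} \subset \pi_1(U)$ denotes the inertia subgroup at $\eta_i$ (well-defined up to conjugation).

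Fix $i$. I would construct a regular arithmetic curve $\varphi \colon C \to X$ transverse to $D_i$ at a good closed point and use condition~(i) to force $\rho(I_{\eta_i}) = 1$. Pick a closed point $x \in D_i$ at which $D_i$ is a regular divisor and is the only component of $D$ passing through $x$; such points form a dense open in $D_i$. Choose a regular system of parameters $t_1, \dots, t_d$ of $\cO_{X,x}$ with $t_1$ a local equation for $D_i$, spread $t_2, \dots, t_d$ to sections on an affine open $X_0 \ni x$, and take $C$ to be the normalization of the irreducible component through $x$ of $V(t_2, \dots, t_d) \subset X_0$. This yields a regular arithmetic curve with a morphism $\varphi \colon C \to X$, a unique closed point $c \in C$ above $x$, and $\varphi^* D$ supported on $\{c\}$ with multiplicity one near $c$.

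By condition~(i), $\varphi^*(f) \in \LS_r(C)$, so there is a lisse sheaf $\E_C$ on $C$ realizing these characteristic polynomials. Restricted to $C \cap U$, $\E_C$ and $\varphi^* \E_U$ have equal characteristic polynomials, so by the \v{C}ebotarev density theorem (applied to the normal scheme $C \cap U$) their semisimplifications agree. Therefore the pullback $\varphi^*\rho$ extends from $\pi_1(C \cap U)$ to $\pi_1(C)$; in other words, $\rho$ is trivial on the image in $\pi_1(U)$ of the inertia $I_c \subset \pi_1(C \cap U)$.

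The hardest step will be transferring this triviality at $I_c$ to $\rho(I_{\eta_i}) = 1$. The transversality of $\varphi$ at $c$ (a uniformizer of $\cO_{X, \eta_i}$ pulls back to a uniformizer of $\cO_{C, c}$) should make the induced map $I_c \to I_{\eta_i}$ surjective up to conjugation in $\pi_1(U)$: on the tame quotient, both groups are identified with $\hat{\BZ}'(1)$ and the map is multiplication by the ramification index $e = 1$, hence an isomorphism. The more subtle case of the wild (pro-$p$) inertia would be handled by analyzing the map $\cO_{X, x}^{sh} \to \cO_{C, c}^{sh}$ of strictly Henselian local rings, in which $t_1$ still goes to a uniformizer. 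As a fallback, if a single transverse curve does not suffice, I would vary $x$ over the dense family of good points in $D_i$ and combine the inertia contributions from many such transverse curves to normally generate $I_{\eta_i}$ in $\pi_1(U)$.
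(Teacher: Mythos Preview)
Your argument has a genuine gap at the semisimplicity step. From \v{C}ebotarev you correctly obtain $(\varphi^*\E_U)^{ss}\simeq(\E_C)^{ss}|_{C\cap U}$, and the right-hand side extends to all of $C$. But $\varphi^*\rho$ is \emph{not} known to be semisimple: $\rho$ is semisimple on $\pi_1(U)$, yet pullback along $\varphi_*\colon\pi_1(C\cap U)\to\pi_1(U)$ need not preserve semisimplicity. Hence you may only conclude that $\varphi^*\rho|_{I_c}$ is \emph{unipotent}, not that it is trivial; equivalently, $\rho$ restricted to $\varphi_*(I_c)$ is unipotent. A semisimple $\rho$ can certainly have nontrivially unipotent restriction to an inertia subgroup, and such a $\rho$ does not extend across $D_i$. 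So even granting your inertia comparison, the conclusion $\rho(I_{\eta_i})=1$ does not follow.

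This is precisely why the paper does not use an arbitrary transverse curve. It argues by contradiction: assuming $\E_U$ does not extend, the specialization lemma (Lemma~\ref{l:specialization} and Corollary~\ref{c:specialization}) produces a closed point $x\in X\setminus U$ and a direction $l\subset T_xX$ such that \emph{every} curve tangent to $l$ has ramified pullback. Then Hilbert irreducibility in the form of Proposition~\ref{p:Hilbert-pro-l} is invoked to choose such a curve $C$ with the additional feature that $\varphi_*\colon\pi_1(\varphi^{-1}(U))\to\pi_1(U)/\Ker\rho$ is \emph{surjective}; this surjectivity is what forces $\varphi^*\E_U$ to be semisimple, so that condition~(i) genuinely yields unramifiedness at $c$ and hence a contradiction. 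Note also that the direction $l$ supplied by the specialization lemma need not be transverse to the divisor: in the wildly ramified ``Case~2'' of Lemma~\ref{l:specialization} it is chosen \emph{inside} $T_xD$. So the paper sidesteps your second difficulty (comparing $I_c$ with $I_{\eta_i}$) rather than confronting it, and your fallback of varying $x$ over transverse curves would not obviously cover that case.
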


Propositions~\ref{p:b} and ~\ref{p:c} will be proved in \S\ref{s:proofb} and 
\S\ref{s:proofc} using the Hilbert irreducibility theorem. 

\subsection{Hilbert irreducibility}  \label{ss:Hilbert}
We prefer the following formulation of Hilbert irreducibility, which is very close to
\cite[Lemma 20]{W1} or \cite[Proposition 1.5]{KS1}.

\begin{theorem}  \label{t:Hilbert}
Let $X$ be an irreducible regular scheme of finite type over $\BZ$, $U\subset X$ a non-empty
open subset, $H\subset\pi_1 (U)$ an open normal subgroup, and $S\subset X$ a finite reduced
subscheme.
\begin{enumerate}
\item[(i)] There exists an irreducible regular arithmetic curve $C$ with a morphism 
$\varphi :C\to X$ and a section $\sigma :S\to C$ such that $\varphi (C)\cap U\ne\emptyset$
and the homomorphism $\pi_1(\varphi^{-1}(U))\to\pi_1 (U)/H$ is surjective.
\item[(ii)]  Suppose that for each $s\in S$ we are given a 1-dimensional subspace 
$l_s\subset T_sX$, where $T_sX=(\fm_s/\fm_s^2)^*$ is the tangent space. Then one can find
$C$, $\varphi$, $\sigma$ as above so that for each $s\in S$ one has\,
$\im (T_{\sigma (s)}C\to T_sX)=l_s$.
\end{enumerate}
\end{theorem}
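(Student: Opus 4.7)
The plan is to proceed by induction on $\dim X$. The base case $\dim X = 1$ is trivial: take $C = X$ with $\varphi = \operatorname{id}$ and the evident section. For $\dim X \geq 2$, the strategy is to find an irreducible regular closed subscheme $Y \subset X$ of dimension $\dim X - 1$ such that $Y \supset S$, $Y \cap U$ is non-empty, the map $\pi_1(Y \cap U) \to \pi_1(U)/H$ is surjective, and $T_sY \supset l_s$ for each $s \in S$. Then in part (ii) we replace $l_s$ by a $1$-dimensional subspace of $T_sY$ still equal to $l_s$, take $H_Y \subset \pi_1(Y \cap U)$ to be the preimage of $H$, and apply the inductive hypothesis to $(Y, Y \cap U, H_Y, S, \{l_s\})$.

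To construct $Y$, after replacing $X$ by an affine open neighborhood of $S$ together with some chosen closed point of $U$, embed $X$ as a closed subscheme of some $\mathbb{A}^N_{\mathbb{Z}}$. Consider the linear system $\mathcal{L}_d$ of hypersurfaces of degree $d$ in $\mathbb{A}^N_{\mathbb{Z}}$, and inside it the affine subspace $\mathcal{L}_d^{S,l}$ cut out by the linear conditions ``$f$ vanishes on $S$'' and ``the directional derivative of $f$ along $l_s$ vanishes at $s$, for each $s \in S$''. For $d$ large enough, $\mathcal{L}_d^{S,l}$ is very ample modulo these constraints, and a Bertini-type argument (using the regularity of $X$ and the fact that the constraints at $S$ are of codimension independent of $d$) shows that the locus of $f \in \mathcal{L}_d^{S,l}$ for which $Y_f := X \cap \{f=0\}$ fails to be regular, geometrically irreducible, or to satisfy $T_sY_f \supset l_s$ transversally, is contained in a proper closed subset of $\mathcal{L}_d^{S,l}$.

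The main obstacle is the surjectivity of $\pi_1(Y_f \cap U) \to \pi_1(U)/H$, which is the Hilbert irreducibility content. Let $U_H \to U$ be the connected finite étale cover corresponding to $H$, with Galois group $G = \pi_1(U)/H$. For each proper subgroup $G' \subsetneq G$, the set of $f$ for which $\operatorname{im}(\pi_1(Y_f \cap U) \to G)$ is contained in a conjugate of $G'$ corresponds (via base change of $U_H \to U$) to $f$'s whose associated $Y_f \cap U$ lifts to a proper intermediate cover; this set is thin in $\mathcal{L}_d^{S,l}$ in the Hilbert sense. To produce an actual $f$ avoiding the union of all such thin sets (one for each maximal proper subgroup of $G$, up to conjugacy), we invoke the classical Hilbert irreducibility theorem over the arithmetic base: $\mathcal{L}_d^{S,l}$ is an irreducible regular scheme of finite type over $\mathbb{Z}$, and regular integral points (or, more precisely, morphisms from arithmetic curves) on its generic fiber exist outside any prescribed thin set.

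The most delicate point is ensuring that the Hilbert-irreducibility statement applies in this mixed-characteristic geometric setting with the prescribed tangent conditions at $S$: one must verify that after the linear conditions cutting out $\mathcal{L}_d^{S,l}$, the base $\mathcal{L}_d^{S,l}$ is still regular, of finite type over $\mathbb{Z}$, and admits enough rational sections of arithmetic curves to be compatible with Wiesend's variant of Hilbert irreducibility (cf.\ \cite[Lemma~20]{W1}, \cite[Proposition~1.5]{KS1}). Given this, the induction closes and yields the desired $(C, \varphi, \sigma)$. The section $\sigma : S \to C$ is traced through the inductive construction from the inclusion $S \subset Y_f \subset \cdots \subset X$, and the tangent image condition in (ii) holds by our choice of hypersurfaces at each step.
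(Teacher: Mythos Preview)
Your inductive slicing strategy is genuinely different from the paper's proof, but it has a real gap at the Bertini step. You claim that for large $d$ the locus of $f\in\mathcal L_d^{S,l}$ for which $Y_f=X\cap\{f=0\}$ fails to be regular is contained in a proper closed subset. Over a field this is fine, but $X$ is a regular scheme over $\BZ$, and there is no Bertini theorem of this shape over $\BZ$: for an $f$ with integer coefficients, regularity of $Y_f$ at a closed point lying over a prime $p$ is a congruence condition modulo $p^2$, and as $p$ varies these infinitely many conditions do not cut out the complement of a closed set in the parameter space. At best one gets a density-one statement (\`a la Poonen), not an open-dense one, and you then have to combine that density statement with Hilbert irreducibility and with the congruence conditions at the points of $S$ simultaneously. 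You flag this as ``the most delicate point'' but do not resolve it; moreover, your proposed fix (``morphisms from arithmetic curves to $\mathcal L_d^{S,l}$ avoiding a thin set'') is essentially the statement of Theorem~\ref{t:Hilbert} applied to the parameter space, so the argument becomes circular.

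The paper avoids all of this by not inducting on dimension and not seeking global hypersurface sections. Instead it works directly: using the regularity of $X$ only locally (Lemma~\ref{l:regular}) to produce, at each $s\in S$, a formal curve $\hat C_s\subset\Spec\hat O_{X,s}$ tangent to $l_s$ and meeting $U$; it then packages these into a point $\alpha_0\in Y(\hat O_A)$ and applies classical Hilbert irreducibility over a global field $k$ (either $\BQ$ or $\BF_p(t)$) together with weak approximation at the finitely many places under $S$ (Lemmas~\ref{l:1Bloch}--\ref{l:2Bloch}) to find a $k'$-point of $U$ that is inert in $\tilde U$ and agrees with $\alpha_0$ to second order. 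The curve $C$ is then just (an open piece of) $\Spec O_{k'}$. This bypasses entirely the question of whether a global regular divisor exists, which is the point at which your induction breaks down.
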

In Appendix~\ref{s:Hilbert} we deduce Theorem~\ref{t:Hilbert} from a conventional formulation of
Hilbert irreducibility.

\begin{rems}  \label{r:regularity_2ess}
\begin{enumerate}
\item[(i)] In Theorem~\ref{t:Hilbert} the regularity assumption 
cannot be replaced by normality, see Remark~\ref{r:regularity_ess} at the end of \S\ref{s:counterex}.
\item[(ii)] Theorem~\ref{t:Hilbert} and its proof remain valid for regular \emph{algebraic spaces}
of  finite type over $\BZ$.
\end{enumerate}
\end{rems}

\begin{prop}    \label{p:Hilbert-pro-l}
Let $\ell$ be a prime such that $X\otimes\BZ [\ell^{-1}]\ne\emptyset$.
Then Theorem~\ref{t:Hilbert} remains valid for every closed normal subgroup
$H\subset\pi_1(U)$ such that $\pi_1(U)/H$ is almost pro-$\ell$.
\end{prop}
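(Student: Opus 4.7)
The plan is to reduce the pro-$\ell$ case to the finite case already handled by Theorem~\ref{t:Hilbert}, by combining Theorem~\ref{t:Hilbert} with a countable exhaustion of $H$ and a Hilbertian compactness argument.

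First, by Remark~\ref{r:amost-pro-ell}, the almost pro-$\ell$ group $\pi_1(U)/H$ contains an open normal pro-$\ell$ subgroup; let $H_0 \subset \pi_1(U)$ be its preimage, so that $H_0$ is open normal in $\pi_1(U)$, the quotient $\pi_1(U)/H_0$ is finite, and $H_0/H$ is pro-$\ell$. Since $X \otimes \BZ[\ell^{-1}]$ is non-empty by hypothesis, after replacing $X$ by this open subscheme (which is again regular of finite type over $\BZ$ and contains a dense open of the original $U$) we may assume $\ell$ is invertible on $X$.

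Next, since $U$ is of finite type over $\BZ$, it has countably many finite etale covers up to isomorphism, so $\pi_1(U)$ has countably many open normal subgroups. Hence we may choose a descending sequence $H_0 \supset H_1 \supset H_2 \supset \cdots$ of open normal subgroups of $\pi_1(U)$ with $\bigcap_i H_i = H$ and each $\pi_1(U)/H_i$ finite. For every $i$, Theorem~\ref{t:Hilbert} furnishes a triple $(C_i,\varphi_i,\sigma_i)$ achieving surjectivity onto $\pi_1(U)/H_i$ together with the section and tangent conditions of Theorem~\ref{t:Hilbert}(ii).

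The key step is to extract a single triple $(C,\varphi,\sigma)$ that works simultaneously for every $H_i$. For this one inspects the proof of Theorem~\ref{t:Hilbert} in Appendix~\ref{s:Hilbert}: admissible triples are realized as specializations parametrized by the rational points of a Hilbertian scheme (obtained e.g.\ from a projection $X \to \BA^d_{\BZ}$), and the surjectivity condition for a fixed finite quotient $\pi_1(U)/H_i$ carves out a Hilbert set in the parameter space. A countable intersection of Hilbert sets in a Hilbertian scheme over $\BZ$ is again a (non-empty) Hilbert set, yielding the desired simultaneous triple. For such a triple, $\pi_1(\varphi^{-1}(U)) \to \pi_1(U)/H_i$ is surjective for every $i$, so the image of $\pi_1(\varphi^{-1}(U))$ in $\pi_1(U)/H = \varprojlim_i \pi_1(U)/H_i$ is dense; being the image of a compact group, it is also closed, hence all of $\pi_1(U)/H$.

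The main obstacle is precisely this Hilbertian extraction: one must verify that the construction in Appendix~\ref{s:Hilbert} actually produces a Hilbert set of good triples in a fixed parameter space (not merely a single triple), and that a countable intersection of such Hilbert sets remains non-empty. This is the style of argument used by Wiesend~\cite{W1} and Kerz--Schmidt~\cite{KS1} to reduce pro-$\ell$ statements to classical Hilbert irreducibility.
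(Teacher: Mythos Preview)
The central claim of your extraction step---that a countable intersection of Hilbert sets is non-empty---is false over the countable Hilbertian fields that actually arise in Appendix~\ref{s:Hilbert} (namely $\BQ$ or $\BF_p(t)$). For instance, enumerate $\BQ=\{q_1,q_2,\dots\}$ and set $H_n:=\{t\in\BQ: t-q_n \text{ is not a square}\}$; each $H_n$ is a Hilbert set (coming from the polynomial $X^2-(T-q_n)$), yet $\bigcap_n H_n=\emptyset$. So the ``Hilbertian compactness'' you invoke does not exist in this generality, and the fact that your covers form an $\ell$-tower provides no evident rescue without further structural input. (Your replacement of $X$ by $X\otimes\BZ[\ell^{-1}]$ is also problematic if $S$ contains points of residue characteristic~$\ell$, but this is minor by comparison.)

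The paper avoids any countable intersection by a Frattini-type argument. Write $G:=\pi_1(U)/H$ and choose an open normal pro-$\ell$ subgroup $V'\subset G$; let $V\subset V'$ be the kernel of $V'\to H_1(V',\BZ/\ell\BZ)$. Finiteness of $H^1$ with $\BZ/\ell\BZ$-coefficients for schemes of finite type over $\BZ[\ell^{-1}]$ (this is exactly where the hypothesis $X\otimes\BZ[\ell^{-1}]\ne\emptyset$ is used) shows that $H_1(V',\BZ/\ell\BZ)$ is finite, hence $V$ is open in $G$. The group-theoretic point is then that any closed subgroup $K\subset G$ surjecting onto the \emph{single} finite quotient $G/V$ already equals $G$: restricting to $V'$, one is asking that a closed subgroup of a pro-$\ell$ group surject onto its Frattini quotient, which forces it to be the whole group. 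Thus one application of Theorem~\ref{t:Hilbert}, to the open normal subgroup of $\pi_1(U)$ corresponding to $V$, suffices---no limiting process is needed. This Frattini reduction, not a countable Hilbert intersection, is the mechanism behind the pro-$\ell$ case.
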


\begin{proof}
Set $G:=\pi_1(U)/H$. It suffices to find an open normal subgroup 
$V\subset G$ with the following property: every closed subgroup $K\subset G$ such that
the map $K\to G/V$ is surjective equals $G$ (then one can apply Theorem~\ref{t:Hilbert} to
$V$ instead of $H$).

Let $V'\subset G$ be an open normal pro-$\ell$-subgroup and let
$V\subset V'$ be the normal subgroup such that $V'/V=H_1 (V',\BZ/\ell\BZ)$.
Then $V$ has the required properties. Let us check that $V$ is open.
Let $\tilde U \to U$ be the covering corresponding to $V'$. Set 
$\tilde U[\ell^{-1}]:=\tilde U\otimes\BZ [\ell^{-1}]$. Then 
\[
H^1(V',\BZ/\ell\BZ)\subset H^1 (\tilde U, \BZ/\ell\BZ) \subset H^1 (\tilde U[\ell^{-1}], \BZ/\ell\BZ).
\]
The group $H^1(\tilde U[\ell^{-1}] ,\BZ/\ell\BZ)$ is finite (see \cite[ch.II, Proposition~7.1]{Milne}). 
Therefore  $H^1(V',\BZ/\ell\BZ)$ is finite.
Thus the group $V'/V=H_1(V',\BZ/\ell\BZ)$ is finite and therefore $V$ is open.
\end{proof}

\subsection{The characteristic $p$ case}% where $X$ is over $\BF_p$}  
 \label{ss:char_p}
 
\begin{rems}   \label{r:Bertini} 
%\begin{enumerate}
%\item[(i)]  
(i) If $X$ is an irreducible smooth projective scheme over $\BF_p$ and $U=X$ then
Theorem~\ref{t:Hilbert} remains valid for $H=\{ 1 \}$. This follows from
B.~Poonen's ``Bertini theorem over finite fields" \cite[Theorems 1.1-1.2]{Po} combined with the usual Bertini theorem  \cite[Ch.III, Corollary 7.9]{Ha}.

(ii) More generally, suppose that $X$ is a quasiprojective irreducible smooth  scheme over 
$\BF_p$ and $U\subset X$ any non-empty open subset. Represent $X$ as an
open subvariety of an irreducible normal projective variety $\bar X$ over $\BF_p$. Let
$D_1,\ldots ,D_n$ be the irreducible components of $\bar X\setminus U$ of dimension $\dim X-1$.
Etale coverings of $U$ tamely ramified at the generic points of $D_1,\ldots ,D_n$ are classified
by $\pi_1(U)/H$, where $H$ is a certain normal subgroup of $\pi_1(U)$. Then 
Theorem~\ref{t:Hilbert} remains valid for this~$H$. This follows from 
Proposition~\ref{p:justifying_Bertini-remark} (see Appendix~\ref{s:Bertini-Poonen}).
% we will show that Theorem~\ref{t:Hilbert} remains valid for this~$H$.c 
%%Using \cite[Theo\-rem~1.3]{Po} and \cite[Theo\-rem~2.1~(A)]{FL} one can show that
%%Theorem~\ref{t:Hilbert} remains valid for this $H$.
%%\end{enumerate}
\end{rems}

\begin{rem}   \label{r:modifications} 
If $X$ is a manifold over $\BF_p$ one can slightly modify the proof of Theorem~\ref{Mainth}.
First, by Remarks~\ref{r:Bertini}, it suffices to prove a weaker version of Proposition~\ref{p:a},
with $\LS'_r(U)$ being replaced by the set of all $f\in\widetilde{\LS}_r(U)$ satisfying the following condition: for every nonzero ideal $I\subset O$
there exists a surjective finite etale morphism $\pi: U'\to U$ such that $\pi^*f$
is trivial modulo~$I$. Another possible modification 
%of the proof of Theorem~\ref{Mainth} for schemes over $\BF_p$ 
is indicated in Remark~\ref{r:geom_semisimplicity} below.  
\end{rem}

\section{Proof of Proposition~\ref{p:a} (after G.~Wiesend)} \label{s:proof1}
%\subsection{Pr\oof of a weaker statement}   \label{ss:weaker}
Proposition~\ref{p:a} clearly follows from the next lemma, in which condition (ii) 
is stronger than condition (ii) from Theorem~\ref{Mainth}. The lemma and its proof only slightly differs from 
\cite[Proposition 17]{W1} or \cite[Proposition 3.6]{KS1}.

\begin{lem}   \label{lem1}
Let $X$ be a scheme of finite type over $\BZ [\ell^{-1}]$ and $f\in\widetilde{\LS}_r(X)$.
Assume that 
\begin{enumerate}
\item[(i)] for every regular arithmetic curve $C$ and every morphism 
$\varphi :C\to X$ one has $\varphi^*(f)\in\LS_r(C)$;
\item[(ii)]  if $C$ is a separated curve over a finite field then $\varphi^*(f)\in\LS_r^{\tame}(C)$.
\end{enumerate}
Then there is a dense open $U\subset X$ such that  the image of $f$ in $\widetilde{\LS}_r(U)$
belongs to $\LS'_r(U)$.
\end{lem}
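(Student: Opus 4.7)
My plan is to follow Wiesend's strategy as in \cite[Proposition~17]{W1} and Kerz--Saito \cite[Proposition~3.6]{KS1}. After reducing to the case of a connected regular dense open $U \subset X$ (shrinking $X$ is permitted), the goal is to produce the closed normal subgroup $H \subset \pi_1(U)$ required by Definition~\ref{d:LS'}. I would do this by constructing, for each nonzero ideal $I \subset O$, a continuous homomorphism $\bar\rho_I \colon \pi_1(U) \to GL(r, O/I)$ whose characteristic polynomial at each closed-point Frobenius matches $f$ modulo $I$, then setting $H := \bigcap_I \ker(\bar\rho_I)$ and $V_I := \ker(\bar\rho_I)$. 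Granted these, the required axioms of $\LS'_r(U)$ are immediate: $\pi_1(U)/H$ embeds into $\varprojlim_n GL(r, O/\ell^n O) = GL(r, O)$, which is almost pro-$\ell$ because the principal-units subgroup $\ker(GL(r,O)\to GL(r, O/\fm))$ is pro-$\ell$; and on $X_{V_I}$ the homomorphism $\bar\rho_I$ becomes trivial, so the characteristic polynomials of the trivial representation are $(1-t)^r$, which matches the pullback of $f$ modulo $I$.

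The heart of the argument is constructing $\bar\rho_I$. Fix $I$. Condition~(i) produces, for each regular arithmetic curve $\varphi\colon C \to U$, a lisse $E_\lambda$-sheaf $\E_C$ with characteristic polynomials $\varphi^* f$; choosing an invariant lattice and reducing modulo $I$ yields a finite-image representation $\bar\rho_{C,I}\colon \pi_1(C) \to GL(r, O/I)$. I would assemble these $\bar\rho_{C,I}$ into a single representation of $\pi_1(U)$ by producing a finite quotient $q\colon \pi_1(U) \twoheadrightarrow G$ together with an embedding $G \hookrightarrow GL(r,O/I)$ through which every $\bar\rho_{C,I}$ factors. The crucial tool is Hilbert irreducibility in the form of Theorem~\ref{t:Hilbert}: for any candidate open normal subgroup $V \subset \pi_1(U)$, it produces a regular arithmetic curve $\varphi\colon C \to U$ with $\pi_1(\varphi^{-1}(U)) \twoheadrightarrow \pi_1(U)/V$, so that $\bar\rho_{C,I}$ on this $C$ sees the entire candidate quotient; the \v{C}ebotarev density theorem then pins down the homomorphism uniquely on this quotient. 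Condition~(ii) (tameness on curves over finite fields) is used to rule out wild-inertia contributions in the monodromy of the $\bar\rho_{C,I}$, so that the finite quotients through which they factor fit into an almost pro-$\ell$ tower compatibly with the inverse limit above.

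The main obstacle is precisely this gluing step: a priori the data $\{\bar\rho_{C,I}\}$ gives only a compatible family of $G$-torsors on individual curves $C \to U$, and one must upgrade this to a single $G$-torsor on $U$. The key inputs that make the gluing work are (a) Hilbert irreducibility, which forces enough curves to probe every finite quotient of $\pi_1(U)$, and (b) the tameness hypothesis (ii), which controls ramification uniformly across the family of curves. Once $\bar\rho_I$ is constructed for each $I$, passing to the inverse system and intersecting kernels to form $H$ is formal and yields the desired element of $\LS'_r(U)$.
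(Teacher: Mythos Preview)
Your proposal has a genuine gap at the gluing step, and it conflates this lemma with Proposition~\ref{p:b}. For a curve $\varphi\colon C\to U$ you obtain $\bar\rho_{C,I}\colon\pi_1(C)\to GL(r,O/I)$ and want to descend it to $\pi_1(U)$. Hilbert irreducibility (Theorem~\ref{t:Hilbert}) lets you, \emph{given} an open normal $V\subset\pi_1(U)$, find a curve with $\pi_1(\varphi^{-1}(U))\twoheadrightarrow\pi_1(U)/V$; it does not supply the candidate $V$, nor does it tell you that $\bar\rho_{C,I}$ factors through $\pi_1(U)/V$. In \S\ref{s:proofb} this factoring is exactly Lemma~\ref{l:Kerz1}, and its proof uses condition~(b) of Definition~\ref{d:LS'}---i.e., the conclusion you are trying to establish here. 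Your sketch does not break this circularity. (There is also a secondary issue: $\bar\rho_{C,I}$ depends on a choice of lattice, so only its semisimplification is determined by $\varphi^*f$, and only up to conjugacy; ``gluing'' such data across curves is not a well-posed problem as stated.)

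The paper's argument is entirely different and does not use Hilbert irreducibility at all. After shrinking, one fibers $X$ as a relative smooth affine curve $X\to S$ with good compactification and a section $\sigma$; by induction on $\dim X$ one may assume $\sigma^*f\in\LS'_r(S)$. For each $n$, tame $G_n$-torsors on the fibers (with $G_n=GL(r,O/\fm^n)$) trivialized along $\sigma$ are parametrized by a scheme $T_n$ finite \'etale over $S$ \cite[XIII]{SGA1}; the Weil restriction of the universal torsor gives a single finite \'etale cover $Y_n\to X$ that trivializes \emph{every} tame rank-$r$ $(O/\fm^n)$-local system fiberwise, and one sets $X_n:=S_n\times_S Y_n$. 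The almost pro-$\ell$ property is read off from the semidirect product $\pi_1^t(X_\eta)\simeq\Gamma\ltimes K$ with $K=\pi_1^t(X_{\bar\eta})$ topologically finitely generated. Tameness (ii) is what makes the $T_n$ finite over $S$ and forces the relevant quotients to be almost pro-$\ell$; no curve-by-curve gluing occurs.
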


\begin{proof}
We can assume that $X$ is reduced, irreducible, and normal. If $\dim X\le 1$ the statement is obvious.
Now assume that $\dim X>1$ and the lemma holds for all schemes whose dimension is less than that of  $X$. Replacing $X$ by an open subset we can assume that there is a smooth morphism from $X$ to some scheme $S$ such that the geometric fibers of the morphism are non-empty connected curves.
%Clearly $S$ has finite type over $\BZ [\ell^{-1}]$. 
After shrinking $S$ we can assume that one of the following holds:

\begin{enumerate}
\item[(i)]  the morphism $X\to S$ has a factorization
\[
X=\bar X\setminus D\subset\bar X {\buildrel{\pi}\over{\longrightarrow}} S,
\]
where $\pi$ is a smooth projective morphism, $X\setminus D$ is fiberwise dense in $X$,
and $D$ is finite and etale over~$S$;

\item[(ii)]  $S$ is a scheme over $\BF_p$ and for some $n\in\BN$ the morphism $X^{(p^n)}\to S$
has a factorization as in (i)
%\[
%X^{(p^n)}=\overline{X^{(p^n)}}\setminus D\subset\overline{X^{(p^n)}}
%\ {\buildrel{\pi}\over{\longrightarrow}} S,
%\]
%where $\pi$ is a smooth projective morphism,  $\overline{X^{(p^n)}}\setminus D$ is fiberwise 
%dense in $\overline{X^{(p^n)}}$, and $D$ is finite and etale over $S$ 
(here $X^{(p^n)}$ is obtained from $X$ by base change with respect to $\Fr^n:S\to S$).
\end{enumerate}

If we are in situation (ii) then it suffices to prove the statement for $X^{(p^n)}$ instead of $X$.
So we can assume that we are in situation (i). 
(Another way to conclude that is suffices to consider situation (i) is to use M.~Artin's theorem
on the existence of ``elementary fibrations" \cite[expos\'e XI, Propostion 3.3]{SGA4}.)
% to conclude that is suffices to consider situation (i)). 
%%%{\bf Can one also bypass (ii) using 
%%%Artin's ``elementary fibrations" \cite[expos\'e XI, Propostion 3.3]{SGA4}?}

We can also assume that the morphism $X\to S$ has a section $\sigma :S\to X$ and that
$\sigma^* (f)\in \LS'_r(S)$ (otherwise replace $S$ by $S'$ and
$X$ by $X\times_SS'$, where $S'$ is an appropriate scheme etale over $S$). 
%Finally, we can assume that $S$ is connected and non-empty.

Let $\fm\subset O$ be the maximal ideal. For every $n\in\BN$ set $G_n=GL(r,O/\fm^n)$ and
consider the functor that associates to an $S$-scheme $S'$ the set of isomorphism classes of tame\footnote{Here ``tame" means ``tamely ramified along $D$ relatively to $S\,$".} $G_n$-torsors on $X\times_SS'$ trivialized over $S' \buildrel{\sigma'}\over{\mono}X\times_SS'$. It follows from \cite[expos\'e~XIII, Corollaries 2.8-2.9]{SGA1} that 
 \begin{enumerate}
\item[(i)] this functor is representable by a scheme $T_n$ etale and of finite type over $S$,
\item[(ii)]  the morphism $T_{n+1}\to T_n$ is finite for each $n\ge 1$. 
\end{enumerate}
So after shrinking $S$ we can assume that the morphism $T_n\to S$ is finite for each $n$.
We will prove that in this situation $f\in\LS'_r(X)$.

By Definition~\ref{d:LS'}, to show that $f\in\LS'_r(X)$ we have to construct surjective finite etale morphisms $X_n\to X$, $n\in\BN$, so that
 \begin{enumerate}
\item[(a)] the image of $f$ in $\widetilde{\LS}_r(X_n)$ is trivial modulo~$\fm^n$,
\item[(b)] for some (or any) geometric point $\bar x$ of $X$, the quotient of the group 
$\pi_1 (X,\bar x)$ by the intersection of the kernels of its actions on the fibers $(X_n)_{\bar x}$, $n\in\BN$, is almost pro-$\ell$. 
\end{enumerate}

Since $\sigma^* (f)\in \LS'_r(S)$ we have a sequence of surjective finite etale morphisms 
$S_n\to S$, $n\in\BN$, that satisfies the analogs of (a) and (b) for $S$.
On the other hand, we will construct surjective finite etale morphisms $Y_n\to X$, $n\in\BN$, with the following properties: 
 \begin{enumerate}
\item[$(*)$]
for every geometric point $\bar s\to S$,
every tame locally constant sheaf of $r$-dimensional free $(O/\fm^n)$-modules on the fiber 
$X_{\bar s}$ has constant pullback to $(Y_n)_{\bar s}$;
\item[$(**)$] 
for some (or any) geometric point $\bar x$ of $X$, the quotient of the group $\pi_1 (X,\bar x)$ by the intersection of the kernels of its actions on the fibers $(Y_n)_{\bar x}$, $n\in\BN$, is almost pro-$\ell$. 
\end{enumerate}
Then we can take $X_n:=S_n\times_SY_n$. 

To construct $Y_n$, consider the universal $G_n$-torsor $\fT_n\to T_n\times_SX$.
Now set $Y_n:=\Res (\fT_n)$, where 
%$\fT_n\to T_n\times_SX$ is the universal $G_n$-torsor and 
\[
\Res : \{\mbox{schemes over }T_n\times_SX\}\to \{\mbox{schemes over }X\}
\]
is the Weil restriction functor. In other words, $Y_n$ is the scheme over $X$ such that for any 
$X$-scheme $X'$ one has 
\[
\Mor_X(X',Y_n):=\Mor_{T_n\times_SX}(T_n\times_SX', \fT_n).
\]
The fiber of $Y_n$ over any geometric point $\bar s\to S$ equals the fiber product 
of the $G_n$-torsors over $X_{\bar s}$ corresponding to all points of $(\fT_n)_{\bar s}$, so
$Y_n$ has property $(*)$. We will show that property $(**)$ also holds.
%the desired property of $Y_I$ is clear.

Let $\eta\in S$ be the generic point and $\bar\eta\to\eta$ a geometric point. 
In property $(**)$ we take $\bar x$ to be the composition
$\bar\eta\to\eta\mono S\buildrel{\sigma}\over{\longrightarrow}  X$.
Let $\pi_1^t (X_{\eta},\bar x)$ denote the tame fundamental group (``tame" means
``tamely ramified along $D_{\eta}$"). Let $H_n\subset \pi_1^t (X_{\eta},\bar x)$
be the kernel of the action of $\pi_1^t (X_{\eta},\bar x)$ on $(X_n)_{\bar x}$.
Since $X$ was assumed normal the map
$\pi_1 (X_{\eta},\bar x)\to\pi_1 (X,\bar x)$ is surjective, so to prove
property $(**)$ it suffices to check that the quotient 
$\pi_1^t (X_{\eta},\bar x)/\bigcap\limits_n H_n$ is almost pro-$\ell$.

We have an exact sequence
\[
0\to K\to\pi_1^t (X_{\eta},\bar x)\to\Gamma\to 0, \quad 
K:=\pi_1^t (X_{\bar\eta},\bar x), \quad \Gamma:=\pi_1 (\eta ,\bar\eta ) 
\]
and a splitting $\sigma_*:\Gamma\to\pi_1^t (X_{\eta},\bar x)$. Thus 
$\pi_1^t (X_{\eta},\bar x)$ identifies with the semidirect product
$\Gamma\ltimes K$. The subgroup $H_n\subset \pi_1^t (X_{\eta},\bar x)$ identifies
with $\Gamma_n\ltimes K_n$, where $K_n$ is the intersection of the kernels of all homomorphisms $K\to GL(r, O/\fm^n )$ and $\Gamma_n$ is the kernel of the action of $\Gamma$
on $K/K_n$.

The group $K$ is topologically finitely generated, so the group $K':=K/\bigcap\limits_n K_n$ is almost pro-$\ell$. Thus it remains to show that the quotient $\Gamma/\bigcap\limits_n \Gamma_n$
is almost pro-$\ell$. Since $\bigcap\limits_n \Gamma_n\supset\Ker (\Gamma\to\Aut K')$ it
suffices to show that the group $\Aut K'$ equipped with the compact-open topology is
an almost pro-$\ell$-group. This follows from the fact that $K'$ is topologically finitely generated
and almost pro-$\ell$. Indeed, if $V\subset K'$ is the maximal open normal pro-$\ell$-subgroup
then the automorphisms of $K'$ that act as identity on the finite groups
$K'/V$ and $H_1 (V,\BZ/\ell\BZ)$ form an open pro-$\ell$-subgroup of $\Aut K'$.
\end{proof}

\begin{rem}
Suppose that $X$ is irreducible. Lemma~\ref{lem1} says that for each $f\in\widetilde{\LS}_r(X)$
satisfying certain conditions there exists a non-empty open $U\subset X$ and a normal subgroup
$H\subset\pi_1 (X)$ such that $f|_U$ saitisfies the condition from
Definition~\ref{d:LS'}. The proof of Lemma~\ref{lem1} shows that one can choose $U$ and $H$
to be independent of $f$. This is not surprising: Theorem~\ref{Mainth} will show that one can
take $U$ to be the set of regular points of $X_{\red}$ and $H$ to be the intersection of the kernels of all homomorphisms $\pi_1(X)\to GL(r,O)$ satisfying a tameness condition.
\end{rem}

\section{Proof of Proposition~\ref{p:b} (after Moritz Kerz)}    \label{s:proofb}
%{\bf Should I replace $X$ by $U$ in this section?}
We can assume that $X$ is irreducible.
The problem is to show that if $f\in\LS'_r(X)$ then $f\in\LS_r(X)$, i.e.,
$f$ comes from a representation $\rho :\pi_1 (X)\to GL(r,E_{\lambda})$. 
The original proof of Proposition~\ref{p:b} can be found in version 5 of the 
e-print \cite{Dr} (the idea was to first construct the character of $\rho$ using elementary representation theory and a compactness argument). The simpler proof given in 
\S\ref{ss:constructing}-\ref{ss:Faltings} is due to M.~Kerz. In \S\ref{ss:compactness} we give a variant of his proof, in which Lemma~\ref{l:Faltings} is replaced with a compactness argument. 

Recall that the set of closed points of $X$ is denoted by $|X|$.
For each $x\in |X|$ we have the geometric Frobenius $F_x$, which is a conjugacy class in
$\pi_1 (X)$.

\subsection{Constructing a representation of $\pi_1(X)$.} \label{ss:constructing}
Let $H\subset \pi_1 (X)$ be a closed normal subgroup satisfying properties (a)-(b) from
Definition~\ref{d:LS'} with respect to our $f\in\LS'_r(X)$. By Proposition~\ref{p:Hilbert-pro-l}, there exists an irreducible regular arithmetic curve $C$ with a morphism $\varphi :C\to X$ such that the map $\varphi_*:\pi_1(C)\to\pi_1 (X)/H$ is surjective. Take any such pair $(C,\varphi )$. Then
$\varphi^* (f)$ comes from a semisimple representation $\rho_C:\pi_1(C)\to GL(r,E_{\lambda})$.
After an appropriate conjugation, $\rho_C$ becomes a homomorphism 
$\pi_1(C)\to GL(r,O)$. 

\begin{lem}   \label{l:Kerz1}
$\Ker\rho_C \supset H_C$, where $H_C:=\Ker (\varphi_*:\pi_1(C)\to\pi_1 (X)/H)$.
\end{lem}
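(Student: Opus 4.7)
The plan is to first show that $\rho_C(g)$ is unipotent for every $g\in H_C$, and then to upgrade unipotence to triviality by combining the normality of $H_C$ in $\pi_1(C)$ with the semisimplicity of $\rho_C$.

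Fix a nonzero ideal $I\subset O$. By property (b) of $H$ there is an open subgroup $V_I\subset\pi_1(X)$ containing $H$ such that the pullback of $f$ to $X_{V_I}$ is trivial modulo $I$. The triviality-mod-$I$ property is preserved when $V_I$ is shrunk to a smaller open subgroup: Cayley--Hamilton applied to the companion matrix of $f_y$ shows that if $\prod(1-\beta_i t)\equiv (1-t)^r\pmod I$ then $\prod(1-\beta_i^d t)\equiv (1-t)^r\pmod I$ for every $d\ge 1$. Replacing $V_I$ by the intersection of its finitely many $\pi_1(X)$-conjugates, I therefore may assume $V_I$ is normal. Set $W_I:=\varphi_*^{-1}(V_I/H)\subset\pi_1(C)$. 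Since $\varphi_*$ is surjective, $W_I$ is open and normal in $\pi_1(C)$ and contains $H_C$, and the image of $\varphi^*f$ in $\widetilde{\LS}_r(C_{W_I})$ is trivial modulo $I$, i.e.\ $\det(1-\rho_C(F_z)t)\equiv (1-t)^r\pmod I$ for every closed point $z$ of the regular arithmetic curve $C_{W_I}$.

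The \v{C}ebotarev density theorem for $C_{W_I}$ says that the Frobenius conjugacy classes $\{F_z\}_{z\in|C_{W_I}|}$ are dense in the conjugacy classes of $\pi_1(C_{W_I})=W_I$. Since $h\mapsto\det(1-\rho_C(h)t)$ is a continuous class function valued in $O[t]$ and the condition ``$\equiv (1-t)^r\pmod I$'' cuts out a closed subset, the congruence extends from Frobenius elements to every $h\in W_I$. In particular, for each $g\in H_C\subset W_I$ and each nonzero $I$, $\det(1-\rho_C(g)t)\equiv (1-t)^r\pmod I$. Letting $I$ range over powers of $\fm$ and using $\bigcap_n\fm^n=0$ gives $\det(1-\rho_C(g)t)=(1-t)^r$, so $\rho_C(g)$ is unipotent for every $g\in H_C$.

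Put $V:=\overline{E}_\lambda^r$. By Kolchin's theorem, any subgroup of $GL(V)$ all of whose elements are unipotent fixes a nonzero vector; applied to $\rho_C(H_C)$ this yields $V^{H_C}\ne 0$. Normality of $H_C$ in $\pi_1(C)$ makes $V^{H_C}$ a $\pi_1(C)$-subrepresentation, and semisimplicity of $\rho_C$ supplies a $\pi_1(C)$-stable complement $V'$. If $V'\ne 0$ then Kolchin applied to the still-unipotent action of $H_C$ on $V'$ would produce a nonzero $H_C$-fixed vector in $V'$, contradicting $V'\cap V^{H_C}=0$. Hence $V'=0$, so $V^{H_C}=V$, which is the desired $H_C\subset\Ker\rho_C$. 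I expect the technical weight of the argument to sit in the \v{C}ebotarev-plus-continuity passage from ``characteristic polynomials are $(1-t)^r\pmod I$ on Frobenius'' to ``the same for all $h\in W_I$''; the final Kolchin/normality/semisimplicity step is then essentially formal.
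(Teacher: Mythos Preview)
Your proof is correct and follows essentially the same two-step strategy as the paper: first establish that $\rho_C(h)$ is unipotent for all $h\in H_C$ via property~(b), \v{C}ebotarev density, and continuity; then upgrade unipotence to triviality using the normality of $H_C$ and the semisimplicity of $\rho_C$. The paper phrases the second step more tersely by invoking Clifford's theorem (the restriction of a semisimple representation to a normal subgroup is semisimple, and a semisimple representation by unipotent operators is trivial), whereas you spell it out via Kolchin and an explicit complement argument; these are equivalent. Your detour through Cayley--Hamilton to make $V_I$ normal is harmless but unnecessary: taking $W_I:=\tilde\varphi_*^{-1}(V_I)\subset\pi_1(C)$ directly (with $\tilde\varphi_*:\pi_1(C)\to\pi_1(X)$), one already has an open subgroup containing $H_C$ with a map $C_{W_I}\to X_{V_I}$, so the triviality-mod-$I$ pulls back without further work, and \v{C}ebotarev on $C_{W_I}$ applies regardless of whether $W_I$ is normal in $\pi_1(C)$.
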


\begin{proof}
Since $\rho_C$ is semisimple and $H_C$ is normal the restriction of $\rho_C$ to $H_C$
is semismple. So it remains to show that $\rho_C (h)$ is unipotent for all $h\in H_C$.

Property (b) from Definition~\ref{d:LS'} implies that for every nonzero ideal $I\subset O$ there exists an open normal subgroup $U_I\subset\pi_1 (C)$ such that
for every $c\in |C|$ with $F_c\in U_I$ the polynomial $\det (1-t\rho_C (F_c))$ is congruent to
$(1-t)^r$ modulo $I$. So by \v {C}ebotarev density and continuity of $\rho_C$, 
\begin{equation}   \label{e:Kerz1}
\det (1-t\rho_C (h))\equiv (1-t)^r\mbox{ mod } 
I \,\mbox{ for all } h\in U_I \,.
\end{equation}
But $H_C$ is contained in each of the $U_I$'s, so \eqref{e:Kerz1} implies that
$\rho_C (h)$ is unipotent for all $h\in H_C$.
\end{proof}

By Lemma~\ref{l:Kerz1}, we can consider $\rho_C$ as a homomorphism 
$\pi_1 (X)/H\to GL(r,O)$. By construction, the equality
\begin{equation}   \label{e:Kerz2}
\det (1-t\rho_C (F_x))=f_x(t)
\end{equation}
holds if $x\in\varphi (|C|)$ and $\varphi^{-1}(x)$ contains a point whose residue field is equal to
that of $x$. To prove Proposition~\ref{p:b}, we will now show that \eqref{e:Kerz2} holds for 
\emph{all} $x\in |X|$.

\subsection{Using a lemma of Faltings} \label{ss:Faltings}
\begin{rem}   \label{r:Kerz}
The proof of Proposition~\ref{p:Hilbert-pro-l} shows that the group 
$\pi_1 (X)/H=\pi_1(C)/H_C$ is topologically finitely generated.
\end{rem}

\begin{lem}  \label{l:Faltings}
\begin{enumerate}
\item[(i)] 
There exists a finite subset $T\subset |C|$ such that any semisimple representations
$\rho_1,\rho_2 :\pi_1(C)/H_C\to GL(r,E_{\lambda})$ with 
\[
\Tr\rho_1 (F_c)=\Tr\rho_2 (F_c) \;\mbox{ for all } c\in T
\] 
are isomorphic.

\item[(ii)] 
If $C$ is flat over $\BZ$ this is true for $\pi_1(C)$ instead of $\pi_1(C)/H_C\,$.
\end{enumerate}
\end{lem}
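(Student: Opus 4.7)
The plan is to reduce both parts to a finiteness principle of Faltings: if $G$ is a profinite group having only finitely many open subgroups of any given index, then for each $n$ there is a finite subset $\Sigma\subset G$ such that any two continuous semisimple representations $\rho_1,\rho_2\colon G\to GL(n,E_\lambda)$ with $\Tr\rho_1(g)=\Tr\rho_2(g)$ for every $g\in\Sigma$ are isomorphic. This hypothesis on $G$ holds in case (i) because $\pi_1(C)/H_C$ is topologically finitely generated by Remark~\ref{r:Kerz}, and in case (ii) because the connected components of a regular arithmetic curve $C$ flat over $\BZ$ are of the form $\Spec O_{K,S}$ for number fields $K$ and finite sets $S$ of places, so $\pi_1(C)=\Gal(K^S/K)$ satisfies the property by Hermite--Minkowski (there are only finitely many extensions of $K$ of bounded degree unramified outside $S$).

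For the Faltings principle itself I would argue as follows. Given $\rho_1,\rho_2$, set $\rho:=\rho_1\oplus\rho_2$; after conjugation $\rho$ takes values in $GL(2n,O)$. Let $M_\rho\subset M_{2n}(O)$ be the $O$-subalgebra generated by $\rho(G)$; since $\rho(G)$ is closed under multiplication, $M_\rho$ coincides with the $O$-linear span of $\rho(G)$, and it is a finitely generated $O$-module. If $\{\rho(g):g\in\Sigma\}$ generates $M_\rho$ as an $O$-module, then writing $\rho(h)=\sum_{g\in\Sigma}a_g(h)\rho(g)$ with $a_g(h)\in O$ and projecting onto each summand yields $\Tr\rho_i(h)=\sum_g a_g(h)\Tr\rho_i(g)$ for $i=1,2$, so matching traces on $\Sigma$ forces matching traces on all of $G$, whence $\rho_1\cong\rho_2$ by Brauer--Nesbitt. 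Denoting by $\fm\subset O$ the maximal ideal, Nakayama reduces $O$-module generation of $M_\rho$ to generation of $M_\rho/\fm M_\rho$ by images of $\rho(g)$'s; uniformity in $\rho$ is then achieved by observing that, under our hypothesis on $G$, there are only finitely many isomorphism classes of continuous $\bar\rho\colon G\to GL(2n,O/\fm)$, each giving a finite-dimensional algebra whose generators can be lifted, so the union of finitely many finite generating sets yields one $\Sigma$ that works for every $\rho$.

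Finally, to produce $T\subset|C|$ from $\Sigma$, I would note that only the image of each $g\in\Sigma$ in the finite quotient $G/V$ matters, where $V$ is the intersection of the kernels of the finitely many relevant mod-$\fm$ representations $\bar\rho$; this $V$ is open and normal, and \v{C}ebotarev density ensures that every conjugacy class in $G/V$ contains the image of some Frobenius $F_c$, so I can replace each $g\in\Sigma$ by an appropriate $F_c$. The main obstacle I anticipate is executing the mod-$\fm$ step carefully---the map $M_\rho/\fm M_\rho\to M_{2n}(O/\fm)$ can fail to be injective, and one must choose the lattice $M_\rho$ (e.g., as the saturation of $O[\rho(G)]$ in $M_{2n}(O)$) and the set $\Sigma$ so that Nakayama genuinely applies to all $\rho$ simultaneously; this uniform choice of $\Sigma$ before fixing any specific $\rho$ is the technical heart of Faltings' lemma being invoked here.
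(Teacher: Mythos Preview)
Your proposal is correct and matches the paper's proof, which simply cites Faltings (\cite[Satz~5]{Fa}, \cite[Theorem~3.1]{DeF}) for (ii), observes that the only input needed is the finiteness of homomorphisms from the group to any fixed finite group, and invokes Remark~\ref{r:Kerz} to transfer the argument to~(i). The obstacle you flag in the mod-$\fm$ step is resolved exactly as in Faltings' original argument: since $M_\rho$ has $O$-rank at most $4r^2$, the finite group $(M_\rho/\fm M_\rho)^\times$ has order bounded independently of $\rho$, so the map $G\to (M_\rho/\fm M_\rho)^\times$ factors through one fixed open normal subgroup $V\subset G$, and any $\Sigma$ surjecting onto $G/V$ (hence any set of Frobenii chosen by \v{C}ebotarev to do so) works uniformly for every~$\rho$.
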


\begin{proof}
Statement (ii) is due to Faltings (see \cite[Satz 5]{Fa} or \cite[Theorem 3.1]{DeF}). The proof of (ii) 
uses only the finiteness of the set of homomorphisms from $\pi_1(C)$ to any fixed finite group
(which holds if $C$ is flat over $\BZ$). So by Remark~\ref{r:Kerz}, the same argument proves (i).
\end{proof}

Let $T$ be as in Lemma~\ref{l:Faltings}(i). We have to show that \eqref{e:Kerz2} holds for any 
$x\in |X|$. By Proposition~\ref{p:Hilbert-pro-l}, there exists an irreducible regular arithmetic curve $C'$ with a morphism $\varphi' :C'\to X$ such that the map
$\varphi'_*:\pi_1(C')\to\pi_1 (X)/H$ is surjective and for each $y\in T\cup\{ x\}$ there exists
a point $z\in (\varphi')^{-1}(y)$ whose residue field is equal to that of $y$. Applying the argument of
\S\ref{ss:constructing} to $(C',\varphi')$ we get a semisimple representation
$\rho_{C'}:\pi_1 (X)/H\to GL(r,E_{\lambda})$ such that $\det (1-t\rho_C (F_y))=f_y(t)$
for each $y\in T\cup\{ x\}$. By the above choice of $T$, this implies that the representations 
$\pi_1(C)\to GL(r,E_{\lambda})$ corresponding to $\rho_C$ and $\rho_{C'}$ are isomorphic.
So $\rho_C\simeq\rho_{C'}$ and therefore $\det (1-t\rho_C (F_x))=f_x(t)$, QED.

\subsection{Using a compactness argument} \label{ss:compactness}
Instead of referring to the  lemma of Faltings, one can finish the proof of
Proposition~\ref{p:b} as follows. We have to prove the existence of a homomorphism
$\rho :\pi_1 (X)\to GL(r,O)$ such that 
\begin{equation}   \label{e:Kerz3}
\det (1-t\rho (F_x))=f_x(t)
\end{equation}
for all $x\in |X|$. By Remark~\ref{r:Kerz}, the group $\pi_1 (X)/H$ is topologically finitely generated.
% (see the proof of Proposition~\ref{p:Hilbert-pro-l}).
So the set
\[
Z:=\Hom (\pi_1 (X)/H, GL(r,O))= \leftlim_{I\ne 0} \Hom (\pi_1 (X)/H, GL(r,O/I))
\]
equipped with the topology of projective limit is compact.

For each $x\in |X|$ let $Z_x\subset Z$ denote the set of all homomorphisms
$\rho: \pi_1 (X)/H\to GL(r,O)$ satisfying \eqref{e:Kerz3}.
Then each $Z_x$ is a closed subset of %the compact space 
$Z$. We have to show that the intersection of these subsets is non-empty.
But $Z$ is compact, so it suffices to prove that 
for any finite subset $S\subset |X|$ the set
\begin{equation}   \label{e:Kerz4}
\bigcap_{x\in S}Z_x %\ne\emptyset .
\end{equation}
%the intersection of $Z_x$ for $x\in S$ is non-empty. 
%for any \emph{finite} subset $S\subset |X|$.
is non-empty. By Proposition~\ref{p:Hilbert-pro-l}, there exists an irreducible regular arithmetic curve $C$ with a morphism $\varphi :C\to X$ and a scheme-theoretical section $S\to X$ such that the map $\varphi_*:\pi_1(C)\to\pi_1 (X)/H$ is surjective. The set \eqref{e:Kerz4} contains the 
homomorphism $\rho_C: \pi_1 (X)/H\to GL(r,O)$ constructed in \S\ref{ss:constructing}, so it is 
non-empty.
%So \eqref{e:Kerz4} holds by \S\ref{ss:constructing}.
%
%\bigskip
%
%Set $Z:= \Hom (\pi_1 (X)/H , GL(r,O))$, then $Z$ is the projective limit of
%the sets $Z_I:= \Hom (\pi_1 (X)/H , GL(r,O/I)$, each of which is finite.
%So $Z$ is compact if equipped with the projective limit topology.
%
%and the intersection
%of any finite collection of these closed subsets is non-empty. So the
%intersection of all of them is non-empty, QED.

\section{Proof of Proposition~\ref{p:c}} \label{s:proofc} 

\subsection{A specialization lemma}   \label{ss:specialization_lemma} 
%We will use the following notation. If $X$ is a scheme of finite type over $\BZ$ and $x\in X$ is
%a closed point with residue field $k_x$ we set $\bar x:=\Spec \bar k_x$ and 
%$T_{\bar x}X:=T_xX\otimes_{k_x}\bar k_x$, where $T_xX:=(\fm_x/\fm_x^2)^*$ is the tangent space.
%%%(which may include ``arithmetical directions"). 
%We write $X_{\bar x}$ for the strict Henselization of $X$ at $\bar x$.
We will need the following elementary lemma. (For deeper statements in this spirit, see \cite[\S2.4]{W1}, \cite[Lemma 2.1 and Proposition 2.3]{KS1}, \cite[Lemma 2.4]{KS2}.)

\begin{lem}       \label{l:specialization} 
Let $X$ be a regular scheme of finite type over $\BZ$, $D\subset X$ an irreducible divisor,
$G$ a finite group, and $\pi :Y\to X\setminus D$ a $G$-torsor ramified at $D$. Then there
exists a closed point $x\in D$ and a 1-dimensional subspace $l$ of the tangent space $T_xX:=(\fm_x/\fm_x^2)^*$
with the following property: 
\begin{enumerate}
\item[$(\bigstar)$]
if $C\subset X_{\bar x}$ is any regular 1-dimensional closed subscheme tangent to $l$ such that
$C\not\subset D_{\bar x}$ then the pullback of $\pi :Y\to X\setminus D$ to $C\setminus \{ \bar x\}$ 
is ramified at $\bar x$.
\end{enumerate}
Here $\bar x$ is a a geometric point corresponding to $x$ and $X_{\bar x}$, $D_{\bar x}$ are the strict Henselizations.
% of $X$ and $D$ at a geometric point $\bar x$ corresponding to $x$.
\end{lem}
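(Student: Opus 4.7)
The plan is to work locally at a generically chosen closed point of $D$ and to translate the ramification of $\pi$ along $D$ into ramification of the pulled-back cover at $\bar x$, splitting the argument according to whether the inertia along $D$ is tame or wild.

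First I would shrink $X$ to an open neighborhood of the regular locus of $D$, so that $D$ itself may be assumed regular; this is permissible since only a single closed point $x$ is sought. Let $\eta$ be the generic point of $D$ and let $I_\eta \subseteq G$ denote the inertia group of the $G$-torsor at $\eta$; by hypothesis $I_\eta \ne 1$. The next step is to pick a closed point $x \in D$ generic enough that the local inertia of $\pi$ at the divisor $V(f) \subset \Spec \mathcal{O}^{sh}_{X,\bar x}$ (where $f$ is a local equation of $D$, extendable to a regular system of parameters at $\bar x$) is still a conjugate of $I_\eta$, hence nontrivial. If the inertia is tame, any closed point of $D$ works. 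If it is wild (which happens only when $D$ lies in a positive-characteristic fiber of $X \to \Spec\BZ$), one must avoid the vanishing locus $Z\subsetneq D$ of a refined Swan-type residue, which is a proper closed subscheme of $D$.

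With $x$ fixed, choose a line $l \subset T_xX$ with $l \not\subset T_xD$, i.e., a line not contained in the hyperplane cut out by $df$. For any regular one-dimensional closed subscheme $C \subset X_{\bar x}$ tangent to $l$ with $C \not\subset D_{\bar x}$, set $B := \mathcal{O}_{C,\bar x}$, a DVR; the condition $l \not\subset T_xD$ ensures that the image of $f$ in $B$ is a uniformizer (up to a unit). The pullback of $\pi$ lives over $\Spec\operatorname{Frac}B = \Spec B \setminus \{\bar x\}$, and the task is to show that it does not extend to an \'etale cover of $\Spec B$. In the tame case this follows from Abhyankar's lemma: after a tame \'etale base change the cover is an $m$-th root of $f$ for $m = |I_\eta|$, which pulls back to an $m$-th root of a uniformizer of $B$, manifestly ramified. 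In the wild case it follows from an explicit Artin--Schreier--Witt computation (equivalently, from the functoriality of Kato's refined Swan conductor under the specialization $\mathcal{O}^{sh}_{X,\bar x}\to B$), using that we chose $x \notin Z$.

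The main obstacle is the wild case. Tame ramification is preserved under any transverse base change, so the tame part is essentially Abhyankar's lemma. In the wild case, however, pullback along a transverse curve meeting $D$ at a point where the refined Swan form vanishes can \emph{collapse} the wild ramification, as one sees in Artin--Schreier examples such as $t^p-t=g/f$ where $g|_D$ has zeros: at such a bad point $x$ no direction $l$ yields ramification. Ensuring $x \notin Z$ is the key genericity condition that prevents this collapse; once $x$ is chosen away from $Z$, any line $l \not\subset T_xD$ works, and the lemma follows.
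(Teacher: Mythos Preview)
Your outline differs substantially from the paper's proof, and the wild case as you have written it does not go through.

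The paper's argument is elementary and self-contained. First it passes to an \'etale neighborhood where the inertia $I_\eta$ equals the whole group $G$; since inertia groups are solvable, it then reduces to $|G|=p$ prime by replacing $Y$ with $Y/H$ for a normal subgroup $H$ of prime index. With $|G|=p$ it looks at the normalization $\bar Y\to X$ and the reduced preimage $\tilde D=(\bar\pi^{-1}(D))_{\red}$, and splits according to whether the residue degree $e_1=[\tilde D:D]$ or the multiplicity $e_2$ equals $p$. In Case~1 ($e_1=1$, $e_2=p$), any closed point $x\in D$ and any $l\not\subset T_xD$ works: transversality to $\im(T_{\tilde x}\bar Y\to T_xX)$ forces $C\times_X\bar Y$ to be regular with connected special fiber, hence totally ramified. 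In Case~2 (the \emph{fierce} case $e_1=p$, $e_2=1$) the paper takes $l\subset T_xD$ avoiding a proper conic subset $F\subset TD$ (the image of $d\bar\pi_D$); again this makes $l$ transverse to $\im(T_{\tilde x}\bar Y\to T_xX)$. No Swan--conductor machinery is used.

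Your proposal has two gaps. First, you never reduce to cyclic $G$: Abhyankar and Artin--Schreier--Witt descriptions apply to cyclic covers, so without that reduction the tame and wild steps are not justified. Second, and more seriously, your handling of the wild case does not separate the fierce situation from the non-fierce one, and the claim ``any $l\not\subset T_xD$ works once $x\notin Z$'' is not supported by the refined Swan conductor as you invoke it. Consider the $\BZ/p\BZ$-cover $u^p-u=s/t^p$ of $\BA^2_{\BF_p}$ ramified along $D=\{t=0\}$: this is fiercely ramified, its Kato residue is $ds\in\Omega^1_X|_D$, which is nowhere vanishing on $D$, yet at $x=(0,0)$ the transverse direction $l=\langle\partial_t\rangle$ fails (the curve $\{s=0\}$ pulls back to the trivial torsor). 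The point $x=(0,0)$ is genuinely bad, but it is not detected by the vanishing of $\mathrm{rsw}$ as a section of $\Omega^1_X|_D$; what matters is the component of $\mathrm{rsw}$ in the normal direction $\Omega^1_X|_D/\Omega^1_D$, and in the fierce case that component is identically zero. So the ``vanishing locus $Z$'' you describe is either empty or all of $D$, and either way your argument breaks. A correct version along your lines would require a finer invariant and a more careful case analysis---which is precisely what the paper's normalization argument sidesteps.
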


\begin{proof}
We will show that {\em at least one} of the following statements (a) and (b) is true:
\begin{enumerate}
\item[(a)] property $(\bigstar)$ holds for any closed point $x$ of some non-empty open
subset $U\subset D$ and for any $l\not\subset T_xD$; 

\item[(b)] $D$ is a variety over a finite field and there exists a conic closed subset $F$ of the
tangent bundle $TD$ such that $F\ne TD$ and property $(\bigstar)$ holds whenever
$l\subset T_xD$ and $l\not\subset F$.
\end{enumerate}
Let $I\subset G$ be the inertia subgroup at the generic point of $D$.
To prove that (a) or (b) holds, we can replace $X$ by any scheme $X'$ etale over $X$ with 
$D\times_XX'\ne\emptyset$ and replace $Y$ by $Y\times_XX'$. So we can assume that 
$I=G$. Then $G$ is solvable (because $I$ is). So we can
assume that $|G|$ is a prime number $p$ (otherwise replace $Y$ by $Y/H$, where $H\subset G$
is a normal subgroup of prime index).

Let $\bar Y$ be the normalization of $X$ in the ring of fractions of $Y$. We have a finite morphism
$\bar\pi :\bar Y\to X$ and an action of $G$ on $\bar Y$ such that $\bar Y/G=X$. 
After shrinking $X$ we can assume that $\bar Y$ is regular.

Set $\tilde D:=(\bar\pi^{-1}(D))_{\red}$. The assumption $I=G$ means that the action of $G$ on
$\tilde D$ is trivial and the morphism $\bar\pi_D:\tilde D\to D$ is purely inseparable. 
Let $e_1$ be its degree and let $e_2$ be the multiplicity of $\tilde D$ in the divisor $\bar\pi^{-1}(D)$. Then $e_1e_2=|G|=p$, so $e_1$ equals 1 or $p$.

Case 1: $e_1=1$, $e_2=p$. Let us show that (a) holds for $U=D$. Since $e_1=1$ the 
morphism $\bar\pi_D:\tilde D\to D$ is an isomorphism. So if $l$ is as in (a) and
$C\subset X_{\bar x}$ is any regular 1-dimensional closed subscheme tangent to $l$
then $T_{\bar x}C$ is transversal to the image of the tangent map $T_{\bar\pi^{-1}(x)}\bar Y\to T_xX$. Therefore $C\times_X\bar Y$ is regular. On the other hand, the fiber of 
$\bar\pi$ over $\bar x$ has a single point. So the pullback of 
$\pi :Y\to X\setminus D\,$ to $C\setminus \{ \bar x\}$ is indeed ramified at $\bar x$.

Case 2: $e_1=p$, $e_2=1$. Then $D$ and $\tilde D$ are varieties over $\BF_p$.
After shrinking $X$ we can assume that $D$ and $\tilde D$ are smooth and the differential
of the morphism $\bar\pi_D: \tilde D\to D$ has constant corank~$\delta$. Since $e_1=p$ one has 
$\delta =1$. The images of the differential of $\bar\pi_D$ at various points of $\tilde D$ form a vector subbundle of $(TD)\times_D\tilde D$ of codimension 1. Let $F\subset TD$ be its image; then $F$ is  a conic closed subset. We will show that (b) holds for this $F$. Let $x\in D$ be a closed point and 
$H_x:=\im \, (T_{\bar\pi^{-1}(x)}\bar Y\to T_xX)$. Since $\delta =1$ and $e_2=1$ one has 
$\codim H_x=1$ . So if $l$ is as in (b) then $l$ is transversal to $H_x$. Now one can finish the argument just as in Case 1.
\end{proof}

\begin{cor}       \label{c:specialization} 
Let $X$ be a regular scheme of finite type over $\BZ [\ell^{-1}]$ and
$U\subset X$ a dense open subset. Let $\E $ a lisse $E_{\lambda}$-sheaf on $U$. 
Suppose that $\E $ does not extend to a lisse $E_{\lambda}$-sheaf on $X$. 
Then there exists a closed point $x\in X\setminus U$ and a 1-dimensional subspace 
$l\subset T_xX$
with the following property: 
%for every regular arithmetical curve $C$ and every
\begin{enumerate}
\item[$(*)$]
if $C$ is a regular arithmetic curve, $c\in C$ a closed point, and 
$\varphi :(C, c )\to (X, x)$ a morphism 
with $\varphi^{-1}(U)\ne\emptyset$ such that  the image of the tangent map 
$T_ cC\to T_xX\otimes_{k_x}k_c$ equals $l\otimes_{k_x}k_c$
then the pullback of $\E$ to $\varphi^{-1}(U)$ is ramified at $c$.
\end{enumerate}
(Here $k_x$ and $k_c$ are the residue fields.)
\end{cor}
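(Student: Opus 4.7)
\medskip

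\noindent\textbf{Proof plan for Corollary~\ref{c:specialization}.}

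The plan is to reduce the problem to an application of Lemma~\ref{l:specialization} for a suitable finite torsor extracted from $\E$. The first step is to locate an irreducible divisor $D\subset X\setminus U$ along which $\E$ is genuinely ramified. Let $U'\supset U$ be the maximal open subscheme of $X$ to which $\E$ extends as a lisse $E_{\lambda}$-sheaf. Because $X$ is regular and $X$ is of finite type over $\BZ[\ell^{-1}]$ (so residue characteristics differ from $\ell$), Zariski--Nagata purity for the tame/$\ell$-adic fundamental group implies that $X\setminus U'$ has pure codimension~$1$ if it is non-empty; since by assumption $\E$ does not extend to $X$, there exists an irreducible divisor $D\subset X\setminus U'$ where $\E$ is ramified. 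After shrinking $X$ we may assume $X\setminus U'=D$, so $\E$ is lisse on $X\setminus D$ and its inertia representation at the generic point of $D$ is non-trivial.

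The second step is to replace $\E$ by a finite torsor that still sees this ramification. Choose a $\pi_1(X\setminus D)$-stable $O$-lattice in the $E_\lambda$-vector space underlying $\E$; this yields a continuous representation $\rho:\pi_1(X\setminus D)\to GL(r,O)$ whose image of the inertia group at the generic point of $D$ is non-trivial. Because the inertia image is a closed subgroup of $GL(r,O)$ and $\bigcap_n \fm^n=0$, for some $n$ the reduction $\bar\rho_n:\pi_1(X\setminus D)\to G_n:=GL(r,O/\fm^n)$ has non-trivial image on inertia at $D$. Let $G$ be the image of $\bar\rho_n$ and let $Y\to X\setminus D$ be the corresponding $G$-torsor; by construction $Y$ is ramified along $D$.

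The third step is to apply Lemma~\ref{l:specialization} to $(X,D,G,Y)$, obtaining a closed point $x\in D$ and a $1$-dimensional subspace $l\subset T_xX$ satisfying condition $(\bigstar)$ at the strictly henselian level, and then to transfer this conclusion to property $(*)$. Given $\varphi:(C,c)\to (X,x)$ as in the statement of the corollary with $\varphi^{-1}(U)\neq\emptyset$ and image of the tangent map equal to $l\otimes_{k_x}k_c$, pass to a geometric point $\bar c$ above $c$; the induced map of strict henselisations $\Spec \cO_{C,\bar c}^{sh}\to\Spec \cO_{X,\bar x}^{sh}$ has injective tangent map with image $l\otimes_{k_x}k_{\bar x}$, so its scheme-theoretic image is a regular $1$-dimensional closed subscheme of $X_{\bar x}$ tangent to $l$ and not contained in $D_{\bar x}$ (the latter because $\varphi^{-1}(U)\neq\emptyset$). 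By $(\bigstar)$ the pullback of $Y\to X\setminus D$ to this local curve is ramified at $\bar x$, hence so is the pullback of $Y$ to $\varphi^{-1}(X\setminus D)$ at $\bar c$; a fortiori $\bar\rho_n|_{\pi_1(\varphi^{-1}(U))}$ does not factor through $\pi_1$ of any neighborhood of $c$, and therefore $\rho|_{\pi_1(\varphi^{-1}(U))}$ itself is ramified at $c$, i.e., the pullback of $\E$ is ramified at $c$.

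The main technical obstacle is the transfer from the ``closed subscheme of $X_{\bar x}$ tangent to $l$'' formulation of Lemma~\ref{l:specialization} to the ``morphism $\varphi:(C,c)\to(X,x)$ with prescribed tangent image'' formulation of the corollary; this requires checking that the prescribed tangent condition forces $\varphi$ to be locally a closed immersion after strict henselisation and that the non-emptiness of $\varphi^{-1}(U)$ prevents the image from lying inside $D_{\bar x}$. The appeal to Zariski--Nagata purity is precisely the place where the regularity hypothesis on $X$ (flagged in Remark~\ref{r:Sasha's counterex}(i)) is used.
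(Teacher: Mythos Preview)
Your proposal is correct and follows exactly the paper's approach: Zariski--Nagata purity produces an irreducible divisor $D\subset X$ along which $\E$ is ramified, and Lemma~\ref{l:specialization} then supplies the point $x$ and the line $l$. The paper's proof consists of literally those two sentences; your second and third steps (extracting a finite $G$-torsor from $\E$ by reduction modulo $\fm^n$, and transferring $(\bigstar)$ from closed subcurves of $X_{\bar x}$ to morphisms $(C,c)\to(X,x)$ via strict henselization) are details the paper leaves entirely implicit, and you have handled them correctly. One small note: Zariski--Nagata purity holds for arbitrary regular schemes and requires no hypothesis on residue characteristics, so your parenthetical about $\ell$ is unnecessary at that point.
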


\begin{proof}
By the Zariski-Nagata purity theorem 
\cite[expos\'e~~X, Theorem 3.4]{SGA2}, $\E$ is ramified along some irreducible
divisor $D\subset X$, $D\cap U=\emptyset$. Now use Lemma~\ref{l:specialization}.
\end{proof}

\subsection{Proof of Proposition~\ref{p:c}} \label{ss:proofc} 
Proposition~\ref{p:c} is equivalent to the following lemma.

\begin{lem}  \label{l:c}
Let $X$ be be a regular scheme of finite type over $\BZ [\ell^{-1}]$. Let $U\subset X$ be a 
dense open subset, $\E_U$ a semisimple lisse $E_{\lambda}$-sheaf on $U$, and 
$f_U\in\LS_r(U)\subset\widetilde{\LS}_r(U)$ the class of $\E_U$. Let
$f\in\widetilde{\LS}_r(X)$ be such that  $f|_U=f_U$
%the class of $\E_U$ in $\widetilde{\LS}_r(U)$ equals $f|_U$ 
and $\varphi^*(f)\in\LS_r(C)$ for every regular arithmetic curve $C$ and every morphism 
$\varphi :C\to X$. Then
 \begin{enumerate}
\item[(i)] $\E_U$ extends  to a lisse $E_{\lambda}$-sheaf $\E$ on $X$;
\item[(ii)] the class of $\E$ in $\LS_r(X)\subset\widetilde{\LS}_r(X)$ equals $f$.
\end{enumerate}
\end{lem}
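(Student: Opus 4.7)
The strategy combines Corollary~\ref{c:specialization} (specialization along arithmetic curves), Theorem~\ref{t:Hilbert} (Hilbert irreducibility), and the hypothesis on $\varphi^*(f)$ for arithmetic curves $\varphi: C \to X$, glued together by Chebotarev density.

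For (i), argue by contradiction: if $\E_U$ does not extend to a lisse $E_\lambda$-sheaf on $X$, Corollary~\ref{c:specialization} produces a closed point $x \in X \setminus U$ and a line $l \subset T_xX$ such that every regular arithmetic curve $\varphi: (C,c)\to(X,x)$ with $\varphi^{-1}(U)\neq\emptyset$ and tangent image $l$ has $\varphi^*\E_U$ ramified at $c$. Invoke Theorem~\ref{t:Hilbert}(ii) with $S=\{x\}$ and the prescribed tangent direction~$l$ (taking, e.g., $H=\pi_1(U)$ to trivialize the surjectivity condition) to produce such a curve~$\varphi:C\to X$ with $c=\sigma(x)\in C$, $k_c=k_x$, $\varphi^{-1}(U)\neq\emptyset$, and tangent image~$l$. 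By hypothesis $\varphi^*(f)\in\LS_r(C)$, so it is represented by a (WLOG semisimple) lisse $E_\lambda$-sheaf $\F$ on $C$. Comparing Frobenius characteristic polynomials on the dense open $\varphi^{-1}(U)\subset C$ and applying the \v Cebotarev density theorem on this normal scheme, one obtains $\F|_{\varphi^{-1}(U)}\simeq(\varphi^*\E_U)^{ss}$. Since $\F$ is lisse on all of~$C$, the semisimplification $(\varphi^*\E_U)^{ss}$ is unramified at~$c$. The semisimplicity of $\E_U$ is then used to upgrade this to unramifiedness of $\varphi^*\E_U$ itself at $c$, contradicting the property from Corollary~\ref{c:specialization}.

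For (ii), we must verify $\det(1-F_xt,\E)=f_x(t)$ for each $x\in|X|$; for $x\in|U|$ this is immediate from $f|_U=f_U$ and $\E|_U\simeq\E_U$. Given $x\in X\setminus U$, apply Theorem~\ref{t:Hilbert}(i) with $S=\{x\}$ to produce a regular arithmetic curve $\varphi:C\to X$ with $c=\sigma(x)\in C$, $k_c=k_x$, and $\varphi^{-1}(U)\neq\emptyset$. The hypothesis supplies a semisimple lisse sheaf $\G$ on $C$ representing $\varphi^*(f)$, while $\varphi^*\E$ is lisse on $C$ by part~(i). On the dense open $\varphi^{-1}(U)$ the Frobenius characteristic polynomials of $\G$ and $\varphi^*\E$ agree; \v Cebotarev on the connected normal curve~$C$, together with surjectivity of $\pi_1(\varphi^{-1}(U))\to\pi_1(C)$, upgrades this to $\G^{ss}\simeq(\varphi^*\E)^{ss}$ as sheaves on~$C$. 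Evaluating characteristic polynomials of Frobenius at~$c$ and using $k_c=k_x$ yields $f_x(t)=\det(1-F_ct,\varphi^*\E)=\det(1-F_xt,\E)$.

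The main obstacle sits in part~(i): one must promote the equality $\F|_{\varphi^{-1}(U)}\simeq(\varphi^*\E_U)^{ss}$, which only records semisimplifications, to actual unramifiedness of $\varphi^*\E_U$ at~$c$, since purely unipotent inertia is invisible at the level of characteristic polynomials. This is where the semisimplicity hypothesis on~$\E_U$ enters essentially (so that $\E_U=\E_U^{ss}$), via a Clifford-type analysis of the restriction of the representation $\rho_U:\pi_1(U)\to GL(r,E_\lambda)$ to the normal subgroup $\Ker(\pi_1(U)\twoheadrightarrow\pi_1(X))$: a semisimple representation of a group whose restriction to a normal subgroup is unipotent is trivial on that subgroup, and one should exploit this together with a judicious refinement of the tangent direction chosen from Theorem~\ref{t:Hilbert}(ii) to force the inertia at $c$ to pick up a non-unipotent component of the ramification of $\E_U$.
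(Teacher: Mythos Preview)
Your argument for part~(ii) is essentially the paper's, and is fine. The gap is exactly where you located it, in part~(i): you chose $H=\pi_1(U)$ in Theorem~\ref{t:Hilbert}, throwing away the surjectivity condition, and then you cannot conclude that $\varphi^*\E_U$ is semisimple. Semisimplicity of $\E_U$ says nothing about semisimplicity of its pullback along $\varphi^{-1}(U)\to U$: the image of $\pi_1(\varphi^{-1}(U))$ in $\pi_1(U)$ may be a proper subgroup, and the restriction of a semisimple representation to a non-normal subgroup need not be semisimple. Your proposed Clifford-type salvage does not work either, because neither the inertia subgroup at $c$ nor its image in $\pi_1(U)$ is normal in $\pi_1(U)$, so the principle ``semisimple with unipotent restriction to a normal subgroup $\Rightarrow$ trivial on it'' does not apply.

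The paper's fix is to make the opposite choice of $H$: take $H=\Ker\bigl(\rho:\pi_1(U)\to GL(r,E_\lambda)\bigr)$. Then $\pi_1(U)/H\simeq\im\rho\subset GL(r,O)$ is almost pro-$\ell$, so Proposition~\ref{p:Hilbert-pro-l} (the extension of Theorem~\ref{t:Hilbert} to closed normal subgroups with almost pro-$\ell$ quotient) applies and yields a curve $\varphi:(C,c)\to(X,x)$ with the prescribed tangent direction $l$ \emph{and} with $\varphi_*:\pi_1(\varphi^{-1}(U))\twoheadrightarrow\pi_1(U)/H$ surjective. This surjectivity forces the image of $\pi_1(\varphi^{-1}(U))$ in $GL(r,E_\lambda)$ to coincide with $\im\rho$, so $\varphi^*\E_U$ is semisimple. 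Now the \v Cebotarev comparison with the lisse sheaf on $C$ representing $\varphi^*f$ gives that $\varphi^*\E_U$ itself (not just its semisimplification) is unramified at $c$, contradicting Corollary~\ref{c:specialization}. In short: the surjectivity in Hilbert irreducibility is not a technicality to be discarded---it is precisely the device that transports semisimplicity from $U$ down to the curve.
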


\begin{proof}
(i) Assume the contrary. We can assume that $X$ is irreducible. Choose a closed point 
$x\in X\setminus U$ and a 1-dimensional subspace $l\subset T_{\bar x}X$ satisfying property $(*)$ from Corollary~\ref{c:specialization}.
Let $H\subset\pi_1 (U)$ be the kernel of the representation $\rho :\pi_1 (U)\to GL(r,E_{\lambda})$ 
corresponding to $\E$. The group $\pi_1 (U)/H\simeq\im\rho$ has an open pro-$\ell$-subgroup, so by 
Proposition~\ref{p:Hilbert-pro-l}, there is an irreducible regular arithmetic curve $C$
with a closed point $c\in C$ whose residue field is isomorphic to that of $x$  and a morphism 
$\varphi :(C,c)\to (X,x)$ such that $\varphi^{-1}(U)\ne\emptyset$,
the homomorphism $\varphi_*:\pi_1(\varphi^{-1}(U))\to\pi_1 (U)/H$ is surjective,
and the image of the tangent map $T_cC\to T_xX$ equals $l$. 
The surjectivity of $\varphi_*:\pi_1(\varphi^{-1}(U))\to\pi_1 (U)/H$ implies that the pullback of
$\E$ to $\varphi^{-1}(U)$ is semisimple. So the assumptions on $f$ ensure
that the pullback of $\E$ to $\varphi^{-1}(U)$ has no ramification at $c$. This contradicts
property $(*)$ from Corollary~\ref{c:specialization}.

(ii) Let $f^{?}\in\LS (X)\subset\widetilde{\LS}_r(X)$ be the class of $\E$. We have to show that 
$f^{?}(x)=f(x)$ for every closed point $x\in X$. 
Choose a triple  $(C,c,\varphi )$, where $C$ is a
regular arithmetic curve,  $c\in C$  is a closed point whose residue field is isomorphic to 
that of $x$ and $\varphi :(C,c)\to (X,x)$ is a  morphism such that $\varphi^{-1}(U)\ne\emptyset$.
Then $\varphi^*f^{?},\varphi^*f\in\LS_r(C)$ have equal images in $\LS_r(\varphi^{-1}(U))$.
So $\varphi^*f=\varphi^*f$ and therefore $f^{?}(x)=f(x)$.
\end{proof}

%{\bf Should I formulate the ``analytic continuation" prinicple, which follows from the 
%(generalized) \v {C}ebotarev density theorem? It says that if $U\subset X$ is a 
%dense open subset then the map $\LS_r(X)\to\LS_r (U)$ is injective.}

\begin{rem}   \label{r:geom_semisimplicity}
If $X$ is over $\BF_p$ then in the proof of Lemma~\ref{l:c}(i) the surjectivity of
$\varphi_*:\pi_1(\varphi^{-1}(U))\to\pi_1 (U)/H$ is not essential. Indeed, for our purpose it
suffices to know that the pullback of $\E$ to $\varphi^{-1}(U)$ is \emph{geometrically} semisimple,
and this follows from \cite[Theorem 3.4.1 (iii)]{De} combined with \cite[Remark 1.3.6]{De} and
\cite[Proposition VII.7 (i))]{La}.
\end{rem}

\section{Counterexamples}   \label{s:counterex}
In this section we %show 
give two examples showing
that in Theorem~\ref{Mainth} the regularity assumption on $X$ cannot be replaced by normality.
In both of them one has a normal scheme  $X$ of finite type over $\BZ [\ell^{-1}]$ with a unique singular point $x_0\in X$ and a desingularization $\pi: \hat X\to X$  inducing an isomorphsim
$\pi^{-1} (X\setminus  \{ x_0\})\iso X\setminus  \{ x_0\}$. One also has  an element $f\in\widetilde{\LS}_r(X)$
such that $\pi^*f\in  \LS_r(\hat X)\subset \widetilde{\LS}_r(\hat X)$ but $f\notin\LS_r(X)$.
In the first example $f\notin\LS_r(X)$ because the semisimple lisse $E_{\lambda}$-sheaf  $\E$
on $\hat X$ corresponding to $\pi^*f$ does not descend to $X$. In the second example $\E$
is constant and therefore descends to $X$, but $f(x_0)$ is ``wrong". In both examples the key
point is that $(\pi^{-1} (x_0))_{\red}$ is not normal.

\subsection{First example}  \label{ss:ex1}
\subsubsection{The idea}   \label{ss:theidea}
%Let $X$ be a normal scheme of finite type over $\BZ [\ell^{-1}]$ with a unique singular
%point $x\in X$ and $\pi: \hat X\to X$ a desingularization inducing an isomorphsim
%$\pi^{-1} (X\setminus  \{ x\})\iso X\setminus  \{ x\}$. 
Let $X$, $x_0$, and $\pi: \hat X\to X$ be as above.
Let $C:=(\pi^{-1} (x_0))_{\red}$.
Let $i:C\mono \hat X$ be the embedding. % and $\nu :\hat C\to C$ the normalization.

Now let $\E$ be an $E_{\lambda}$-sheaf  on $\hat X$ of rank $r$.
It defines an element $f_{\E}\in \widetilde{\LS}_r(\hat X)$. Suppose that
%Now suppose that one has a semisimple lisse $E_{\lambda}$-sheaf $\E$ on $\hat X$ of rank $r$
%such that $\nu^*i^*\E$ is constant but $i^*\E$ is not. 
 \begin{enumerate}
\item[(a)] $\E$ is semisimple;
\item[(b)] $i^*\E$ is not geometrically constant;
\item[(c)] for every $c\in |C|$ the polynomial $f(c)\in P_r (O)$ equals the ``trivial" polynomial
$(1-t)^r\in P_r(O)$.
%$i^*f_{\E}\in\widetilde{\LS}_r(C)$ is constant (i.e., $i^*f_{\E}$ is the pullback of an
%element of $ \widetilde{\LS}_r(\{ x_0\} )$).
\end{enumerate}

Define $f\in\widetilde{\LS}_r(X)$ as follows:
$f|_{X\setminus  \{ x_0\}}:=f_{\E}|_{X\setminus  \{ x_0\}}$ and $f(x_0)$ is the %``trivial" 
polynomial $(1-t)^r\in P_r(O)$. Then it is easy to see that $f$ satisfies the conditions of 
Theorem~\ref{Mainth} but $f\notin\LS_r(X)$.

It remains to construct $X$, $\hat X$, $\pi$, $\E$ with the above properties.
Note that $C$ \emph{cannot be normal:} otherwise (c) would imply (b) by \v {C}ebotarev density.

\subsubsection{A construction of $X$, $\hat X$, $\pi$, $\E$ for $r=1$} \label{ss:Sasha}
The following construction was communicated to me by A.~Beilinson.

Let $n\in\BN$. Over $\BF_q$ consider the curve $\BP^1\times (\BZ/n\BZ )$ (i.e., a disjoint union of $n$ copies of $\BP^1$).
Gluing $(\infty ,i)\in\BP^1\times (\BZ/n\BZ )$ with $(0 ,i+1)\in\BP^1\times (\BZ/n\BZ )$ for all
$i\in\BZ/n\BZ$ one gets a curve $C_n$ equipped with a free action of $\BZ/n\BZ$. At least, if
$n=3$ or $n=4$ it is easy to embed $C_n$ into a smooth quasiprojective 
surface $\hat Y $ over $\BF_q$ so that the action of $\BZ/n\BZ$ on $C_n$ extends to $\hat Y $ (take $\hat Y =\BP^2$ if $n=3$ and $\hat Y =\BP^1\times\BP^1$ if $n=4$). We can assume that the action of 
$\BZ/n\BZ$ on $\hat Y $ is free (otherwise replace $\hat Y $ by
an open subset). We can also assume that the intersection matrix of the curve $C_n\subset \hat Y $
is negative definite (otherwise pick a sufficiently big $(\BZ/n\BZ)$-stable finite reduced subscheme
$S$ of the nonsingular part of $C_n$ and replace $\hat Y $ by its blow-up at $S$). Blowing down
$C_n\subset \hat Y $ we get an algebraic surface\footnote{Since our ground field is finite,
this surface is a quasiprojective scheme rather than merely an algebraic space, see 
\cite[Theorem 4.6]{Ar}.} $Y$ with a unique singular point~$y_0$. 

Now let $\hat X$ and $X$ be the quotients of
$\hat Y$ and $Y$ by the action of $\BZ/n\BZ$. The morphism $\hat Y\to Y$ induces a birational morphism $\pi: \hat X\to X$.  Since the action of $\BZ/n\BZ$ on $\hat Y $ is free the surface 
$\hat X$ is smooth.
Finally, let $\E$ be the rank 1 local system on $\hat X$ corresponding to the $(\BZ/n\BZ)$-torsor 
$\hat Y\to\hat X$ and a nontrivial character $\BZ/n\BZ\to \dbQbar_{\ell}^{\times}$.

%The group $\BZ/n\BZ$ acts on $Y$.
%Let $X$ be the quotient and $x\in X$ the image of $y$. 
%We have a morphism $\hat Y /(\BZ/n\BZ)\to X$ such that the preimage of $x$ equals 
%$C_n/(\BZ/n\BZ))$ and the morphism $(\hat Y \setminus C_n)/(\BZ/n\BZ)\to X\setminus \{ x\}$ is an 
%isomorphism. Now let $\E$ be the rank 1 local system on $X\setminus  \{ x\}$ corresponding to the 
%$(\BZ/n\BZ)$-torsor $Y\setminus  \{ y\}\to X\setminus  \{ x\}$ and a nontrivial character 
%$\BZ/n\BZ\to \dbQbar_{\ell}^{\times}$.

%Such $\E$ provides a counterexample (if everything is defined over a finite field).

%\subsection{A related example}
\subsection{Second example} \label{ss:related}
Let $\hat Y$, $Y$ and $y_0$ be as in \S\ref{ss:Sasha}. Let $\alpha\in H^1(\BF_q ,\BZ/n\BZ )$,
$\alpha\ne 0$. The group $\BZ/n\BZ$ acts on $(\hat Y ,Y,y_0)$.
Now let  $(\hat X ,X,x_0)$ denote the form of $(\hat Y ,Y,y_0)$ corresponding to $\alpha$. Then

 \begin{enumerate}
\item[(a)] $X$ is a normal surface over $\BF_q$, $x_0\in X(\BF_q )$ is a singular point,
and $\hat X$ is a desingularization of $X$;
\item[(b)] the residue field of any closed point of the preimage of $x_0$ in $\hat X$ contains
$\BF_{q^m}$, where $m>1$ is the order of~$\alpha$.
%\item[(iii)] 
\end{enumerate}

Now we will define an element $f\in\widetilde{\LS}_1(X)$, i.e.,
%An element of $\widetilde{\LS}_1(X)$ is the same as 
a function $f:|X|\to O^{\times}$,
where $O\subset E_{\lambda}$ is the ring of integers. Namely, set 
\[
f(x):=1 \mbox{\, if \,} x\ne x_0, \quad f(x_0):=\zeta,
\]
%$f(x'):=1$ if $x'\ne x$ and $f(x):=\zeta$, 
where $\zeta\in O$, $\zeta^m=1$, $\zeta\ne 1$ (we assume that $E_{\lambda}$ is
big enough so that $\zeta$ exists). Properties (a)-(b) above imply that $f$ satisfies the conditions of Theorem~\ref{Mainth}
but $f\notin\LS_1(X)$.

\begin{rem}  \label{r:regularity_ess}
Property (b) implies that there are no
nonconstant morphisms $(C,c)\to (X,x_0)$ with $C$ being a smooth curve over $\BF_q$
and $c\in C(\BF_q)$. So Theorem~\ref{t:Hilbert} does not hold for our surface $X$ and $S=\{ x_0\}$,
and Lemma~\ref{l:regular} does not hold for the local ring of $X$ at $x_0$.
Therefore in Theorem~\ref{t:Hilbert} and Lemma~\ref{l:regular} the regularity assumption 
cannot be replaced by normality.
\end{rem}

%The following remark will be used in Remark~\ref{r:regularity_ess}.
%\begin{rem} 
%Let $Y$ and $y$ be as in \S\ref{ss:Sasha}. The group $\BZ/n\BZ$ acts on $(Y,y,\hat Y)$, so each
%element $\alpha\in H^1(\BF_q ,\BZ/n\BZ )$ defines a form $(Z,z,\hat Z)$ of $(Y,y,\hat Y)$. Then the %singular point $z\in Z$ is defined over $\BF_q$, but 
%the residue field of any closed point of the preimage of $z$ in $\hat Z$ contains
%$\BF_{q^m}$, where $m$ is the order of~$\alpha$.
%\end{rem}

\appendix

\section{Proof of Theorem~\ref{t:Hilbert} } \label{s:Hilbert}
We follow the proof of \cite[Lemmas 3.2-3.3]{Bl} and \cite[Lemma 20]{W1}  (with slight modifications).

\medskip

\subsection{A conventional formulation of Hilbert irreducibility}  \label{ss:conventional}
Let $k$ be a global field, i.e., a finite extension of either $\BQ$ or $\BF_p (t)$.
The following version of the Hilbert irreducibility theorem is proved in 
\cite[Ch.~9, Theorem~4.2]{Lang2} and in  \cite[Ch.~13, Proposition~13.4.1]{FJ}.

\begin{theorem}   \label{t:Hilb}
Let $U$ be an open subscheme of the affine space $\BA_k^n$. Let $\tilde U$ be an irreducible
scheme finite and etale over $U$. Let $U(k)_{\Hilb}\subset U(k)$ denote the set of those points $\alpha\in U(k)$ that are inert in $\tilde U$ (i.e., the fiber of the map $\tilde U\to U$ over $\alpha$ is the spectrum of a field). Then $U(k)_{\Hilb}\ne\emptyset$. Moreover,
for every finitely generated subring $R\subset k$ with field of fractions $k$ one has
$U(k)_{\Hilb}\cap R^n\ne\emptyset$. \hfill\qedsymbol
\end{theorem}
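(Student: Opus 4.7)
The plan is to give the classical two-step proof of Hilbert irreducibility (as in Lang or Fried--Jarden): reduce to one variable by induction on $n$, then treat the one-variable case by a Galois-theoretic description combined with a counting argument on thin sets.

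For the reduction step, I would induct on $n$. Given $\tilde U \to U \subset \BA_k^n$ irreducible and finite \'etale, project to $\BA_k^{n-1}$ by dropping the last coordinate, so that after a generic linear change of coordinates the projection is dominant with geometrically integral generic fiber. Over the generic point of $\BA_k^{n-1}$ we obtain a finite \'etale irreducible cover of an open subset of $\BA^1_{k(t_1,\ldots,t_{n-1})}$; spreading this out, there is an open $V\subset\BA^{n-1}_k$ and a Hilbert-type subset of $V(k)\cap R^{n-1}$ such that for $(a_1,\ldots,a_{n-1})$ in this subset, the fiber $\tilde U_a\to U_a$ is again an irreducible finite \'etale cover of an open in $\BA^1_k$. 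The inductive hypothesis, applied to the auxiliary covers of $V$ that detect irreducibility of the fibers, supplies such an $(a_1,\ldots,a_{n-1})\in R^{n-1}$, reducing the problem to the case $n=1$.

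For the one-variable case, let $\hat U\to U$ be the Galois closure of $\tilde U\to U$, with Galois group $G$, and let $H\subset G$ be the stabilizer of a chosen geometric point of $\tilde U$ above the generic point of $U$, so that $\tilde U = \hat U/H$ and $[G:H]=\deg(\tilde U/U)$. For $\alpha\in U(k)$, the fiber of $\tilde U$ over $\alpha$ splits according to the orbits of a decomposition subgroup $D_\alpha\subset G$ (well defined up to conjugacy) acting on $G/H$; hence $\alpha\in U(k)_{\Hilb}$ precisely when $D_\alpha$ acts transitively on $G/H$. The negation is that $D_\alpha$ is contained in some proper subgroup $M\subsetneq G$ whose action on $G/H$ is intransitive, which is equivalent to $\alpha$ lifting to a $k$-point of the intermediate cover $\hat U/M\to U$. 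Thus the non-inert points form the union, over finitely many conjugacy classes of such $M$, of the images of $(\hat U/M)(k)\to U(k)$; these are \emph{thin sets} in the sense of Serre.

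The main obstacle, and the Diophantine heart of the theorem, is showing that thin sets do not exhaust $R\cap U(k)$. Over $k=\BQ$ with $R=\BZ$ this is the classical sieve/counting estimate: the number of elements of a thin subset of $\BZ$ of archimedean absolute value at most $N$ is $O(N^{1/2}\log N)$ (either via elementary geometry of numbers for type 1 thin sets, or via Siegel's theorem on integral points for type 2), which is negligible compared to the $\sim 2N$ integers available. Over a general global field $k$, one fixes a transcendence basis of $k$ over the prime field and reduces to the estimate over $\BQ$ or $\BF_p(t)$, noting that any finitely generated subring $R\subset k$ is contained in a localization of the integral closure of a polynomial ring in this basis. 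Since the full argument is carried out in detail in the cited references of Lang and Fried--Jarden, I would import the counting estimate as a black box rather than reprove it, and focus the exposition on the geometric reduction steps above.
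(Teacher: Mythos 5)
The paper does not actually prove Theorem~\ref{t:Hilb}: it is imported verbatim, with the references \cite[Ch.~9, Theorem~4.2]{Lang2} and \cite[Ch.~13, Proposition~13.4.1]{FJ} standing in for the proof (hence the qed symbol with no argument). So there is no internal proof to compare against; the relevant comparison is with the standard proofs in those references, and your outline is exactly that standard route: reduce to one variable, pass to the Galois closure, identify the non-inert locus with a finite union of thin sets coming from intermediate covers $\hat U/M\to U$, and beat the thin sets by the counting estimate for integral points. Since you then cite the counting estimate from the same sources the paper cites for the whole theorem, your proposal is in effect an expanded version of the paper's citation, and as such it is acceptable and consistent with the paper's treatment.

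Two places in your sketch are thinner than they appear and would need real work in a self-contained write-up. First, in the induction step, the assertion that the set of parameters $(a_1,\ldots,a_{n-1})$ for which the specialized cover $\tilde U_a\to U_a$ stays irreducible contains a Hilbert-type subset of $\BA^{n-1}(k)$ is not a formal consequence of the inductive hypothesis; it is itself a nontrivial lemma (Bertini--Noether plus the construction of the auxiliary covers of $V$ from the Galois closure of the generic-fibre cover over $k(t_1,\ldots,t_{n-1})$), which is precisely the content of the reduction lemmas in \cite{Lang2} and \cite{FJ}. Second, the final paragraph's passage from the $\BZ$ (or $\BF_p[t]$) counting bound to an arbitrary finitely generated subring $R\subset k$ of an arbitrary global field also requires the stability statements for Hilbert sets under finite extensions and localizations proved in \cite{FJ}; note in particular that the function-field case matters here, since in Theorem~\ref{t:Hilbert} the scheme $X$ may live over $\BF_p$, and a treatment covering only number fields (as in parts of \cite{Lang2}) would not suffice for the paper's application. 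Neither point is a wrong turn, but both should be either proved or explicitly attributed rather than folded into the induction.
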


%If $T=\emptyset$ then Theorem~\ref{t:Hilb-dense} says that $U$ has a $k$-point which 
%does not split in $\tilde U$. This is 
%proved in \cite[Ch.~9, Theorem~4.2]{Lang} and in  \cite[Ch.~13, Proposition~13.4.1]{FJ}.
%
%By  \cite[Lemma 23]{W1} or by  \cite[Lemma 3.4]{Ge},
%Theorem~\ref{t:Hilb-dense} for arbitrary $T$ follows from Theorem~\ref{t:Hilb-dense} for 
%$T=\emptyset$. One can also deduce Theorem~\ref{t:Hilb-dense} from 
%\cite[Ch.~9, Theorem~4.2]{Lang} using the argument from the proof of 
%\cite[Ch.~9,  Corollary~2.5]{Lang}.

Let $T$ be a finite set of nonarchmidean places of $k$.
Set $$\hat k:=\prod\limits_{v\in T}k_v,$$ where $k_v$ is the completion of $k$.
The topology on $\hat k$ induces a topology on the set of $\hat k$-points of any algebraic
variety over $k$. The following corollary of Theorem~\ref{t:Hilb} is standard.

\begin{cor}   \label{c:Hilb-dense}
In the situation of Theorem~\ref{t:Hilb} the image of $U(k)_{\Hilb}$ in $U(\hat k )$ is dense.
\end{cor}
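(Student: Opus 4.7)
The plan is to reduce the density assertion to the bare existence statement of Theorem~\ref{t:Hilb} by combining weak approximation on $\BA^n_k$ with an affine change of variables that shrinks $R$-integral points of $\BA^n$ down into an arbitrarily small $\hat k$-neighborhood.

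Fix a non-empty open $V\subset U(\hat k)$; we must produce a point of $U(k)_{\Hilb}$ whose image in $U(\hat k)$ lies in $V$. First I would choose a finitely generated subring $R\subset k$ with field of fractions $k$ that is contained in $\cO_v$ for every $v\in T$: in the number field case take $R=\cO_k$, and in the function field case fix any place $v_\infty\notin T$ and take $R$ to be the ring of functions regular outside $v_\infty$. Since $k$ is dense in $\hat k=\prod_{v\in T}k_v$ (weak approximation for $\BA^n$), I can pick $\beta\in k^n$ so close to some point of $V$ that $\beta\in V$, and so close that $\beta\in U(k)$ (recall that $U$ is open in $\BA^n$, so its $\hat k$-points form an open subset of $\hat k^n$).

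Next I would choose a scalar $\pi\in k^\times$ with $|\pi|_v$ so small that the $\hat k$-ball $\beta+\prod_{v\in T}(\pi\cO_v)^n$ of radius $|\pi|_v$ in the $v$-th coordinate is contained in $V$; this is possible because $V$ is open at $\beta$ and because for any $N\ge 0$ there exists $\pi\in k^\times$ with $\max_{v\in T}|\pi|_v\le q_v^{-N}$. Now consider the automorphism
\[
\phi:\BA^n_k\iso\BA^n_k,\qquad x\mapsto\beta+\pi x,
\]
and set $U':=\phi^{-1}(U)\subset\BA^n_k$, an open subscheme, together with the finite etale cover $\tilde U':=\tilde U\times_U U'\to U'$. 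Since $\phi$ is an isomorphism, $\tilde U'$ remains irreducible, so the hypotheses of Theorem~\ref{t:Hilb} are satisfied for $(U',\tilde U',R)$.

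Applying that theorem yields a point $\gamma\in U'(k)_{\Hilb}\cap R^n$. Set $\alpha:=\phi(\gamma)=\beta+\pi\gamma$. Because $\phi$ is an isomorphism identifying $\tilde U'\to U'$ with the base change of $\tilde U\to U$, the inertness of $\gamma$ in $\tilde U'$ is equivalent to the inertness of $\alpha$ in $\tilde U$, so $\alpha\in U(k)_{\Hilb}$. On the other hand, $\gamma\in R^n\subset\prod_{v\in T}\cO_v^n$, hence $\pi\gamma$ lies in $\prod_{v\in T}(\pi\cO_v)^n$, so by the choice of $\pi$ the image of $\alpha$ in $U(\hat k)$ lies in $V$. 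This exhibits a point of $U(k)_{\Hilb}$ in $V$, proving density. The only delicate ingredient is the simultaneous choice of $R$ and $\pi$, but as sketched above this is entirely elementary in both the number field and function field cases.
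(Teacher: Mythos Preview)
Your proof is correct and follows essentially the same route as the paper's: both arguments apply Theorem~\ref{t:Hilb} after precomposing with an affine automorphism $x\mapsto \pi x+\beta$ of $\BA^n_k$ so that $R$-integral points land in a prescribed $\hat k$-neighborhood. The only cosmetic differences are that the paper fixes $\pi\in R$ with $|\pi|_v<1$ once and then uses powers $\pi^m$, whereas you choose $\pi\in k^\times$ directly small enough for the given $V$; and the paper phrases the key step as ``$U(k)_{\Hilb}\cap g(R^n)\ne\emptyset$ for any $g\in\Aut(\BA^n_k)$'' rather than pulling back $U$ and $\tilde U$ along $\phi$.
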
   

\begin{proof} 
We follow the proof of \cite[Ch.~9,  Corollary~2.5]{Lang2}.  Let $R\subset k$ be a finitely generated subring with field of fractions $k$ such that $|x|_v\le 1$ for all $x\in R$, $v\in T$.
Choose a nonzero $\pi\in R$ so that $|\pi |_v < 1$ for all $v\in T$. Then every nonempty open
subset of $\hat k^n$ contains a subset of the form
$\pi^m R^n+u$, where $m\in\BZ$, $u\in k^n$.
Theorem~\ref{t:Hilb} implies that $U(k)_{\Hilb}\cap g(R^n)\ne\emptyset$ for any
$g\in\Aut (\BA_k^n )$. Setting $g(x)=\pi^m x+u$ we see that 
$U(k)_{\Hilb}\cap (\pi^m R^n+u)\ne\emptyset$. %intersects the
\end{proof} 

\subsection{Bloch's lemmas}  \label{ss:Bloch}
We follow  \cite[\S 3]{Bl} (with minor modfications).
\begin{lem}  \label{l:1Bloch}
Let $k$ and $T$ be as in \S\ref{ss:conventional}. Let $V$ be an irreducible smooth $k$-scheme
and $\tilde V$ an irreducible scheme finite and etale over $V$. Then for
any non-empty open subset $W\subset V(\hat k)$ there exists a finite extension 
$k'\supset k$ equipped with a $k$-morphism $k'\to \hat k$ and a point $\alpha\in V(k')$ such that 
$\alpha$ is inert in $\tilde V$ and the image of $\alpha$ in $V(\hat k)$ belongs to $W$.
\end{lem}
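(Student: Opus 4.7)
The plan is to reduce to the Hilbert irreducibility density result (Corollary~\ref{c:Hilb-dense}) by producing a finite etale map from an open of $V$ onto an open of affine space. First I would localize: pick $P = (P_v)_{v\in T} \in W$ and shrink $W$ to a box neighborhood $\prod_v W_v$ with $W_v \subset V(k_v)$ open. Each $P_v$ has image a single scheme point $p_v \in V$, and, replacing $V$ by an affine open containing every $p_v$, I may assume $V$ is affine of dimension $n$.

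The key technical step, which I expect to be the main obstacle, is a sufficiently controlled Noether normalization. Embedding $V \hookrightarrow \BA^N_k$, I would choose a generic linear projection $\phi : \BA^N_k \to \BA^n_k$ with three properties on $V$: (i) $\phi|_V$ is finite (Noether normalization); (ii) $\phi|_V$ is etale at each $p_v$; (iii) $\phi^{-1}(\phi(p_v)) \cap V = \{p_v\}$ for each $v$. The delicate part is arranging (ii)--(iii) simultaneously over $k$ at the finite set $\{p_v\}$, whose residue fields $k(p_v) \hookrightarrow k_v$ may be transcendental; however, all three conditions cut out Zariski-open loci in the parameter space of projections, so generic choice over the infinite field $k$ suffices. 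Letting $V_{\text{et}}$ be the etale locus of $\phi|_V$ and $U := \BA^n_k \setminus \phi(V \setminus V_{\text{et}})$ (open because $\phi|_V$ is finite), conditions (ii)--(iii) force $p_v \in V_1 := \phi^{-1}(U)$, and $\phi_1 := \phi|_{V_1} : V_1 \to U$ is finite etale. The pullback $\tilde V_1 := \tilde V \times_V V_1$ is a non-empty open in the irreducible $\tilde V$, so is irreducible, and $\tilde V_1 \to U$ is finite etale.

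I would then apply Corollary~\ref{c:Hilb-dense} to $\tilde V_1 \to U$ to obtain $\beta \in U(k)$ inert in $\tilde V_1$ with image in $U(\hat k)$ in an arbitrarily small prescribed neighborhood of $\phi(P)$. Since $\phi_1$ is etale at $P_v$, the $v$-adic implicit function theorem yields, for $\beta$ close enough, a unique lift $\alpha_v \in V_1(k_v)$ near $P_v$ with $\phi_1(\alpha_v) = \beta$; choose $\beta$ so that $\alpha_v \in W_v$ for every $v$.

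To assemble the conclusion: inertness of $\beta$ in $\tilde V_1 \to U$, combined with surjectivity of the finite etale map $\tilde V_1 \to V_1$, forces $(V_1)_\beta = \Spec(k')$ for a single finite extension $k'/k$ (each component of $(V_1)_\beta$ would otherwise yield a non-empty summand of $(\tilde V_1)_\beta$). The canonical $\alpha \in V_1(k')$ has fiber $(\tilde V)_\alpha = (\tilde V_1)_\beta = \Spec(k'')$, a field, so is inert in $\tilde V$. Each $\alpha_v$ corresponds to a $k$-embedding $k' \hookrightarrow k_v$, and these assemble to a $k$-morphism $k' \to \hat k$ under which $\alpha$ maps to $(\alpha_v)_v \in \prod_v W_v \subset W$, as required.
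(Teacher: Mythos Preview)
Your overall strategy---reduce to Corollary~\ref{c:Hilb-dense} via a finite \'etale map from an open of $V$ to an open of affine space, then take $k'$ to be the residue field of the fiber---is exactly the paper's. The gap is your condition~(iii). It is \emph{not} a nonempty Zariski-open condition on linear projections; in fact (ii) and (iii) can be jointly unachievable. If $p_v$ is $k$-rational and $\phi|_V$ has degree $d$, the fiber over the $k$-point $\phi(p_v)$ has degree~$d$, and \'etaleness at $p_v$ makes $p_v$ contribute exactly~$1$, forcing $d-1$ further points. So (ii)$\wedge$(iii) requires $d=1$, i.e.\ $\phi|_V$ an isomorphism onto an open of $\BA^n$. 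For a concrete obstruction take $V = E \setminus \{O\}$ with $E/k$ an elliptic curve and $p_v \in V(k)$: every finite $\phi:V \to \BA^1$ has degree $\ge 2$, and the only way to make the set-theoretic fiber over $\phi(p_v)$ equal to $\{p_v\}$ is to have $\phi$ totally ramified there, contradicting~(ii).

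You correctly flagged this step as the delicate one; the paper's proof simply avoids it. The paper picks \emph{any} nonempty open $V' \subset V$ admitting a finite \'etale morphism to an open $U \subset \BA^n_k$ (such $V'$ exists by smoothness of $V$), imposing no constraint on which points lie in $V'$. Since $V$ is smooth, $V'(\hat k)$ is dense in $V(\hat k)$, hence meets $W$; the image of $V'(\hat k) \cap W$ in $U(\hat k)$ is nonempty and open because an \'etale map is open on $k_v$-points. Corollary~\ref{c:Hilb-dense} then applies directly. In your setup: drop~(iii), build $V_1$ from (i)--(ii) alone, replace your chosen $P$ by any point of $V_1(\hat k) \cap W$ (nonempty by density), and the rest of your argument goes through verbatim.
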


\begin{proof}
Choose a nonempty open subscheme $V'\subset V$ so that there exists a finite etale morphism $V'\to U$, where $U$ is an open subscheme of an affine space $\BA_k^n$.
Since $V' (\hat k)$ is dense in $V (\hat k)$ and
the map $V' (\hat k)\to U(\hat k)$ is open the image of $V' (\hat k)\cap W$ in $U(\hat k)$
is nonempty and open. So by Corollary~\ref{c:Hilb-dense}, it contains a point $\beta\in U(k)$ inert in $\tilde V$. Let $V'_{\beta}$ be the fiber of $V'\to U$ over $\beta$. Since  $\beta$ is inert, $V'_{\beta}=\Spec k'$ for some field $k'\supset k$. By construction, 
$V'_{\beta}$ has a $\hat k$-point in $W$. This $\hat k$-point yields a $k$-morphism $f:k'\to \hat k$. 
On the other hand, the embedding $V'_{\beta}\mono V$ defines a point $\alpha\in V( k')$. 
Clearly $k'$, $f$, and $\alpha$ have the desired properties.
\end{proof}

Let $k$, $T$, and $\hat k$ be as in \S\ref{ss:conventional}.
Set $\hat O:=\prod\limits_{v\in T}O_v$, where $O_v\subset k_v$ is the ring of integers. Set
\begin{equation}   \label{e:O}
O:=\hat O\times_{\hat k}k=\{x\in k \;\bigl\lvert \; |x|_v\le 1 \mbox{ for } v\in T\}.
\end{equation}
If $T\ne\emptyset$ then $O$ is a semilocal ring whose completion equals $\hat O$, and
if $T=\emptyset$ then $O=k$.

Let $A$ be an etale $\hat k$-algebra (i.e., $A$ is a product of fields each of which is a
finite separable extension of one of the fields $k_v$, $v\in T$). Let $\hat O_A\subset A$ be the integral
closure of $\hat O$ in $A$.

\begin{lem}  \label{l:2Bloch}
Let $Y$ be a scheme of finite type over $O$. Let $V\subset Y\otimes_O k$ be an irreducible open
subscheme smooth over $k$  and $\tilde V$ an irreducible scheme finite and etale over $V$.
For any field $K\supset k$ let $V(K)_{\Hilb}\subset V(K)$ denote the set of those $K$-points of
$V$ that are inert in~$\tilde V$. Let $\alpha_0\in Y(\hat O_A)\times_{Y(A)}V(A)\subset Y(\hat O_A)$
and let $I\subset\hat O_A$ be an open ideal.
%%Let $W\subset Y(\hat O_A)$ %\times_{Y(A)}V(A)$ 
%%be an %a non-empty 
%%open\footnote{Each of the sets $Y(\hat O_A)$, $Y(A)$, and $V(A)$ carries a topology.} subset
%%such that $W\times_{Y(A)}V(A)\ne\emptyset$.
 Then
%For any non-empty open\footnote{Each of the sets $Y(\hat O_A)$, $Y(A)$, and $V(A)$ carries a 
%topology.} subset $W\subset Y(\hat O_A)\times_{Y(A)}V(A)$ 
there exists a finite extension 
$k'\supset k$ equipped with a $k$-morphism $k'\to A$ and an element 
\[
\alpha\in Y(O')\times_{Y(k')}V(k')_{\Hilb}, \quad O':=k'\times_A\hat O_A 
\]
such that the image of $\alpha$ in  $Y(\hat O_A)$ %$Y(\hat O_A)\times_{Y(A)}V(A)$ 
is congruent to $\alpha_0$ modulo $I$.
%%belongs to $W$.
\end{lem}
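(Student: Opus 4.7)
The strategy is to reduce to Lemma~\ref{l:1Bloch} applied over a suitable finite extension $\ell/k$ chosen so that $A$ coincides with $\hat\ell := \prod_{w \in T_\ell} \ell_w$ for an appropriate finite set $T_\ell$ of places of $\ell$. Weak approximation for $\hat\ell = A$ then supplies $\ell'$-rational points approximating $\alpha_0 \in V(A)$ for finite extensions $\ell'/\ell$, while Hilbert irreducibility (packaged in Lemma~\ref{l:1Bloch}) supplies the inert condition.

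First I would shrink $V$ so that it admits a finite etale morphism $\pi \colon V \to U$ onto an open $U \subset \BA^n_k$ while retaining the image of $\alpha_0$; the cover $\tilde V$ stays irreducible upon this restriction. Next, choose a finite separable extension $\ell/k$ with the following three properties: (a) $\ell$ contains $k_0$, the algebraic closure of $k$ in $k(V)$; (b) for each factor $A_j$ of $A$ there is a $k$-embedding $\ell \hookrightarrow A_j$ and a place $w_j$ of $\ell$ with $A_j = \ell_{w_j}$, obtained by Krasner's lemma from approximating a primitive element of $A_j/k_{v_j}$ by a $k$-algebraic one; (c) $\ell$ is linearly disjoint from $\tilde k_0$ over $k_0$, where $\tilde k_0$ is the algebraic closure of $k$ in $k(\tilde V)$. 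Conditions (a) and (b) fix the local structure of $\ell$ at finitely many places, while (c) is a generic disjointness condition compatible with them. Let $V^* \subset V \otimes_k \ell$ be the irreducible component corresponding to the identity embedding of $k_0$; then $V^*$ is geometrically irreducible, contains the image of $\alpha_0$, and by (c) the pullback $\tilde V^* := (\tilde V \otimes_k \ell)|_{V^*}$ is irreducible and finite etale over $V^*$.

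Set $T_\ell = \{w_j\}$ so that $\hat\ell = A$, and let $W \subset V^*(A)$ be the open neighborhood of $\alpha_0$ consisting of those $A$-points of $V^*$ whose image in $Y(A)$ lifts to a point of $Y(\hat O_A)$ congruent to $\alpha_0$ modulo $I$; openness follows from the finite-type hypothesis on $Y/O$ (which makes $Y(\hat O_A)$ open in $Y(A)$) together with continuity of reduction modulo $I$. Applying Lemma~\ref{l:1Bloch} to $(\ell, V^*, \tilde V^*, W)$ produces a finite extension $\ell'/\ell$, an $\ell$-morphism $\ell' \to A$, and $\alpha \in V^*(\ell')_{\Hilb}$ whose image in $V^*(A)$ lies in $W$. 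Setting $k' = \ell'$ and $O' = k' \times_A \hat O_A$, the irreducibility of $\tilde V^*$ over $V^*$ promotes ``inert in $\tilde V^*$'' to ``inert in $\tilde V$'', and the $W$-condition delivers both the $\hat O_A$-extension and the congruence modulo $I$; compatibility on $A$ forces this extension to lie in $Y(O')$. The main obstacle is arranging property (c) together with the choice of the component $V^*$ containing the image of $\alpha_0$: without the resulting irreducibility of $\tilde V^*$ over $V^*$, the constructed point $\alpha$ would only be inert in a proper sub-cover of $\tilde V|_{V^*}$, not in $\tilde V$ itself.
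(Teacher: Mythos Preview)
Your approach is essentially the same as the paper's: realize $A$ as $\prod_w \ell_w$ for a global field $\ell$ via weak approximation, apply Lemma~\ref{l:1Bloch} over $\ell$, and then use the co-Cartesian square $\Spec O' = \Spec k' \sqcup_{\Spec A} \Spec \hat O_A$ to pass from $Y(\hat O_A)\times_{Y(A)}V(k')$ to $Y(O')$. The paper is terser --- it just says ``replacing $Y$ and $V$ with $Y\otimes_O\tilde O$ and $V\otimes_k\tilde k$'' --- while you take explicit care to preserve irreducibility after base change via conditions (a) and (c); this extra care is warranted, since Lemma~\ref{l:1Bloch} does require both $V$ and $\tilde V$ irreducible.

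Two minor points. First, for $\alpha_0$ to land in the component $V^*$ corresponding to the identity embedding of $k_0$, the embeddings $\ell \hookrightarrow A_j$ in (b) must restrict on $k_0$ to the maps $k_0 \to A_j$ induced by $\alpha_0$; this is easy to arrange (carry out the Krasner/weak-approximation construction over $k_0$ rather than over $k$), but as written your (b) does not guarantee it. Second, your initial step of shrinking $V$ to get a finite \'etale map to an open in affine space is unnecessary, since Lemma~\ref{l:1Bloch} already does this internally.
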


%This is a slight modification of  \cite[Lemma 3.2]{Bl}. The proof below also follows  \cite{Bl}.

\begin{proof}
By weak approximation, there exists a finite extension $\tilde k\supset k$ and a finite set $\tilde T$
of nonarchimedian places of $\tilde k$ such that $A=\prod\limits_{w\in\tilde T}\tilde k_w$.
Setting $\tilde O:= \{x\in\tilde k \;\bigl\lvert \; |x|_w\le 1 \mbox{ for } w\in\tilde T\}$ and replacing
$Y$ and $V$ with $Y\otimes_O \tilde O$ and $V\otimes_k \tilde k$ we reduce the lemma to
the particular case where $A=\hat k$ (and therefore $\hat O_A=\hat O$).

In this case let $W$ be the set of all elements of $Y(\hat O)$ congruent to $\alpha_0$ modulo $I$.
Let $W_1\subset Y(\hat k )$ be the image of $W$ and $W_2:=W_1\cap V(\hat k )$.
%%we are given a non-empty open subset $W\subset Y(\hat O)$. 
%$W\subset Y(\hat O)\times_{Y(\hat k )}V(\hat k )$.
%Its image $W_1\subset V(\hat k )$ is also non-empty and open (because the map
%$Y(\hat O)\to Y(\hat k )$ is open). Applying Lemma~\ref{l:1Bloch} to $V$, $\tilde V$, and $W_1$ 
%%Let $W_1\subset Y(\hat k )$ be its image and $W_2:=W_1\cap V(\hat k )$.
%%Then $W_1$ and $W_2$ are also non-empty and open (because  
%%$W\times_{Y(A)}V(A)\ne\emptyset$ and the map $Y(\hat O)\to Y(\hat k )$ is open). 
The subsets $W_1, W_2\subset Y(\hat k )$ are open and nonempty.
Applying Lemma~\ref{l:1Bloch} to $V$, $\tilde V$, and $W_2\subset V(\hat k)$ 
we get a finite extension $k'\supset k$ equipped with a $k$-morphism $k'\to \hat k$ and an element 
$\alpha\in Y(\hat O)\times_{Y(\hat k)}V(k')_{\Hilb}$
 %, \quad O':=k'\times_A\hat O 
such that the image of $\alpha$ in  %$Y(\hat O)\times_{Y(\hat k)}V(\hat k)$ 
$Y(\hat O)$ belongs to $W$.
It remains to note that if one sets $\quad O':=k'\times_{\hat k}\hat O$ then
$$Y(\hat O)\times_{Y(\hat k)}V(k')_{\Hilb}=Y(O')\times_{Y(k')}V(k')_{\Hilb}$$ because the
square
\[
\xymatrix{
 \Spec \hat k\ar[r]\ar[d] &\Spec \hat O\ar[d] \\
 \Spec k' \ar[r] & \Spec O'
}
\]
is co-Cartesian in the category of all schemes.
%Without loss of generality we can assume that $A=\hat k$ (and therefore $\hat O_A=\hat O$).
\end{proof}

\subsection{On regular local rings}   \label{ss:regular}
%\subsection{Remarks on regularity (or on regular local rings)}  
In this subsection we prove Lemma~\ref{l:regular}, which will be used in \S\ref{ss:proofofth}.
\begin{lem}      \label{l:prime_ideals}
Let $R$ be a ring and $J\subset R$ an ideal. Let $\fp_1$, \ldots, $\fp_n$ be prime ideals not
containing $J$. Then for every $r_0\in R$ there exists $r\in r_0+J$ such that $r\notin\fp_i$ for each $i$.
\end{lem}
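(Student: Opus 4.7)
The statement is a coset version of the classical prime avoidance lemma: it asserts that $r_0+J$ is not contained in $\fp_1\cup\cdots\cup\fp_n$. The plan is to adapt the standard prime avoidance argument, producing the desired element by an explicit additive correction of $r_0$ built from carefully chosen elements of $J$.

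First, I will reduce to the case where the $\fp_i$ are pairwise incomparable. If $\fp_i\subsetneq\fp_j$ for some $i\ne j$, then any $r$ with $r\notin\fp_j$ automatically satisfies $r\notin\fp_i$, so I may discard $\fp_i$; the remaining primes still satisfy $J\not\subseteq\fp_k$. Iterating, I may assume no $\fp_i$ is contained in any $\fp_j$ with $j\ne i$. Under this assumption, for each $i$ and each $j\ne i$ pick $b_{j,i}\in\fp_j\setminus\fp_i$, and set $a_i:=\prod_{j\ne i}b_{j,i}$. Then $a_i\in\bigcap_{j\ne i}\fp_j$, while $a_i\notin\fp_i$ because $\fp_i$ is prime. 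Using the hypothesis $J\not\subseteq\fp_i$, pick $c_i\in J\setminus\fp_i$ and set $d_i:=a_ic_i$. Then $d_i\in J$, $d_i\in\fp_j$ for every $j\ne i$, and $d_i\notin\fp_i$ (again by primality).

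Now the correction is chosen coordinate by coordinate. For each $i$, set
\[
\epsilon_i:=\begin{cases}0&\text{if }r_0\notin\fp_i,\\ 1&\text{if }r_0\in\fp_i,\end{cases}
\]
so that in either case $r_0+\epsilon_i d_i\notin\fp_i$. Define
\[
r:=r_0+\sum_{i=1}^n\epsilon_i d_i.
\]
Clearly $r-r_0\in J$. Fixing $i$ and reducing modulo $\fp_i$, every term $\epsilon_j d_j$ with $j\ne i$ vanishes, so $r\equiv r_0+\epsilon_i d_i\pmod{\fp_i}$, which is not in $\fp_i$ by the choice of $\epsilon_i$. Hence $r\notin\fp_i$ for every $i$, as required.

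There is no real obstacle here: the only point requiring care is the initial pruning to pairwise incomparable primes, which is what allows the construction of the multiplicative "separators" $a_i$ with one prescribed non-membership. Once the $d_i$ are in hand, the coset adjustment by the $\epsilon_i$ is an elementary bookkeeping step, and the final verification is a one-line congruence calculation modulo each $\fp_i$.
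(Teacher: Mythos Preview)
Your proof is correct and follows essentially the same approach as the paper: both reduce to pairwise incomparable primes and then use correctors lying in $J\cap\bigcap_{j\ne i}\fp_j$ but not in $\fp_i$. The only cosmetic difference is that the paper presents this inductively (adjusting for one prime at a time via an element $u\in J\fp_1\cdots\fp_{n-1}\setminus\fp_n$), whereas you unroll the induction into a single explicit sum $r_0+\sum_i\epsilon_i d_i$.
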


\begin{proof}     
We can assume that $\fp_i\not\subset\fp_j$ for $i\ne j$. By induction, we can also assume that 
$r\notin\fp_i$ for $i<n$. By assumption, $J\fp_1\ldots\fp_{n-1}\not\subset\fp_n$.
Let $u\in J\fp_1\ldots\fp_{n-1}$, $u\not\in\fp_n$. Then set $r:=r_0+u$ if $r_0\in\fp_n$ and
$r:=r_0$ if $r_0\not\in\fp_n$.
\end{proof}

\begin{lem}      \label{l:regular}
Let $R$ be a regular local ring with maximal ideal $\fm$. Let $l\subset (\fm /\fm^2)^*$ be a 1-dimensional subspace and $D\subset\Spec R$ a divisor. Then there is a regular
1-dimensional closed subscheme $C\subset\Spec R$ tangent to $l$ such that $C\not\subset D$.
\end{lem}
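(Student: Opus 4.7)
The plan is to induct on $n := \dim R$. The base case $n = 1$ is immediate: $R$ is a DVR, so $l^\perp = 0 \subset \fm/\fm^2$ and $C := \Spec R$ itself is a regular 1-dimensional subscheme tangent to $l$, not contained in the proper subscheme $D$. Assume now $n \geq 2$ and the result for dimension $n-1$. Let $\fp_1, \ldots, \fp_k$ be the minimal primes of the divisor $D$; these have height one, hence differ from $\fm$ (height $n \geq 2$), so none of them contains $\fm^2$ (any prime containing $\fm^2$ would also contain $\sqrt{\fm^2} = \fm$). My first step is to choose any nonzero $\bar x_1 \in l^\perp$, lift it to an arbitrary $x_1^0 \in \fm$, and apply Lemma~\ref{l:prime_ideals} with $r_0 = x_1^0$, $J = \fm^2$, and the primes $\fp_j$. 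This produces $x_1 \in x_1^0 + \fm^2$ avoiding every $\fp_j$; its image in $\fm/\fm^2$ remains $\bar x_1$.

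Next I would pass to $R' := R/(x_1)$, which is regular local of dimension $n-1$ with tangent space naturally the annihilator of $\bar x_1$ in $(\fm/\fm^2)^*$; since $\bar x_1 \in l^\perp$, the line $l$ sits inside this tangent space as a 1-dimensional subspace $l'$. For each $j$, because $R$ is a UFD the height-one prime $\fp_j$ is principal, and $x_1 \notin \fp_j$ rules out $\fp_j \subset (x_1)$; hence $\fp_j R'$ is a nonzero proper ideal whose minimal primes $\fp_{j,i}'$ are height one in $R'$ (their pullbacks to $R$ are minimal primes of $\fp_j + (x_1)$, which has height two by the principal ideal theorem since $x_1 \notin \fp_j$). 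Setting $D' := \bigcup_{j,i} V(\fp_{j,i}')$ and applying the inductive hypothesis to the triple $(R', l', D')$ produces a regular 1-dimensional closed $C' \subset \Spec R'$ tangent to $l'$ with $C' \not\subset D'$. Taking $C \subset \Spec R$ to be $C'$ viewed via the closed immersion $\Spec R' \hookrightarrow \Spec R$ yields a regular 1-dimensional subscheme whose tangent at the closed point equals $l$.

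The main obstacle, and what dictates the construction of $D'$, will be to verify $C \not\subset D$ in $\Spec R$. If $C \subset V(\fp_j)$ for some $j$, then $\fp_j$ would lie in the prime $\fq$ defining $C$, and reducing modulo $x_1$ would give $\fp_j R' \subset \fq' := \fq R'$, the prime defining $C'$. Since $\fq'$ is prime it would absorb some minimal prime $\fp_{j,i}'$ of $\fp_j R'$, forcing $C' \subset V(\fp_{j,i}') \subset D'$ and contradicting the inductive output. The two subtle ingredients in this argument are that $R$ being a UFD lets us replace each $\fp_j$ by a single generator, and that catenarity of $R$ ensures the height drops by exactly one when passing to $R/(x_1)$.
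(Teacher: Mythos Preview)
Your proof is correct and follows essentially the same route as the paper's: pick $x_1\in\fm$ whose image in $\fm/\fm^2$ is a nonzero element of $l^\perp$, use Lemma~\ref{l:prime_ideals} (with $J=\fm^2$) to arrange that $x_1$ avoids the minimal primes of $D$, pass to the regular local ring $R/(x_1)$ of dimension one less, and induct. The paper's write-up is terser---it simply says ``replace $\Spec R$ with $Y=\Spec R/(r)$ and proceed by induction'' without spelling out the new divisor---whereas you carefully verify that the induced $D'$ has pure codimension one and that $C\not\subset D$ follows from $C'\not\subset D'$; but the argument is the same.
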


\begin{proof}     
It suffices to show that if $\dim R>1$ then there is a regular closed subscheme $Y\subset\Spec R$
such that $Y\not\subset D$ and the tangent space of $Y$ at its closed point 
contains $l$ (then one can replace $\Spec R$ with $Y$ and proceed by induction).

To construct $Y$, choose $r_0\in\fm$ so that the image of $r_0$ in $\fm /\fm^2$ is nonzero and
orthogonal to $l$. By Lemma~\ref{l:prime_ideals}, there exists $r\in r_0+\fm^2$ such that $r$
does not vanish on any irreducible component of $D$. Now set $Y:=\Spec R/(r)$.
\end{proof}

%\begin{rem}   \label{r:regularity_ess}
%In 
Note that in Lemma~\ref{l:regular} and Theorem~\ref{t:Hilbert} the regularity assumption 
cannot be replaced by normality, see Remark~\ref{r:regularity_ess} at the end of \S\ref{s:counterex}. 
% E.g., consider the scheme $Z$ from \S\ref{ss:related}. It is a normal surface over $\BF_q$ with a 
% singular point $z\in Z(\BF_q )$ such that for some desingularization $\pi :\hat Z\to Z$ the fiber 
% $\pi^{-1}(Z)$ has no $\BF_q$-points (then this is true for \emph{any} desingularization 
% $\pi :\hat Z\to Z$). Then there are no nonconstant morphisms $(C,c)\to (Z,z)$ with $C$ being a 
% smooth curve over $\BF_q$ and $c\in C(\BF_q)$. So Theorem~\ref{t:Hilbert} does not hold for 
%$X=Z$, $S=\{ z\}$, and Lemma~\ref{l:regular} does not hold for the local ring of $Z$ at $z$.
%\end{rem}

\subsection{Proof of Theorem~\ref{t:Hilbert}}% if $X\otimes\BQ\ne\emptyset$}  
\label{ss:proofofth}
 Let $\tilde U$ be the covering of $U$ corresponding to $H\subset\pi_1(U)$.
Then the surjectivity condition in Theorem~\ref{t:Hilbert}(i) means that $C\times_X\tilde U$ is
irreducible. Let us consider two cases.

\subsubsection{The case where $X\otimes\BQ\ne\emptyset$} \label{sss:unequal}
If $S=\emptyset$ then applying Lemma~\ref{l:2Bloch} for $k=\BQ$, $T=\emptyset$, and
$Y=V=U\otimes\BQ$ one gets a finite extension $k'\supset\BQ$ and a point $\alpha\in U(k')$
inert in $\tilde U$. Then it remains to choose $C$ and $\varphi : C\to X$ so that
the generic point of $C$ equals $\Spec k'$ and the restriction of
$\varphi$ to $\Spec k'$ equals $\alpha$.

If $S\ne\emptyset$ then apply Lemma~\ref{l:2Bloch} %in the following situation.
to the following $k$, $T$, $Y$, $V$, $A$, $\alpha_0$, and $I$.
As before, set $k=\BQ$. Define $T$ to be the image of $S$ in $\Spec\BZ$;
then the ring $O$ defined by \eqref{e:O} is the ring of rational numbers whose denominators
do not contain primes from $T$. Set $Y:=X\otimes O$, $V:=U\otimes\BQ$. 

To define $A$, $\alpha_0$, and $I$, proceed as follows.
Let $\hat O_{X,s}$ denote the completed local ring of $X$
at $s\in S$. Lemma~\ref{l:regular} and the regularity assumption\footnote{This place (and a similar one in \S\ref{sss:char=p}) is the only part of the proof of Theorem~\ref{t:Hilbert} 
where we use that $X$ is regular.}
%where the regularity assumption is used. } 
on $X$ allow to choose for each $s\in S$ %choose 
 %yields 
a 1-dimensional regular closed subscheme
$\hat C_s\subset\Spec\hat O_{X,s}$ tangent to $l_s$ such that $\hat C_s\times_XU\ne\emptyset$
and $\hat C_s\otimes\BQ\ne\emptyset$. Let  $\hat O_A$ be the ring of 
regular functions on $\coprod\limits_{s\in S}\hat C_s $. Let $A$ be the ring of fractions of $\hat O_A$.
%$A:=\hat O_A\otimes\BQ$.
The morphism 
\begin{equation}    \label{e:curves1}
\Spec \hat O_A=\coprod\limits_{s\in S}\hat C_s\to X
\end{equation}
 defines an element $\alpha_0\in Y(\hat O_A)$. Let $I\subset\hat O_A$ be the square of the 
 Jacobson radical of $\hat O_A$ .

 Now let $k'$, $O'$, and $\alpha$ be as in Lemma~\ref{l:2Bloch}. Let $\tilde C$ be the spectrum of 
 the integral closure of $\BZ$ in $k'$. Then there exists an open subscheme $C\subset\tilde C$
 containing $\Spec O'$ such that $\alpha :\Spec O'\to X$ extends to a morphism
 $\varphi : C\to X$. The pair $(C,\varphi )$ has the desired properties. % \hfill\qedsymbol

%\subsubsection{Proof of Theorem~\ref{t:Hilbert} if $X$ is over $\BF_p$}
\subsubsection{The case where $X$ is over $\BF_p$}  \label{sss:char=p}
%We mimic \S\ref{sss:unequal}.
After shrinking $U$ we can assume that there is a smooth morphsim $t:U\to\BP^1$, where 
$\BP^1:=\BP^1_{\BF_p}$. 

Just as in \S\ref{sss:unequal}, we will apply Lemma~\ref{l:2Bloch} for certain
$k$, $T$, $Y$, $V$, $A$, $\alpha_0$, and $I$. Take $k$ to be the field of rational functions on
$\BP^1$ and let $V$ be the generic fiber of $t:U\to\BP^1$. Choose formal curves $\hat C_s$
as in \S\ref{sss:unequal} but instead of requiring $\hat C_s\otimes\BQ\ne\emptyset$ require
the pullback of $dt$ to $\hat C_s$ to be nonzero. Define $A$, $\hat O_A$, $I$, and the morphism
\eqref{e:curves1} just as in \S\ref{sss:unequal}.

It remains to define $T$, $Y$, and $\alpha_0$. The rational map $t:\Spec A\to\BP^1$ extends to
a regular map 
\begin{equation}    \label{e:curves2}
\Spec \hat O_A=\coprod\limits_{s\in S}\hat C_s\to  \BP^1 .
\end{equation}
%$\coprod\limits_{s\in S}\hat C_s\to \BP^1$. 
Let $T\subset\BP^1$ be the image of
the composition $S\mono\coprod\limits_{s\in S}\hat C_s\to \BP^1$. Define $O\subset k$ by
\eqref{e:O}. Set $Y:=\hat X\times_{\BP^1}\Spec O$, where $\hat X\subset X\times\BP^1$ is the closure
of the graph of $t:U\to\BP^1$. The morphisms  \eqref{e:curves1} and \eqref{e:curves2}
%$\coprod\limits_{s\in S}\hat C_s\to \BP^1$ and  $\coprod\limits_{s\in S}\hat C_s\to X$ 
define a morphism $\alpha_0:\Spec \hat O_A\to Y$.
%$\alpha_0:\Spec \hat O_A=\coprod\limits_{s\in S}\hat C_s\to Y$.

Now apply Lemma~\ref{l:2Bloch} similarly to  \S\ref{sss:unequal}. \hfill\qedsymbol

\section{Weakly motivic $\BQbar_{\ell}$-sheaves and $\BQbar_{\ell}$-complexes}
\label{s:AppendixB}

In \S\ref{sss:mot2} we defined the category of weakly motivic $\BQbar_{\ell}$-sheaves
$\Sh_{\mot} (X,\BQbar_{\ell})$ and the category of weakly motivic $\BQbar_{\ell}$-complexes
$\sD_{\mot} (X,\BQbar_{\ell})$, see Definition~\ref{d:motivic}. These are full subcategories of
the corresponding mixed categories $\Sh_{\mix} (X,\BQbar_{\ell})$ and
$\sD_{\mix}(X,\BQbar_{\ell})$, see Remark~\ref{r:mixed}(i).  In this appendix we show that the category 
$\sD_{\mot}(X,\BQbar_{\ell})$ is stable under all ``natural" functors (similarly to the well known
results about $\sD_{\mix}$).

\begin{lem}    \label{l:units}
Let $f:X\to Y$ be a morphism between schemes of finite type over $\BF_p$.
Suppose that a $\BQbar_{\ell}$-sheaf $M$ on $X$ has the following property:
the eigenvalues of the geometric Frobenius acting on each stalk of $M$ are algebraic numbers
which are units outside of $p$. Then this property holds for the sheaves $R^if_!M$ and $R^if_*M$.
\end{lem}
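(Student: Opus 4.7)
The plan is to prove preservation under $R^i f_!$ first, then deduce $R^i f_*$ via Verdier duality. Algebraicity of Frobenius eigenvalues on both is automatic from constructibility; the content of the lemma is preservation of the units-outside-$p$ condition.

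For $R^i f_!$: by Nagata compactification, factor $f=\bar f\circ j$ with $j:X\hookrightarrow\bar X$ an open immersion and $\bar f:\bar X\to Y$ proper. Then $Rf_!=R\bar f_*\circ j_!$, and $j_!M$ inherits the property since its stalks are either stalks of $M$ or zero. Proper base change identifies the stalk of $R^i\bar f_*(j_!M)$ at a closed point $y\in Y$ with residue field $\BF_q$ with $H^i_c(X_{\bar y},M|_{X_{\bar y}})$. So the claim reduces to the following: for a scheme $V$ of finite type over a finite field and a sheaf $N$ on $V$ with units-outside-$p$ Frobenius eigenvalues on stalks, the eigenvalues on $H^i_c(V,N)$ are units outside $p$. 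A standard dévissage (stratify $V$ so that $N$ is lisse on a smooth open $U$ with $\dim(V\setminus U)<\dim V$, use excision and induction on $\dim V$; then fibre by curves and apply the Leray spectral sequence) reduces this to the case of a smooth curve $C$ with $N$ lisse.

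For a smooth curve, use Grothendieck's $L$-function identity
\[
\prod_{c\in |C|}\det(1-F_ct^{\deg c}\,|\,N_c)^{-1} \;=\; \prod_i \det(1-Ft\,|\,H^i_c(C,N))^{(-1)^{i+1}}.
\]
Both the Euler product on the left and its reciprocal have Taylor coefficients in $\overline{\BZ}[1/p]$, so by Fatou-type rationality the numerator and denominator of the rational function on the right have coefficients in $\overline{\BZ}[1/p]$; combined with Deligne's integrality theorem (Weil~II, Cor.~5.5.3) applied to both $N$ and the analogous object controlling the inverse $L$-function, this yields that the eigenvalues on each $H^i_c(C,N)$ are units outside~$p$.

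For $R^i f_*$: apply Verdier duality, $D_Y(Rf_*M)\simeq Rf_!(D_XM)$. A dévissage on a stratification of $X$ into smooth pieces shows that $\omega_X$ has cohomology sheaves with units-outside-$p$ Frobenius eigenvalues (on a smooth stratum of dimension $d$ one has $\omega=\BQbar_\ell(d)[2d]$, with eigenvalue $q^{-d}$). Hence $D_XM$ has units-outside-$p$ stalks; applying the $R^i f_!$ result and then $D_Y$ once more preserves the property and yields the claim for $R^i f_*M$. The main obstacle will be the smooth-curve case: extracting the units-outside-$p$ condition on Frobenius eigenvalues of $H^i_c(C,N)$, rather than mere integrality, from the Euler-product identity. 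The integrality half is exactly Deligne's integrality theorem, and the reciprocal half requires either a parallel argument for the inverse $L$-function or a careful analysis of the leading coefficients of the rational factorization.
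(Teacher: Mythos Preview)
Your treatment of $R^if_!$ is essentially the paper's: both reduce to Deligne's integrality arguments in SGA\,7, expos\'e~XXI (Th\'eor\`emes~5.2.2 and~5.4), which proceed via the $L$-function identity and d\'evissage to curves. Your sketch of how to upgrade integrality to the two-sided ``unit outside $p$'' statement is a little vague, but the approach is the right one.

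The $R^if_*$ argument, however, has a genuine gap. The inference ``$\omega_X$ has units-outside-$p$ stalks and $M$ does, hence $D_XM=R\mathcal{H}om(M,\omega_X)$ does'' is not valid: $R\mathcal{H}om$ is not computed stalkwise. Concretely, for a closed point $i_x:\{x\}\hookrightarrow X$ one has $i_x^*D_XM\simeq D(i_x^!M)$, so what you need is that the \emph{costalks} $i_x^!M$ have units-outside-$p$ eigenvalues. But if $j:X\setminus\{x\}\hookrightarrow X$ is the complementary open immersion, the triangle $i_{x*}i_x^!M\to M\to Rj_*j^*M$ shows that $i_x^!M$ is controlled by the stalk of $Rj_*(M|_{X\setminus\{x\}})$ at $x$ --- precisely an instance of the $Rf_*$ statement you are trying to prove. (Equivalently: for $i:Z\hookrightarrow X$ closed with open complement $j$, one has $i^!j_!L\simeq i^*Rj_*L[-1]$, so even the costalks of the basic building blocks $j_!L$ already require the $Rj_*$ case.) The same circularity recurs at the final ``apply $D_Y$ once more'' step. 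A d\'evissage or induction on dimension does not break the loop, because the relevant open immersion $j$ lives on a scheme of the same dimension as $X$.

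This is exactly why the paper does \emph{not} use Verdier duality for $R^if_*$. Instead it invokes Deligne's argument for Th\'eor\`eme~5.6 of SGA\,7, expos\'e~XXI, which handles $Rj_*$ directly by reducing (after compactification) to the situation where the complement is a divisor with normal crossings; Deligne's original argument was conditional on resolution of singularities over $\BF_p$, and the paper substitutes de~Jong's alterations \cite{dJ}. That geometric input is what your Verdier-duality shortcut is missing.
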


\begin{proof}
The statement about $R^if_!M$  immediately follows from Theorems 5.2.2 and 5.4 of \cite[expos\'e XXI]{SGA7} (which were proved by Deligne in a very nice way \emph{before} his works on the 
Weil conjecture). 

To prove the statement about $R^if_*M$, use the arguments from Deligne's proof of
Theorem~5.6 of \cite[expos\'e XXI]{SGA7}. This theorem was conditional, under the assumption
that Hironaka's theorem holds over~$\BF_p$. But instead of this assumption one can use
de Jong's result on alterations \cite[Theorem~4.1]{dJ}. % allows to give an unconditional proof.
%On the other hand, this can be proved as follows. Without loss of generality, we can assume that 
%$Y=\Spec\BF_q$, $X$ is an absolutely irreducible smooth separated curve over $Y=\Spec\BF_q$, 
%and $M$ is an irreducible lisse $\BQbar_{\ell}$-sheaf whose pullback to 
%$X\otimes_{\BF_q}\bar\BF_q$ is nontrivial. Then $R^if_!M=0$ for $i\ne 1$. By Theorem VII.6 from 
%\cite{La}, there exists a subfield $E\subset\BQbar_{\ell}$ finite over $\BQ$ such that for every closed 
%point $x\in X$ the polynomial  $\det (1-F_xt, M)$ has  coefficients in $E$ and for every 
%nonarchimedean place $\lambda$ of $E$ prime to $p$ there exists an (irreducible) lisse 
%$E_{\lambda}$-sheaf $M_{(\lambda )}$ compatible with $M$...
\end{proof}

\begin{rem}
The statement about $R^if_!M$ from Lemma~\ref{l:units} can also be deduced from \cite[Theo\-rem~VII.6]{La}.
\end{rem}

\begin{theorem}   \label{t:4functors}
Let $f:X\to Y$ be a morphism between schemes of finite type over $\BF_p$. Then
\begin{enumerate}
\item[(i)] the functor $f_!:\sD (X,\BQbar_{\ell})\to \sD (Y,\BQbar_{\ell})$ maps
 $\sD_{\mot}(X,\BQbar_{\ell})$ to  $\sD_{\mot}(Y,\BQbar_{\ell})$;
\item[(ii)]  the functor $f_*:\sD (X,\BQbar_{\ell})\to \sD (Y,\BQbar_{\ell})$ maps
 $\sD_{\mot}(X,\BQbar_{\ell})$ to  $\sD_{\mot}(Y,\BQbar_{\ell})$;
\item[(iii)]  the functors $f^*$ and $f^!$ map
 $\sD_{\mot}(Y,\BQbar_{\ell})$ to  $\sD_{\mot}(X,\BQbar_{\ell})$.
\end{enumerate}
\end{theorem}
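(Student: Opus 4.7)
The plan is to combine two ingredients: the analogous statement for the mixed category $\sD_{\mix}$, part of the standard formalism of \cite{De,BBD} recalled in Remark~\ref{r:mixed}(i--ii); and Lemma~\ref{l:units}, which supplies exactly the additional information (algebraic integrality outside of~$p$) needed to upgrade ``mixed'' to ``weakly motivic'' in the sense of Definition~\ref{d:mixmot}. Thus for each of the four functors, mixedness of the output is automatic, and only the eigenvalue condition on stalks of cohomology sheaves remains to be checked.

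The case of $f^*$ is immediate: at a closed point $x\in X$ one has $(f^*M)_x\cong M_{f(x)}$, and the geometric Frobenius at $x$ acts as a power of the Frobenius at $f(x)$; algebraic numbers which are units outside of~$p$ are closed under taking powers. For $f_!$ and $f_*$, I would reduce to the sheaf case by induction on the amplitude of $M$, using the truncation triangle $\tau_{\leq n-1}M\to\tau_{\leq n}M\to\scrH^n(M)[-n]\to$ and the long exact sequence of cohomology sheaves obtained by applying $f_!$ (resp.\ $f_*$). The integrality condition on Frobenius eigenvalues is preserved under extensions, since characteristic polynomials multiply across short exact sequences, so the reduction is legitimate; and the remaining case $M\in\Sh_{\mot}(X,\BQbar_\ell)$ is exactly what Lemma~\ref{l:units} delivers.

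The case of $f^!$ will be the main obstacle, because this functor is not defined stalk-wise and Lemma~\ref{l:units} does not apply to it directly. My plan is to factor $f$, Zariski-locally on $X$, as $g\circ i$ where $i$ is a closed immersion and $g$ is smooth---for instance by embedding a chosen affine open of $X$ into $\BA^n\times Y$ and projecting onto $Y$. Since weak motivicity is a local condition at closed points, proving the result for each such local factorization suffices, and by $f^!=i^!g^!$ we may treat the two sub-cases separately. For smooth $g$ of relative dimension~$d$ one has $g^!M=g^*M(d)[2d]$, so the result follows from the $f^*$ case together with the observation that Tate twisting by $(d)$ multiplies Frobenius eigenvalues at a closed point~$x$ by $q_x^{-d}$, a power of $p^{-1}$ and therefore a unit outside of~$p$. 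For a closed immersion $i\colon Z\hookrightarrow X$ with open complement $j\colon U\hookrightarrow X$, I would invoke the localization triangle
\[
i_*i^!M\to M\to Rj_*j^*M\to
\]
and argue as follows: by the already-established part (ii) and the $f^*$ case of (iii), $Rj_*j^*M$ is weakly motivic, hence so is $i_*i^!M$ as a cone; finally $i^!M=i^*i_*i^!M$ (as $i$ is a closed immersion), so $i^!M$ is weakly motivic by the $f^*$ case again.
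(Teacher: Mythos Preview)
Your proof is correct and follows essentially the same approach as the paper. The paper proves (i) and (ii) by combining Lemma~\ref{l:units} with Deligne's theorems on preservation of $\sD_{\mix}$ under $f_!$ and $f_*$ (exactly your two ingredients), and for (iii) it notes that $f^*$ is obvious while $f^!$ ``follows from (ii), just as in the proof of \cite[Corollary~1.5]{De-finitude}''---the argument you spell out (local factorization as closed immersion followed by smooth morphism, with the localization triangle handling the closed immersion via the already-proved $Rj_*$ case) is precisely what that citation unpacks to.
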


\begin{proof}
(i) Combine Lemma~\ref{l:units} with Theorem 3.3.1 from \cite{De}, which says that $f_!$ maps
 $\sD_{\mix}(X,\BQbar_{\ell})$ to  $\sD_{\mix}(Y,\BQbar_{\ell})$. 
 
 (ii) Combine Lemma~\ref{l:units} with Theorem 6.1.2 from \cite{De}, which says that $f_*$ maps
 $\sD_{\mix}(X,\BQbar_{\ell})$ to  $\sD_{\mix}(Y,\BQbar_{\ell})$. Alternatively, one can deduce
 (ii) from (i) using the same argument as in Deligne's proof of the above-mentioned theorem
 %One can use the same argument as in Deligne's proof of the fact that $f_*$ maps
 %$\sD_{\mix}(X,\BQbar_{\ell})$ to  $\sD_{\mix}(Y,\BQbar_{\ell})$, 
 (see \cite[Theorem 6.1.2]{De} or \cite[ch.~II, Theorem~9.4]{KW}).
 
%Alternatively, one can prove (ii) using de Jong's theorem on alterations
%(which was not available when the article \cite{De} was written). First of all, since the case where
%$f$ is proper is covered by (ii) we can assume that $f: X\mono Y$ is an open embedding. By 
%devissage, it suffices to prove that $f_*\E\in\sD_{\mot}(Y,\BQbar_{\ell})$ if $\E$ is a weakly motivic
%local system on $X$.
%
%By \cite[Theorem~4.1]{dJ},
 
 (iii) For $f^*$ the statement is obvious. For $f^!$ it follows from (ii),
 just as in the proof of \cite[Corol\-lary~1.5]{De-finitude}.
 \end{proof}

\begin{theorem}   \label{t:HOMs}
For any scheme $X$ of finite type over $\BF_p$, the full subcategory $\sD_{\mot}(X,\BQbar_{\ell})\subset\sD (X,\BQbar_{\ell})$ is  stable with respect to the functor $\otimes$, the Verdier duality functor
$\BD$, and the internal $\HOM$ functor.
\end{theorem}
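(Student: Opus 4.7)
The plan is to treat the three operations in order: $\otimes$, then $\BD$, and finally internal $R\HOM$, the last reducing formally to the first two. Tensor product is the easy case. Since $\BQbar_\ell$ is a field, derived tensor product is exact on geometric stalks, so for $K,L\in\sD_{\mot}(X,\BQbar_\ell)$ one has
\[
\mathcal{H}^n(K\otimes L)_{\bar x}\;=\;\bigoplus_{i+j=n}\mathcal{H}^i(K)_{\bar x}\otimes_{\BQbar_\ell}\mathcal{H}^j(L)_{\bar x}
\]
as a Frobenius module at each closed point $x$. Hence eigenvalues on the left are products of eigenvalues on the right, and since $\Gamma_x^{\mot}$---Weil numbers with respect to $q_x$ that are units outside $p$---is multiplicatively closed, $K\otimes L\in\sD_{\mot}$.

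For Verdier duality, first observe that $\omega_X=p^!\BQbar_\ell\in\sD_{\mot}(X,\BQbar_\ell)$ by Theorem~\ref{t:4functors}(iii), where $p:X\to\Spec\BF_p$. I would then prove stability of $\sD_{\mot}$ under $\BD$ by induction on $\dim X$. The base case $\dim X=0$ is trivial: $\BD$ is just linear dualization, and inverses of Weil numbers that are units outside $p$ retain both properties. For the inductive step, given $K$, choose a dense open smooth $j:U\hookrightarrow X$ on which all cohomology sheaves of $j^*K$ are lisse, with complement $i:Z\hookrightarrow X$ of strictly smaller dimension. Applying $\BD$ to the localization triangle $j_!j^*K\to K\to i_*i^*K\to[1]$ and using the identities $\BD j_!=Rj_*\BD$ and $\BD i_*=i_*\BD$, the problem reduces to showing $\BD(j^*K)\in\sD_{\mot}(U)$ and $\BD(i^*K)\in\sD_{\mot}(Z)$. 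The second follows from the inductive hypothesis (after first placing $i^*K$ in $\sD_{\mot}(Z)$ via Theorem~\ref{t:4functors}(iii)). The first can be computed directly on the smooth variety $U$ of dimension $d$: there $\BD=R\HOM(-,\BQbar_\ell(d)[2d])$, so the cohomology sheaves of $\BD(j^*K)$ are duals of lisse sheaves Tate-twisted by $d$; their Frobenius eigenvalues at a closed point $u\in U$ are $q_u^{-d}\alpha^{-1}$ with $\alpha$ a motivic eigenvalue of $j^*K$ at $u$, and $q_u^{-d}\alpha^{-1}\in\Gamma_u^{\mot}$ since inverses and multiplication by powers of $p$ preserve this set. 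Propagating motivicity through $Rj_*$ and $i_*$ by Theorem~\ref{t:4functors}(ii), and noting that $\sD_{\mot}$ is closed under extensions (as $\Gamma^{\mot}$ is stable under passage to Frobenius subquotients), the triangle delivers $\BD K\in\sD_{\mot}$.

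The internal $R\HOM$ case then follows from biduality and the identity $R\HOM(K,L)=\BD(K\otimes\BD L)$: apply the $\BD$ step to get $\BD L\in\sD_{\mot}$, then the tensor-product step to get $K\otimes\BD L\in\sD_{\mot}$, then the $\BD$ step once more. The main obstacle is clearly the Verdier-duality step; unlike $\otimes$, $\BD$ does not act pointwise on Frobenius data away from the smooth locus, so one needs the stratification argument and essential input from Theorem~\ref{t:4functors} to reduce to a situation where $\BD$ is computable explicitly.
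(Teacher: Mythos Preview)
Your proof is correct and follows essentially the same approach that the paper points to: the tensor case is the obvious pointwise check, while $\BD$ (and hence $R\HOM$) is handled by a d\'evissage via stratification that reduces to the smooth locus, where duality is explicit, and then invokes the stability of $\sD_{\mot}$ under $Rj_*$ and $i_*$ from Theorem~\ref{t:4functors}. The paper's own proof merely records that this is the standard argument from \cite[\S II.12]{KW} and \cite[Corollary~1.6]{De-finitude}; you have spelled it out.
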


\begin{proof}
The statement for $\otimes$ is obvious. 
The other two statements follow from Theorem~\ref{t:4functors}, just as in 
\cite[\S II.12]{KW} and in the proof of \cite[Corollary 1.6]{De-finitude}.
\end{proof}

\begin{defin}   \label{d:equiv}
Say that two invertible $\BQbar_{\ell}$-sheaves $A$ and $A'$ on $\Spec\BF_p$ are equivalent if
$A'A^{-1}$ is weakly motivic. Let $S$ denote the set of equivalence classes.
\end{defin}

\begin{rem}   \label{r:S}
The set $S$ equipped with the operation of tensor product is an abelian group. It is easy to see
that the abelian group $S$ is a vector space over $\BQ$.
\end{rem}

\begin{theorem}     \label{t:bigoplus}
Let $\pi:X\to\Spec\BF_p$ be a morphism of finite type.
For any invertible $\BQbar_{\ell}$-sheaf $A$ on $\Spec\BF_p$ let
$\sD_A(X,\BQbar_{\ell})\subset\sD(X,\BQbar_{\ell})$ be the essential image of 
$\sD_{\mot}(X,\BQbar_{\ell})$ under the functor of tensor multiplication by $\pi^*A$
(clearly $\sD_A(X,\BQbar_{\ell})$ depends only on the class of $A$ in $S$).
Then %the category $\sD(X,\BQbar_{\ell})$ admits a decomposition
\begin{equation}   \label{e:derived-bigoplus}
\sD(X,\BQbar_{\ell})=\bigoplus_{A\in S}\sD_A(X,\BQbar_{\ell})\, .
\end{equation}
\end{theorem}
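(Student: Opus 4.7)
The plan is to establish two ingredients: (A) mutual orthogonality between the subcategories $\sD_A$ for distinct classes $A\in S$, and (B) that every indecomposable object of $\sD(X,\BQbar_{\ell})$ lies in $\sD_A$ for some $A\in S$. Combined with Krull--Schmidt in $\sD(X,\BQbar_{\ell})$ (which applies because morphism spaces are finite-dimensional $\BQbar_{\ell}$-vector spaces), these will imply the direct sum decomposition \eqref{e:derived-bigoplus}, with uniqueness of the summands guaranteed by the orthogonality.

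For (A), I would write $K=\pi^*A\otimes M$ and $K'=\pi^*A'\otimes M'$ with $M,M'\in\sD_{\mot}(X,\BQbar_{\ell})$, and exploit invertibility of $\pi^*A,\pi^*A'$ together with the projection formula to obtain
\[
R\Hom_{\sD(X,\BQbar_{\ell})}(K,K')=R\Gamma\bigl(\Spec\BF_p,\,(A^{-1}A')\otimes N\bigr),\qquad N:=R\pi_* R\scrHom_X(M,M').
\]
By Theorems~\ref{t:HOMs} and~\ref{t:4functors}(ii), $N$ is weakly motivic on $\Spec\BF_p$, so every Frobenius eigenvalue on the cohomology of $N$ is a Weil number with respect to $p$. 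If $A\not\equiv A'$, then by definition the Frobenius eigenvalue $\beta$ of $A^{-1}A'$ is not a Weil number; consequently $\beta\mu\neq 1$ for every Weil number $\mu$, so $F-1$ is invertible on each cohomology of $(A^{-1}A')\otimes N$. Hence $R\Gamma(\Spec\BF_p,-)$ (which is computed by Frobenius invariants and coinvariants) vanishes, giving $\Hom_{\sD(X,\BQbar_{\ell})}(K,K'[n])=0$ for every $n\in\BZ$.

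Next, the vanishing of $\Ext^1$ between $\sD_{A'}$ and $\sD_{A''}$ for $A'\neq A''$ implies that any distinguished triangle mixing these two subcategories splits. Combining this with Krull--Schmidt, perverse truncation, and Jordan--H\"older filtrations on perverse cohomology, (B) reduces to the case of a simple perverse sheaf $j_{!*}(\F[\dim Z])$, where $j:Z\hookrightarrow X$ is a locally closed immersion of a smooth irreducible subscheme and $\F$ is an irreducible lisse $\BQbar_{\ell}$-sheaf on $Z$. Since weakly motivic complexes are preserved by $j_!$ and $j_*$ (Theorem~\ref{t:4functors}), and hence by $j_{!*}$, the problem finally reduces to the key claim that every irreducible lisse $\BQbar_{\ell}$-sheaf $\F$ on a smooth connected scheme $X$ of finite type over $\BF_p$ lies in $\sD_A$ for some $A\in S$.

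The hardest part will be this key claim. My plan is to use Lafforgue's theorem \cite[Theorem~VII.6]{La} on curves---according to which an irreducible lisse $\BQbar_{\ell}$-sheaf with finite-order determinant on a smooth curve over $\BF_p$ is pure of weight~$0$ with Frobenius eigenvalues integral over $\BZ[p^{-1}]$---together with Theorem~\ref{t:my_result} to extend the analysis to higher-dimensional $X$. Given $\F$ of rank $r$, I would first seek $A\in S$ such that $\F\otimes(\pi^*A)^{-1}$ has finite-order determinant; using that $S$ is a $\BQ$-vector space (Remark~\ref{r:S}), this is achieved by extracting in $S$ an $r$-th root of the pullback component of $\det\F$, a rank-$1$ sheaf whose decomposition as (finite-order character)$\otimes$(pullback of a character of $\hat\BZ$) follows from Lafforgue's theorem applied to sufficiently many curves. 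The twisted sheaf $\F\otimes(\pi^*A)^{-1}$ can then be checked to be weakly motivic curve-by-curve, with Theorem~\ref{t:my_result} used to glue the curve-level information into a global conclusion.
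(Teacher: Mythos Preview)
Your overall architecture---orthogonality plus generation, with generation reduced to the case of an irreducible lisse sheaf---is the same as the paper's, and your orthogonality argument (A) is essentially identical: the paper writes $\Ext^i(M_1\otimes\pi^*A,M_2)=\Ext^i(A,\pi_*\HOM(M_1,M_2))$, notes that $\pi_*\HOM(M_1,M_2)$ is weakly motivic by Theorems~\ref{t:HOMs} and~\ref{t:4functors}(ii), and concludes that nonvanishing forces $A$ to be weakly motivic.

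For generation, the paper's route is shorter than yours. Rather than Krull--Schmidt, perverse truncation, and $j_{!*}$, the paper simply observes that $\sD(X,\BQbar_{\ell})$ is generated as a triangulated category by objects $i_!\E$ with $i:Y\hookrightarrow X$ locally closed, $Y$ normal connected, and $\E$ an irreducible lisse sheaf on $Y$; combined with the already-proved orthogonality this yields the direct sum decomposition. The key claim about such $\E$ is then dispatched in two lines: by \cite[\S1.3.6]{De} there exists $A$ with $\det(\E\otimes\pi^*A^{-1})$ of finite order, and by \cite[Proposition~VII.7]{La} an irreducible lisse $\BQbar_{\ell}$-sheaf with finite-order determinant on a normal scheme over $\BF_p$ is pure of weight~$0$ with Weil-number Frobenius eigenvalues, hence weakly motivic. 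Note that Lafforgue's Proposition~VII.7 already applies in arbitrary dimension (via a Bertini argument internal to \cite{La}).

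Your proposed handling of the key claim is where you diverge, and it is overcomplicated in a way that suggests some confusion. The decomposition of $\det\F$ as (finite-order character)$\otimes$(pullback from $\Spec\BF_p$) is \cite[\S1.3.6]{De}---an elementary consequence of the fact that the geometric monodromy of an irreducible sheaf is semisimple---and is not a consequence of Lafforgue on curves. And once the twist has finite-order determinant, ``weakly motivic'' is a \emph{pointwise} condition on $|X|$: restricting to a curve through each closed point and invoking Lafforgue there suffices trivially, with nothing to ``glue''. Theorem~\ref{t:my_result} plays no role here; it concerns the existence of $\ell'$-adic companions, which is orthogonal to the question of whether Frobenius eigenvalues are Weil numbers.
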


\begin{proof}
(i) Let us prove that the triangulated category 
$\sD(X,\BQbar_{\ell})$ is generated by the subcategories $\sD_A(X,\BQbar_{\ell})$.
Clearly the triangulated category $\sD(X,\BQbar_{\ell})$ is generated by objects of the from
$i_!\E$, where $i:Y\mono X$ is a locally closed embedding with $Y$ normal connected and
$\E$ is an irreducible lisse $\BQbar_{\ell}$-sheaf on $Y$. So it remains to show that for
any such $Y$ and $\E$ there exists an invertible $\BQbar_{\ell}$-sheaf $A$ on $\Spec\BF_p$
such that $\E\otimes \pi^* A^{-1}$ is weakly motivic. By \cite[\S 1.3.6]{De}, there exists 
$A$ such that the determinant of $\E\otimes \pi^* A^{-1}$ has finite order.
Since $\E\otimes \pi^* A^{-1}$ is an irreducible lisse $\BQbar_{\ell}$-sheaf whose determinant
has finite order it is weakly motivic (and pure) by a result of Lafforgue \cite[Proposition VII.7]{La}.

%The triangulated category $\sD(X,\BQbar_{\ell})$ is generated by objects of the from
%$i_!\E$, where $i:Y\mono X$ is a locally closed embedding with $Y$ normal and
%$\E$ is an irreducible lisse $\BQbar_{\ell}$-sheaf on $Y$. By \cite{?}, $\E\in\sD_A(Y,\BQbar_{\ell})$
%for some $A\in S$. So $i_!\E\in\sD_A(X,\BQbar_{\ell})$. Thus the triangulated category 
%$\sD(X,\BQbar_{\ell})$ is generated by the subcategories $\sD_A(X,\BQbar_{\ell})$.
%
%L.~Lafforgue \cite[Corollary VII.8]{La} refers to \cite[Theorem 3.4.1(i)]{De}
%(but in loc. cit. the statements are not about the \emph{derived} category).

(ii) It remains to show that the subcategories $\sD_A(X,\BQbar_{\ell})$ are orthogonal to each other.
In other words, we have to prove that if $M_1,M_2\in\sD_{\mot}(X,\BQbar_{\ell})$, $A$ is
an invertible  $\BQbar_{\ell}$-sheaf on $\Spec\BF_p$ and $\Ext^i(M_1\otimes\pi_*A,M_2)\ne 0$ for
some $i$ then $A$ is weakly motivic. But 
\begin{equation}  \label{e:adjunct}
\Ext^i(M_1\otimes\pi_*A,M_2)=\Ext^i(A,\pi_*\HOM (M_1,M_2))\, ,
\end{equation}
and  $\pi_*\HOM (M_1,M_2)$ is weakly motivic by Theorems~\ref{t:HOMs}  and \ref{t:4functors}(ii). So if the r.h.s. of \eqref{e:adjunct} is nonzero then $A$ has to be weakly motivic.
\end{proof}

As before, we write $\Sh (X,\BQbar_{\ell})$ for the category of $\BQbar_{\ell}$-sheaves on $X$.
Let $\Perv (X,\BQbar_{\ell})\subset\sD(X,\BQbar_{\ell})$ denote the category of perverse 
$\BQbar_{\ell}$-sheaves.

\begin{cor}   \label{c:bigoplus}
%\begin{enumerate}
%\item[(i)] 
One has 
\begin{equation}   \label{e:Sh-bigoplus}
\Sh (X,\BQbar_{\ell})=\bigoplus_{A\in S}\Sh_A(X,\BQbar_{\ell}),  
\quad\quad \Sh_A(X,\BQbar_{\ell}):=\Sh (X,\BQbar_{\ell})\cap\sD_A(X,\BQbar_{\ell}),
\end{equation}

\begin{equation} \label{e:Perv-bigoplus}
\Perv (X,\BQbar_{\ell})=\bigoplus_{A\in S}\Perv_A(X,\BQbar_{\ell}),
\quad\quad \Perv_A(X,\BQbar_{\ell}):=\Perv (X,\BQbar_{\ell})\cap\sD_A(X,\BQbar_{\ell}),
\end{equation}
where $S$ is as in Definition~\ref{d:equiv}.
%where $\Sh_A(X,\BQbar_{\ell}):=\Sh (X,\BQbar_{\ell})\cap\sD_A(X,\BQbar_{\ell})$ and
%$\Perv_A(X,\BQbar_{\ell}):=\Perv (X,\BQbar_{\ell})\cap\sD_A(X,\BQbar_{\ell})$.
%\item[(ii)]  
%\end{enumerate}
\end{cor}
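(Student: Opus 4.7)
The plan is to deduce both decompositions directly from Theorem~\ref{t:bigoplus} by checking that the decomposition \eqref{e:derived-bigoplus} of $\sD(X,\BQbar_\ell)$ is compatible with the standard and perverse t-structures. Once we know that each subcategory $\sD_A(X,\BQbar_\ell)\subset\sD(X,\BQbar_\ell)$ is stable under the corresponding truncation functors, the decomposition of a single object $M\in\Sh(X,\BQbar_\ell)$ or $M\in\Perv(X,\BQbar_\ell)$ into its components $M_A\in\sD_A(X,\BQbar_\ell)$ will automatically have each component in the heart, giving $M_A\in\Sh_A(X,\BQbar_\ell)$ or $\Perv_A(X,\BQbar_\ell)$.

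For the standard t-structure \eqref{e:Sh-bigoplus}, the key observation is that $\pi^*A$ is a lisse rank-one sheaf on $X$, so the functor $(-)\otimes\pi^*A$ is exact and commutes with $\scrH^i$. Writing $\sD_A(X,\BQbar_\ell)=\sD_{\mot}(X,\BQbar_\ell)\otimes\pi^*A$, stability of $\sD_A$ under $\tau^{\le 0}$ and $\tau^{\ge 0}$ reduces to the same for $\sD_{\mot}$, which is immediate from Definition~\ref{d:motivic}: an object belongs to $\sD_{\mot}(X,\BQbar_\ell)$ precisely when each of its cohomology sheaves lies in $\Sh_{\mot}(X,\BQbar_\ell)$, and this condition is manifestly preserved by standard truncation. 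Hence if $M\in\Sh(X,\BQbar_\ell)$ and $M=\bigoplus_A M_A$ is its decomposition from Theorem~\ref{t:bigoplus}, then $\scrH^i(M_A)\in\Sh_A(X,\BQbar_\ell)$; since the decomposition \eqref{e:derived-bigoplus} is unique and $\bigoplus_A\scrH^i(M_A)=\scrH^i(M)=0$ for $i\ne 0$, each $M_A$ is concentrated in degree zero.

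For the perverse t-structure \eqref{e:Perv-bigoplus} the argument is the same, but the main point to verify is that $\sD_{\mot}(X,\BQbar_\ell)$ is stable under perverse truncation $^p\tau^{\le 0}$ and $^p\tau^{\ge 0}$ (again, tensoring by the lisse rank-one sheaf $\pi^*A$ is perverse t-exact and commutes with the perverse cohomology functors $^p\scrH^i$). This is where Theorem~\ref{t:4functors} enters: the perverse t-structure on $\sD(X,\BQbar_\ell)$ is glued from the shifted standard t-structures on strata via the six operations $i^*,i^!,i_*,i_!,j^*,j_!$ for locally closed embeddings (see \cite{BBD}). Since $\sD_{\mot}(X,\BQbar_\ell)$ is closed under all six operations by Theorem~\ref{t:4functors} and under the standard truncations on the strata by the argument above, it is closed under the perverse truncations as well, and therefore the perverse cohomology sheaves $^p\scrH^i(M_A)$ of the components of a perverse sheaf $M$ lie in $\Perv_A(X,\BQbar_\ell)$. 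The same uniqueness-of-decomposition argument then forces $M_A\in\Perv_A(X,\BQbar_\ell)$, concluding the proof. The principal step requiring care is the compatibility of $\sD_{\mot}$ with perverse truncation, but this is a formal consequence of Theorem~\ref{t:4functors} given the BBD glueing construction.
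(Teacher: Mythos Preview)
Your argument is correct, but it is considerably more elaborate than what is actually needed. The paper proves the corollary in one line: since $\Sh(X,\BQbar_\ell)$ and $\Perv(X,\BQbar_\ell)$ are hearts of t-structures on $\sD(X,\BQbar_\ell)$, they are closed under direct sums and direct summands; hence if $M$ lies in the heart and $M=\bigoplus_{A}M_A$ is the (finite) decomposition from Theorem~\ref{t:bigoplus}, each $M_A$ is a direct summand of $M$ and therefore also lies in the heart, i.e.\ $M_A\in\Sh_A$ (resp.\ $\Perv_A$). No truncation stability, no glueing, no Theorem~\ref{t:4functors} is required.

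What you have effectively done is prove Proposition~\ref{p:t-structure}(i) (stability of $\sD_{\mot}$ under perverse truncation) en route to the corollary. The paper orders things differently: it derives the corollary cheaply from the direct-summand observation, and then obtains Proposition~\ref{p:t-structure} either as a formal consequence of the decompositions \eqref{e:derived-bigoplus} and \eqref{e:Perv-bigoplus}, or (as an alternative proof) via Theorem~\ref{t:4functors} and the BBD argument you invoked. So your route works, but it front-loads a nontrivial ingredient that the corollary itself does not need.
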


\begin{proof}
This follows from Theorem~\ref{t:bigoplus} because 
the subcategories $\Sh (X,\BQbar_{\ell})\subset\sD(X,\BQbar_{\ell})$ and 
$\Perv (X,\BQbar_{\ell})\subset\sD(X,\BQbar_{\ell})$ are 
closed under direct sums and direct summands.
\end{proof}

\begin{rem}   \label{r:A=0}
The category $\Perv_{\mot}(X,\BQbar_{\ell}):=\Perv (X,\BQbar_{\ell})\cap\sD_{\mot}(X,\BQbar_{\ell})$ is one of the direct summands in the decomposition \eqref{e:Perv-bigoplus} (it corresponds to the
trivial $A$). Similarly, $\Sh_{\mot}(X,\BQbar_{\ell})$ is one of the summands in 
 \eqref{e:Sh-bigoplus} and $\sD_{\mot}(X,\BQbar_{\ell})$ is one of the summands in 
 \eqref{e:derived-bigoplus}.

\end{rem}

\begin{prop}  \label{p:t-structure}
\begin{enumerate}
\item[(i)] The full subcategory $\sD_{\mot}(X,\BQbar_{\ell})\subset\sD (X,\BQbar_{\ell})$ is stable with respect to the perverse truncation functors $\tau_{\le i}$ and $\tau_{\ge i}$.
\item[(ii)] A perverse $\BQbar_{\ell}$-sheaf is weakly motivic if and only if each of its
irreducible subquotients is.
\end{enumerate}
\end{prop}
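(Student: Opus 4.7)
The plan is to deduce both parts from the direct sum decompositions of Theorem~\ref{t:bigoplus} and Corollary~\ref{c:bigoplus}, using as the key preliminary input the observation that tensoring with $\pi^*A$ preserves the perverse t-structure on $\sD(X,\BQbar_{\ell})$. This observation holds because $\pi^*A$ is lisse of rank $1$ concentrated in cohomological degree $0$, so $M\mapsto M\otimes\pi^*A$ does not alter the supports of the cohomology sheaves $\mathcal{H}^j(M)$ nor of those of the Verdier dual (since $\BD(M\otimes\pi^*A)=\BD(M)\otimes\pi^*A^{-1}$), and hence preserves the defining support conditions for perversity. In particular, each summand $\sD_A(X,\BQbar_{\ell})$ is preserved by $\tau_{\le i}$ and $\tau_{\ge i}$.

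For part (i), I would take $M\in\sD_{\mot}(X,\BQbar_{\ell})$ and consider the distinguished triangle $\tau_{\le i}M\to M\to\tau_{>i}M$. By Theorem~\ref{t:bigoplus}, each vertex decomposes canonically as a direct sum indexed by $A\in S$, and since $\sD(X,\BQbar_{\ell})=\bigoplus_{A\in S}\sD_A(X,\BQbar_{\ell})$ as triangulated categories, the triangle splits into a distinguished triangle in each $\sD_A$. Writing $M_A$, $N_A=(\tau_{\le i}M)_A$, $N'_A=(\tau_{>i}M)_A$, the object $N_A$ lies in perverse degree $\le i$ and $N'_A$ in perverse degree $>i$. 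For $A$ nontrivial in $S$ one has $M_A=0$, so the triangle collapses to $N'_A\cong N_A[1]$; but $N_A[1]$ then lies simultaneously in perverse degrees $\le i-1$ and $>i$, forcing $N_A=N'_A=0$. Hence $\tau_{\le i}M$ and $\tau_{>i}M$ have only their $A=0$ component, i.e.~lie in $\sD_{\mot}$; the case of $\tau_{\ge i}$ follows by a shift of indices.

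For part (ii), I would apply Corollary~\ref{c:bigoplus}: $\Perv(X,\BQbar_{\ell})$ is the direct sum of the abelian subcategories $\Perv_A$, each of which is automatically a Serre subcategory of $\Perv(X,\BQbar_{\ell})$ as a direct summand. By Remark~\ref{r:A=0}, $\Perv_{\mot}=\Perv(X,\BQbar_{\ell})\cap\sD_{\mot}(X,\BQbar_{\ell})$ is the summand indexed by the trivial class in $S$. Any perverse sheaf $P$ has finite length, so writing $P=\bigoplus_A P_A$ with $P_A\in\Perv_A$ of finite length, the irreducible subquotients of $P$ are exactly the disjoint union over $A$ of those of the $P_A$, and each of the latter lies in $\Perv_A$. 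Consequently all irreducible subquotients of $P$ belong to $\Perv_{\mot}$ if and only if $P_A=0$ for every nontrivial $A$, if and only if $P\in\Perv_{\mot}$.

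The only nontrivial step is the preliminary assertion that $\otimes\pi^*A$ preserves the perverse t-structure; once this is granted, both parts follow from formal manipulations with the direct sum decompositions of the triangulated and abelian categories involved, so I do not anticipate further obstacles.
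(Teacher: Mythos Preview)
Your proof is correct and follows the paper's primary approach, which derives both parts from the decompositions \eqref{e:derived-bigoplus}, \eqref{e:Perv-bigoplus}, and Remark~\ref{r:A=0} (the paper also notes the alternative route via Theorem~\ref{t:4functors} and the argument of \cite[5.1.6--5.1.7]{BBD}). One small point: the inference ``in particular, each summand $\sD_A$ is preserved by $\tau_{\le i}$'' does not follow from t-exactness of $\otimes\,\pi^*A$ alone (that would be circular, since it presupposes stability of $\sD_{\mot}$); what actually makes your triangle argument work is the elementary fact that aisles of a t-structure are closed under direct summands, so $N_A$, being a summand of $\tau_{\le i}M$, automatically lies in ${}^p\sD^{\le i}$, and likewise for $N'_A$.
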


\begin{proof}
This follows from \eqref{e:derived-bigoplus}, \eqref{e:Perv-bigoplus},
and Remark~\ref{r:A=0}. On the other hand, the proposition
follows from Theorem~\ref{t:4functors}, 
just as in the proof of the corresponding statements for mixed sheaves %,see 
\cite[5.1.6-5.1.7]{BBD}. 
\end{proof}

%\cite[5.1.6-5.1.7]{BBD}: $\sD_{\mix}(X,\BQbar_{\ell})$ is stable with respect to the perverse
%truncation functors $\tau_{\le i}$ and $\tau_{\ge i}$.
%
%\cite[Proposition 5.3.1]{BBD}: a subquotient of a mixed perverse sheaf is mixed.

%Finally, let us formulate and prove the statement about stability of 
%$\sD_{\mot}$ with respect to the nearby cycle functor $\Psi$.

Finally, let us consider the nearby cycle functor $\Psi$.
Let $S$ be a smooth curve over $\BF_p$, $\bar s$ a geometric point of $S$
corresponding to a closed point $s\in S$, and $\Gamma:=\Gal (\bar s/s)$. Let $\eta$ be the generic point of the Henselization 
$S_s$, $\bar\eta$ a geometric point over $\eta$ of the strict Henselization $S_{\bar s}$,
 and $G:=\Gal (\bar\eta /\eta )$.
Then for any scheme $X$ of finite type over $S$
one has the nearby cycle functor 
$\Psi :\sD (X\setminus X_s,\BQbar_{\ell})\to\sD_G (X_{\bar s},\BQbar_{\ell})$,
where $\sD_G $ stands for the equivariant derived category. Once we fix a splitting of
the epimorphism $G\epi\Gamma$ we get the restriction functor 
$\sD_G (X_{\bar s},\BQbar_{\ell})\to\sD_{\Gamma} (X_{\bar s},\BQbar_{\ell})=\sD(X_s, \BQbar_{\ell})$.
It is easy to check that the preimage of $\sD_{\mot}(X_s, \BQbar_{\ell})$ in 
$\sD_G (X_{\bar s},\BQbar_{\ell})$ does not depend on the choice of the splitting. Denote it by
$\sD_{G,\mot} (X_{\bar s},\BQbar_{\ell})$. 

\begin{theorem}
The nearby cycle functor $\Psi :\sD (X\setminus X_s,\BQbar_{\ell})\to\sD_G (X_{\bar s},\BQbar_{\ell})$
maps $\sD_{\mot} (X\setminus X_s,,\BQbar_{\ell})$ to $\sD_{G,\mot} (X_{\bar s},\BQbar_{\ell})$. 
\end{theorem}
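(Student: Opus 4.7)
The strategy is to express $\Psi$, up to passage to a cofinal system of finite \'etale covers of $\eta$, as a composition of two of the operations handled by Theorem~\ref{t:4functors}, namely $Rj_*$ and $i^*$. The ``mixed'' half of the conclusion (i.e.\ that eigenvalues of geometric Frobenii on stalks are Weil numbers of some half-integer weight) is already contained in the results of Deligne and Beilinson--Bernstein--Deligne on $\sD_{\mix}$ \cite[\S6.1]{De}, \cite{BBD}; what remains is to show that these eigenvalues are units outside $p$.

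First I would shrink the base: since $S$ is a smooth curve over $\BF_p$ I may replace $S$ by an affine Zariski neighborhood $U$ of $s$, so that $U$, $X_U$ and $X_s$ are all of finite type over $\BF_p$. For every open normal subgroup $H\subset G$, let $\eta_H\to\eta$ denote the corresponding finite \'etale cover, let $U_H$ be the normalization of $U$ in $\eta_H$, and let $s_H$ be the unique closed point of $U_H$ above $s$; note that $U_H$ is again of finite type over $\BF_p$. Write $i_H\colon X_{s_H}\hookrightarrow X_{U_H}$ and $j_H\colon X_{U_H}\setminus X_{s_H}\hookrightarrow X_{U_H}$ for the complementary inclusions, and set
\[
N_H\;:=\;i_H^*\,R(j_H)_*\bigl(M|_{X_{U_H}\setminus X_{s_H}}\bigr).
\]
By Theorem~\ref{t:4functors}(ii), $R(j_H)_*$ preserves $\sD_{\mot}$, and by (iii), $i_H^*$ does too. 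Hence $N_H\in\sD_{\mot}(X_{s_H},\BQbar_\ell)$.

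Second, I would identify $\Psi M$ with a limit of the $N_H$. The point is that the strict Henselization $\bar S_s$ of $S$ at $\bar s$ is the inverse limit of the $U_H$, so $X_{\bar S_s}=\varprojlim_H X_{U_H}$ and, by the usual compatibility of $\bar i^*R\bar j_*$ with filtered inverse limits of schemes with affine transition maps, the underlying complex of $\Psi M$ on $X_{\bar s}$ is the pullback of $N_H$ to $X_{\bar s}$ for any sufficiently small $H$ (or more precisely, the colimit, which stabilizes at any fixed degree). The group $G$ acts on the whole system and recovers the equivariant structure on $\Psi M$.

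Third, fix a splitting $\Gamma\hookrightarrow G$ and a closed point $x\in X_s$ with geometric point $\bar x$. Any lift $x_H$ of $x$ to a closed point of $X_{s_H}$ has residue field a finite extension of $k(x)$, and a suitable power of the geometric Frobenius $F_x$ (acting on $(\Psi M)_{\bar x}$ via the chosen splitting) agrees with the Frobenius of $x_H$ acting on $(N_H)_{\bar x_H}$. Since $N_H$ is weakly motivic, the latter eigenvalues are Weil numbers that are units outside $p$; taking roots preserves both properties, so the eigenvalues of $F_x$ on $(\Psi M)_{\bar x}$ lie in $\Gamma_x^{\mot}$. Independence of the splitting is built into the definition of $\sD_{G,\mot}$, so this gives $\Psi M\in\sD_{G,\mot}(X_{\bar s},\BQbar_\ell)$.

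The main obstacle is the step comparing the $G$-equivariant complex $\Psi M$ on $X_{\bar s}$ with the $\sD_\mot$-complexes $N_H$ on the finite-type schemes $X_{s_H}$: one must match up the ``geometric'' $G$-action (coming from $\Gal(\bar\eta/\eta)$) with the arithmetic Frobenii at closed points of $X_{s_H}$, and check that the comparison is compatible with the colimit over $H$. This is essentially a bookkeeping exercise with the definition of the equivariant derived category, but it is the only nontrivial point; once it is done the result is immediate from Theorem~\ref{t:4functors} applied to each $N_H$.
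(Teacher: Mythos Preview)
There is a genuine gap in your second step: the colimit $\varinjlim_H N_H$ does \emph{not} compute $\Psi M$ in the $\BQbar_\ell$-setting, and in particular does not stabilize. The pullback of $N_H$ to $X_{\bar s}$ is $R\Gamma(I_H,\Psi M)$ (this is the standard formula $i^*Rj_*=R\Gamma(I,\Psi)$ applied over $U_H$), and by the monodromy theorem the best one can arrange is that $I_H$ act \emph{unipotently} through its pro-$\ell$ quotient $\BZ_\ell(1)$, not trivially. Already for $X=S$ and $M=\BQbar_\ell$ one has $\Psi M=\BQbar_\ell[0]$, whereas each $N_H$ has $H^1\simeq\BQbar_\ell(-1)$ and every transition map on $H^1$ is an isomorphism (restriction of a nonzero continuous homomorphism $I_H\to\BQbar_\ell$ to the open subgroup $I_{H'}$ is again nonzero). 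The underlying reason is that the filtered colimit over $H$ does not commute with the inverse limit over $n$ defining $\ell$-adic coefficients; with $\BZ/\ell^n$-coefficients the colimit does give $\Psi(M_n)$, but the $H$ needed depends on $n$. Consequently the identification in your third step, matching a power of $F_x$ on $(\Psi M)_{\bar x}$ with Frobenius on $(N_H)_{\bar x_H}$, is not justified.

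The paper's proof starts from the same ingredient $i^*Rj_*M=R\Gamma(I,\Psi M)$ together with Theorem~\ref{t:4functors}, but accepts that after reducing to $M$ perverse with unipotent inertia this only gives that $(\Psi M)^I={}^{p}\!\scrH^0(i^*Rj_*M)$ is weakly motivic. The missing idea is then the monodromy filtration: the operators $(\sigma-1)^{r-1}$ embed each graded piece $(\Psi M)_r/(\Psi M)_{r-1}$ into a Tate twist of $(\Psi M)^I$, whence all of $\Psi M$ is weakly motivic by Proposition~\ref{p:t-structure}(ii). Your argument can be salvaged by inserting exactly this step after establishing that a single $N_H$ (for $H$ small enough that $I_H$ acts unipotently) is weakly motivic.
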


The proof below is parallel to Deligne's proof of a similar statement for $\sD_{\mix}$, see 
\cite[Theorem 6.1.13]{De}. To slightly simplify the argument, we use the theory of perverse sheaves
(which did not exist when the article \cite{De} was written).

\begin{proof}
The idea is to use the formula
\begin{equation}   \label{e:derived_invariants}
i^*j_*M=R\Gamma (I,\Psi M), \quad\quad M\in \sD (X\setminus X_s,,\BQbar_{\ell}).
\end{equation}
Here $i:X_s\to X$ and $j:X\setminus X_s\to X$ are the embeddings,
$I:=\Ker (G\epi\Gamma)$ is  the inertia group, and $R\Gamma (I,?)$ stands for the derived 
$I$-invariants.

%Let $I$ be the inertia group, i.e., $I:=\Ker (G\epi\Gamma)$.

We have to prove that if $M\in\sD (X\setminus X_s,\BQbar_{\ell})$ is weakly motivic then
so is $\Psi M$.  By the monodromy theorem, after a quasi-finite base change $S'\to S$
(which does not change $\Psi M$)
%replacing $S$ by a suitable smooth curve quasi-finite over $S$ 
we can assume that the action of $I$ on $\Psi M$ is unipotent. Then $I$ acts via its maximal pro-$\ell$ quotient $\BZ_{\ell}(1)$.

By Proposition~\ref{p:t-structure}(i), we can also assume that $M$ is perverse.
By \cite[Corollary~4.5]{Ill}, then $\Psi M$ is perverse. So formula~\eqref{e:derived_invariants} 
implies that
\begin{equation}  \label{e:invariants}
\scrH^0 (i^*j_*M)=(\Psi M)^I,
\end{equation}
where $\scrH^0$ stands for the 0-th perverse cohomology sheaf.
By Theorem ~\ref{t:4functors} and Proposition~\ref{p:t-structure}(i), the l.h.s. of  
\eqref{e:derived_invariants} is weakly motivic. So $(\Psi M)^I$ is weakly motivic.

Let $(\Psi M)_r$ be the maximal perverse subsheaf of  $\Psi M$ on which the action of
$I$ is unipotent of degree $r$. The operators $(\sigma -1)^{r-1}$, $\sigma\in I$, define
a monomorphism $(\Psi M)_r/(\Psi M)_{r-1}\mono (\Psi M)^I(1-r) $. Since $(\Psi M)^I$ is weakly motivic,
$\Psi M$ is weakly motivic by Proposition~\ref{p:t-structure}(ii).
%Since  $(\Psi M)_r/(\Psi M)_{r-1}$ is isomorphic to a perverse
%subsheaf of $(\Psi M)^I(-r)$ it is weakly motivic by Proposition~\ref{p:t-structure}(ii).
%Applying Proposition~\ref{p:t-structure}(ii) again we see that $\Psi M$ is weakly motivic.  
\end{proof}

\section{A corollary of Poonen's ``Bertini theorem over finite fields"}  \label{s:Bertini-Poonen}
In this appendix we justify Remark~\ref{r:Bertini}(ii) by proving the following proposition.

\begin{prop}   \label{p:justifying_Bertini-remark}
Let $Y\subset\BP^n_{\BF_q}$ be an absolutely irreducible closed subvariety of dimension
$d\ge 2$. Let $F'\subset F\subset Y$ be closed subschemes such that $\dim F'\le d-2$, the subscheme $Y\setminus F'$ is smooth, and $F\setminus F'$ is a smooth divisor in $Y\setminus F'$.
Then there exists an irreducible curve $C\subset Y\setminus F'$ such that
$C\not\subset F$ and for any connected etale covering $W\to Y\setminus F$ tamely ramified
along the irreducible components of $F\setminus F'$ the scheme $W\times_YC$ is connected.
Moreover, given closed points $y_i\in Y\setminus F'$ and 1-dimensional subspaces
$l_i\subset T_{y_i}Y$, $1\le i\le m$, one can choose $C$ so that $y_i\in C$ and $T_{y_i}C=l_i$
for all $i\in \{1 ,\ldots ,m\}$.
\end{prop}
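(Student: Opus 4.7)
My plan is to construct $C$ as an iterated smooth hypersurface section of $Y$ in $\BP^n_{\BF_q}$, combining Poonen's ``Bertini theorem over finite fields'' \cite{Po} to meet the prescribed pointwise and transversality conditions with Grothendieck's Lefschetz theorem (in its tame version) to obtain the required surjectivity on fundamental groups.

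Embed $Y\hookrightarrow\BP^n_{\BF_q}$. I would construct hypersurfaces $H_1,\ldots,H_{d-1}$ successively, choosing each $H_i$ of sufficiently large degree so that $Y_i := Y\cap H_1\cap\cdots\cap H_i$ is absolutely irreducible of dimension $d-i$, is disjoint from $F'$, is smooth, meets $F\setminus F'$ transversally, and, for each $y_j\in Y_i$, contains $y_j$ with $l_j\subset T_{y_j}Y_i$. Theorem~1.2 of \cite{Po} supplies a positive density of hypersurfaces satisfying the smoothness, $F'$-avoidance, and finitely many tangency and passing-through conditions at the $y_j$; the classical Bertini theorem applied after base change to $\overline{\BF}_q$ ensures geometric irreducibility of the generic member, and since absolute irreducibility is an open condition on the linear system, the positive density from Poonen produces an $H_i$ over $\BF_q$ enjoying all the properties. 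Setting $C := Y_{d-1}$ yields a smooth, hence regular, irreducible curve in $Y\setminus F'$ meeting each $y_j$ tangentially to $l_j$ and transverse to $F\setminus F'$.

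Next, I would show that for any connected tame étale cover $W\to Y\setminus F$ (tame along the components of $F\setminus F'$), the pullback $W\times_Y C$ is connected. Since $C$ is transverse to $F\setminus F'$ and disjoint from $F'$, the pullback of such a $W$ is an étale cover of $C\setminus(C\cap F)$ tamely ramified along $C\cap F$, so the required statement is the surjectivity of the induced map $\pi_1^{\tame}(C\setminus(C\cap F))\to\pi_1^{\tame}(Y\setminus F)$ between the corresponding tame fundamental groups. This is a Lefschetz-type assertion: for a smooth hyperplane section of sufficiently large degree of a smooth quasi-projective variety with prescribed tame boundary, restriction induces a surjection on tame $\pi_1$. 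Iterating this $d-1$ times gives the desired surjectivity, and the Lefschetz condition cuts out a Zariski dense open subset of each linear system, which is compatible with the positive-density Poonen conditions used in the first step, so both can be arranged simultaneously by choosing the degrees of the $H_i$ large enough.

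The main obstacle is the tame Lefschetz statement itself: for covers without ramification restrictions this is classical (SGA~2, Exp.~XII), but for tame covers in positive characteristic with prescribed boundary one must either invoke a tame Lefschetz theorem in the literature (e.g.\ in the framework of Kerz and Schmidt \cite{KS2}) or argue directly via de~Jong alterations to reduce to a simple normal crossings situation. A subsidiary technical point is ensuring that at each iteration the ``bad locus'' (non-transverse intersection with $F\setminus F'$, passage through $F'$, failure to realize the prescribed tangent lines, loss of geometric irreducibility) is a thin enough subscheme of the linear system to be missed by a positive-density set of $\BF_q$-hypersurfaces; this is exactly the content of Poonen's theorem with local conditions at finitely many specified points.
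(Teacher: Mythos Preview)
Your construction of the curve $C$ via iterated Poonen hypersurface sections is essentially the paper's Lemma~\ref{l:Poonen}, so that half is fine.

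The connectivity argument, however, takes a different route from the paper, and the difference matters. You reduce to a surjectivity statement for tame fundamental groups and then appeal to a ``tame Lefschetz theorem'', which you correctly flag as the main obstacle: such a statement is not in SGA~2 and would require either a reference you do not pin down or a nontrivial reduction via alterations. The paper avoids this entirely. Given any connected tame cover $W\to Y\setminus F$, it passes to the normalization $\overline{W}$ of $Y$ in the function field of $W$; after adjusting the field of constants, $\overline{W}$ is absolutely irreducible. Since $C$ is a global complete intersection in $\BP^n$, the Fulton--Lazarsfeld connectivity theorem \cite[Theorem~2.1(A)]{FL} gives that $\overline{W}\times_Y C$ is connected. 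Tameness of $W$ along $F\setminus F'$ together with transversality of $C$ to $F\setminus F'$ implies (via Abhyankar's lemma) that $\overline{W}\times_Y C$ is smooth, so its open subset $W\times_Y C$ is connected as well.

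Two things this buys: first, no $\pi_1$ statement is needed---one works directly with the given cover. Second, the conclusion holds for \emph{every} smooth complete-intersection curve $C\subset Y\setminus F'$ transverse to $F\setminus F'$, not merely a generic one; so there is no issue of reconciling a Lefschetz open condition with the Poonen density set, and the two steps decouple cleanly (Lemma~\ref{l:geometricBertini} and Lemma~\ref{l:Poonen}). Your outline would likely go through if the tame Lefschetz input were supplied, but the paper's argument is both shorter and self-contained.
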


The proof is given in \S\S\ref{ss:geometricBertini}-\ref{ss:Poonen} below.

%\subsection{Notation}
\subsection{Applying a geometric Bertini theorem}  \label{ss:geometricBertini}
We will use the notation $\Pol_n$ for the set of homogeneous polynomials
in $n+1$ variables over $\BF_q\,$ (of all possible degrees). 
The scheme of zeros of $f\in\Pol_n$ in $\BP^n_{\BF_q}$ will be denoted by $V(f)$.

\begin{lem}   \label{l:geometricBertini}
Let $Y$, $F$, and $F'$ be as in Proposition~\ref{p:justifying_Bertini-remark}.
Let $C:=Y\cap V(f_1)\cap\ldots V(f_{d-1})$, where $f_1,\ldots ,f_{d-1}\in\Pol_n$. 
Suppose that $C$ is a smooth curve contained in $Y\setminus F'$ and meeting $F\setminus F'$
transversally. Then for any connected etale covering $W\to Y\setminus F$ tamely ramified
along the irreducible components of $F\setminus F'$ the scheme $W\times_YC$ is connected.
\end{lem}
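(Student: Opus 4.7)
The plan is to rephrase the conclusion as the surjectivity of a map on tame fundamental groups, and to establish it by iterating a Lefschetz-type theorem along the complete-intersection flag defining $C$.

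First I would reduce to the case where $W \to Y \setminus F$ is Galois with finite group $G$, by passing to its Galois closure (still connected, and still tame along $F \setminus F'$ by Abhyankar's lemma). In this situation $W \times_Y C$ is connected if and only if the composition
\[
\pi_1^{\tame}(C \setminus (C \cap F)) \longrightarrow \pi_1^{\tame}(Y \setminus F) \twoheadrightarrow G
\]
is surjective, so it suffices to prove surjectivity of the first arrow (tameness on the right taken along the irreducible components of $F \setminus F'$, which is the setting in which the middle tame fundamental group is defined).

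For this surjectivity I would proceed by induction on $d$ along the flag
\[
Y = Y_0 \supset Y_1 \supset \cdots \supset Y_{d-1} = C, \qquad Y_i := Y \cap V(f_1) \cap \cdots \cap V(f_i),
\]
showing at each step that $\pi_1^{\tame}(Y_{i+1} \setminus F) \to \pi_1^{\tame}(Y_i \setminus F)$ is surjective. This is the Lefschetz hyperplane theorem for the tame fundamental group: in characteristic zero it is classical (Zariski, Hamm--L\^{e}), and in positive characteristic it is available through the work of Grothendieck--Murre on tame $\pi_1$, combined with Abhyankar's lemma to handle the tame ramification at the transversal smooth intersections with $F \setminus F'$.

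The main obstacle is that only $C = Y_{d-1}$ is assumed smooth and properly positioned with respect to $F'$ and $F$; the intermediate $Y_i$ could be quite singular away from $C$. I would address this by restricting to a Zariski open neighborhood $U \subset Y$ of $C$ on which the entire flag $U_i := Y_i \cap U$ is smooth — such a $U$ exists because the transversality of $C$ to each $V(f_j)$ is an open condition along $C$ and $C$ avoids $F'$ by hypothesis. The smooth tame Lefschetz step then applies to the flag $U_0 \supset \cdots \supset U_{d-1} = C$, and the resulting surjectivity transfers back to the $Y_i$ because, in a neighborhood of $C$, each $Y_i$ is irreducible (containing the irreducible curve $C$) and an open immersion into a normal variety induces a surjection on tame fundamental groups once the prescribed ramification divisor is traced through correctly.
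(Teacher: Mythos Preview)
Your reduction to the surjectivity of $\pi_1^{\tame}(C\setminus (C\cap F))\to\pi_1^{\tame}(Y\setminus F)$ is fine, but the inductive Lefschetz step has a genuine gap. Lefschetz-type surjectivity for $\pi_1$ (tame or otherwise) needs the hyperplane section to be ample in a \emph{proper} ambient variety. Once you restrict to an open neighborhood $U$ of $C$, the divisor $V(f_{i+1})\cap U_i$ is no longer ample on $U_i$, and no Lefschetz theorem gives you the surjectivity of $\pi_1^{\tame}(U_{i+1}\setminus F)\to\pi_1^{\tame}(U_i\setminus F)$; so there is nothing to ``transfer back'' to the $Y_i$. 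If instead you try to run Lefschetz on the projective $Y_i$ themselves, these may be singular away from $C$, and in that generality there is no good notion of $\pi_1^{\tame}$ relative to the divisor $F$, let alone a Lefschetz theorem for it. (Grothendieck--Murre sets up tame fundamental groups for smooth pairs; it does not supply the Lefschetz step you need.)

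The paper bypasses all of this by moving upstairs. One normalizes $Y$ in the function field of $W$ to obtain a finite morphism $\overline{W}\to Y$ with $\overline{W}$ projective; after adjusting the field of constants one may assume $\overline{W}$ absolutely irreducible. The Fulton--Lazarsfeld connectedness theorem, applied to the finite morphism $\overline{W}\to Y\hookrightarrow\BP^n$ and the hypersurfaces $V(f_1),\ldots,V(f_{d-1})$, shows directly that $\overline{W}\times_Y C$ is connected---with no smoothness hypothesis on intermediate complete intersections and no tame $\pi_1$ in sight. Tameness of $W$ along $F\setminus F'$ together with the transversality of $C$ to $F\setminus F'$ then forces $\overline{W}\times_Y C$ to be smooth, so its nonempty open subscheme $W\times_Y C$ remains connected.
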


\begin{proof}
Let $\overline{W}\supset W$ be the normalization of $Y$ in the field of rational functions on $W$.
We can assume that the field of constants of $W$ equals $\BF_q$ (if it equals $\BF_{q^n}$ then
consider $W$ as a covering of $Y\otimes\BF_{q^n}$). Then $W$ and $\overline{W}$ are 
absolutely irreducible. So by Theo\-rem~2.1(A) from \cite{FL} (which is a Bertini theorem),
$\overline{W}\times_YC$ is (geometrically) connected. On the other hand, the tameness
and transversality assumptions imply that $\overline{W}\times_YC$ is smooth. So any open
subset of $\overline{W}\times_YC$ is connected. In particular, $W\times_YC$ is connected.
\end{proof}

\subsection{Applying  Poonen's theorem}   \label{ss:Poonen}
Lemma~\ref{l:geometricBertini} shows that to prove Proposition~\ref{p:justifying_Bertini-remark},
it suffices to construct $f_1,\ldots ,f_{d-1}\in\Pol_n$ such that 
$Y\cap V(f_1)\cap\ldots V(f_{d-1})$ is a smooth curve contained in $Y\setminus F'$, meeting 
$F\setminus F'$ transversally, and passing through the points $y_i$ in the given directions $l_i$.
By induction, it suffices to prove the following statement.

\begin{lem}   \label{l:Poonen}
Let $Y\subset\BP^n_{\BF_q}$ be a closed subcheme of pure dimension $d\ge 2$.
Let $F'\subset F\subset Y$ be closed subschemes such that $\dim F'\le d-2$, the subscheme $Y\setminus F'$ is smooth, and $F\setminus F'$ is a smooth divisor in $Y\setminus F'$.
Then there exists $f\in\Pol_n$ such that $V(f)\cap Y$ has pure dimension $d-1$, 
$\dim (V(f)\cap F)\le d-2$, $\dim (V(f)\cap F')\le d-3$, and the schemes
$V(f)\cap (Y\setminus F')$ and $V(f)\cap (F\setminus F')$ are smooth. Moreover, 
given closed points $y_i\in Y\setminus F'$ and hyperplanes $H_i\subset T_{y_i} Y$, 
$1\le i\le m$, one can choose $f$ so that $y_i\in V(f)$ and 
$T_{y_i}(V(f)\cap Y)=H_i$ for all $i\in \{1 ,\ldots ,m\}$.
\end{lem}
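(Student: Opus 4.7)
The plan is to apply Poonen's Bertini theorem over finite fields together with its refinements permitting (a) prescribed first-order behavior at finitely many closed points and (b) control of the intersection with non-smooth closed subschemes. I would fix a large degree $D$ and show that four natural subsets of $\Pol_n$ (restricted to polynomials of degree $D$) each have positive density in the sense of Poonen, so that their intersection is nonempty and yields the required $f$.

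By Poonen's main theorem applied to the smooth quasiprojective variety $Y \setminus F'$ (of dimension $d$), the set $S_1$ of $f$ for which $V(f) \cap (Y \setminus F')$ is smooth of pure dimension $d-1$ has positive density as $D \to \infty$; similarly, applied to $F \setminus F'$, the set $S_2$ of $f$ making $V(f) \cap (F \setminus F')$ smooth of pure dimension $d-2$ has positive density. For the scheme $F'$, which need not be smooth but has dimension at most $d-2$, a standard dimension-drop argument (only finitely many linear conditions on $f$ are required to avoid having $V(f)$ contain any given irreducible component of $F'$, and for $D$ large these linear conditions are nontrivial) shows that the set $S_3$ of $f$ with $\dim(V(f) \cap F') \leq d-3$ has density one.

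Finally, the prescribed condition at each $y_i \in Y \setminus F'$ is the vanishing of $f$ at $y_i$ together with the specification that the image of $df|_{y_i}$ in $(T_{y_i}Y)^*$ has kernel exactly $H_i$; this is a finite collection of linear conditions on the $1$-jet of $f$ at the points $y_1, \ldots, y_m$, which for $D$ sufficiently large cut out a nonempty affine subvariety $S_4 \subset \Pol_n$ of fixed (finite) codimension, hence of positive density. By the refinement of Poonen's theorem permitting the $1$-jet of $f$ to be prescribed at finitely many closed points lying in the smooth locus, $S_1 \cap S_4$ still has positive density, and the same holds after intersecting with $S_2$ and $S_3$. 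The main obstacle to be verified is precisely that these local conditions combine cleanly with the Bertini-type conditions $S_1$, $S_2$, $S_3$: one must check that ``smoothness of $V(f) \cap U$ at points outside $\{y_1, \ldots, y_m\}$'' is effectively independent of ``prescribed $1$-jet at $\{y_1, \ldots, y_m\}$'', which is the content of the local-conditions version of Poonen's theorem, and that the density computations for $S_1$ and $S_2$ remain valid when one further restricts to the closed subvariety $S_4$ obtained by fixing jets on a disjoint finite set of points.
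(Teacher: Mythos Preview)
Your approach is essentially the same as the paper's: apply Poonen's Theorem~1.3 (Bertini with prescribed local conditions) to the smooth schemes $Y\setminus F'$ and $F\setminus F'$ together with the jet conditions at the $y_i$, and handle the condition on $F'$ by a separate density argument. The paper packages this as $A\setminus B$, where $A$ is your $S_1\cap S_2\cap S_4$ and $B$ is the complement of your $S_3$.

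There is, however, a small gap in your treatment of $S_3$ when $d=2$. In that case $\dim F'\le 0$, so the condition $\dim(V(f)\cap F')\le d-3=-1$ means $V(f)\cap F'=\emptyset$. Avoiding a closed point $P$ of degree $e$ is the linear condition $f(P)\ne 0$, whose density is $1-q^{-e}$, not $1$; thus $S_3$ has density strictly less than $1$ and your ``density one'' claim fails. Your argument that the complement of $S_3$ has density $0$ works only for the positive-dimensional components of $F'$, which is enough when $d>2$ (since then $0\le d-3$ and zero-dimensional components are harmless). The paper treats $d=2$ separately: since $F'$ is then finite, the condition $V(f)\cap F'=\emptyset$ is a local condition at finitely many closed points and can be folded directly into the invocation of \cite[Theorem~1.3]{Po}, alongside the jet conditions at the $y_i$. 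With this adjustment your proof is complete and matches the paper's.
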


\begin{proof}
Let $A$ denote the set of $f\in\Pol_n$ satisfying our conditions with a possible exception of
the condition
\begin{equation}    \label{e:possible_exception}
\dim (V(f)\cap F')\le d-3 \, .
\end{equation}
Let $B\subset\Pol_n$ be the set of $f\in\Pol_n$ such that \eqref{e:possible_exception} does not hold (i.e., such that $f$ vanishes on some irreducible component of $F'$ of dimension $d-2$).
We have to show that $A\setminus B\ne\emptyset$. In fact, $A\setminus B$ has
positive density in the sense of \cite[\S1]{Po}. If $d=2$ this directly follows from 
 \cite[Theo\-rem~1.3]{Po} (because $F'$ is finite). If $d>2$ then $B$ has density 0;
on the other hand, $A$ has positive density by \cite[Theo\-rem~1.3]{Po}.
\end{proof}

\section{Weil numbers and CM-fields}   \label{s:CM}
In this appendix we justify Remark~\ref{r:CM}.

\subsection{Formulation of the result}
Fix an algebraic closure $\BQbar\supset\BQ$. Let $R\subset\BQbar$ be the maximal totally real
subfield. Let $C\subset\BQbar$ be the union of all CM-subfields.\footnote{We include finiteness over $\BQ$ in the definition of a CM-field.} Then $C=R(\sqrt{-d})$ for any
totally positive $d\in R$.

Let $p$ be a prime and $q$ a power of $p$. A number $\alpha\in\BQbar$ is said to be a 
\emph{Weil number} if it is a unit outside of $p$ and there exists 
 $n\in\BZ$ such that all complex absolute values of $\alpha$ equal $q^{n/2}$.

\begin{theorem} \label{t:CM}
The subfield of $\BQbar$ generated by all Weil numbers equals $C$.
\end{theorem}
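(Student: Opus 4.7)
The plan is to prove the two inclusions separately; write $K \subseteq \BQbar$ for the subfield generated by all Weil numbers (relative to the fixed $q$).

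For $K \subseteq C$, the claim is that each Weil number $\alpha$ lies in a CM-subfield. Suppose $\alpha$ has weight $n$, meaning $|\iota(\alpha)| = q^{n/2}$ for every embedding $\iota : \BQ(\alpha) \to \BC$. Then $\iota(q^n/\alpha) = q^n/\iota(\alpha) = \overline{\iota(\alpha)}$, which is a root of the minimal polynomial of $\alpha$; since $\iota$ is injective, $q^n/\alpha$ is a $\BQ$-conjugate of $\alpha$ inside $\BQ(\alpha)$. Hence the rule $\sigma(\alpha) := q^n/\alpha$ defines an involution of $\BQ(\alpha)$ which becomes complex conjugation under every embedding. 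Its fixed field $F$ is therefore totally real. Either $\sigma = \mathrm{id}$, in which case $\alpha = \pm q^{n/2} \in R$, or $[\BQ(\alpha) : F] = 2$; in the latter case $\gamma := \alpha - \sigma(\alpha)$ satisfies $\sigma(\gamma) = -\gamma$, so $\BQ(\alpha) = F(\gamma)$ and $\gamma^2 \in F$ is totally negative, making $\BQ(\alpha) = F(\sqrt{-d})$ a CM-field. In either case $\alpha \in C$.

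For $C \subseteq K$: since $q \in R$ is totally positive, the paper's description gives $C = R(\sqrt{-q})$, so it suffices to show $R \subseteq K$ and $\sqrt{-q} \in K$. The element $\sqrt{-q}$ satisfies $x^2 + q = 0$, hence is an algebraic integer whose norm is $q$ (so it is a unit at every finite place not over $p$), and $|\sqrt{-q}| = q^{1/2}$: it is a Weil number of weight $1$. For $R \subseteq K$, since $K$ is a field it is enough to realize every totally real algebraic integer $\beta$ as an element of $K$. Choose $n$ large enough that $|\iota(\beta)| < 2 q^{n/2}$ for every (finitely many) embedding $\iota$ of $\BQ(\beta)$. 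Then the discriminant of $x^2 - \beta x + q^n$ is totally negative, so its roots $\alpha_1, \alpha_2$ satisfy $\iota(\alpha_2) = \overline{\iota(\alpha_1)}$ and $|\iota(\alpha_j)|^2 = q^n$ in every embedding. As roots of a monic polynomial with algebraic integer coefficients, each $\alpha_j$ is an algebraic integer; and $\alpha_1 \alpha_2 = q^n$ then forces each to be a unit at every finite place not over $p$. Hence $\alpha_1, \alpha_2$ are Weil numbers of weight $n$, and $\beta = \alpha_1 + \alpha_2 \in K$.

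No step is genuinely hard; the only slightly non-obvious device is the quadratic $x^2 - \beta x + q^n$, which produces a pair of conjugate Weil numbers with prescribed sum $\beta$, and the freedom to take the weight $n$ arbitrarily large is exactly what makes the discriminant totally negative. The CM-field side uses nothing beyond the observation that $\alpha \mapsto q^n/\alpha$ is the algebraic incarnation of complex conjugation on $\BQ(\alpha)$.
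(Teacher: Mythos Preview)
Your proof is correct and takes a genuinely different route from the paper's.

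For the inclusion $C\subseteq K$, the paper proceeds structurally: it proves (Proposition~\ref{p:CM} via Lemma~\ref{l:CM}) that any CM-field $K_0$ sits inside a CM-field $K$ generated by its own Weil numbers. Lemma~\ref{l:CM} uses finiteness of the ideal class group to produce an element $a\in K$ with $|a|_{\fp'}<1$ at a single place and $|a|_v=1$ elsewhere, then takes $\alpha=a/\bar a$; the proof of Proposition~\ref{p:CM} then builds the right CM-fields via weak approximation and a choice of local field $F\supset\BQ_p$. Your argument is much more concrete: the quadratic $x^2-\beta x+q^n$ with $n$ large manufactures, for every totally real algebraic integer $\beta$, a pair of Weil numbers whose sum is $\beta$, and together with the trivial observation that $\sqrt{-q}$ is a Weil number this gives $C=R(\sqrt{-q})\subseteq K$ directly. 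You avoid ideal class groups and weak approximation entirely; the paper's method, on the other hand, yields the finer statement that a single Weil number can be chosen to generate the whole CM-field $K$ over a given totally real subfield. For the inclusion $K\subseteq C$ the paper simply cites \cite[Proposition~4]{Ho}; your direct argument via the involution $\alpha\mapsto q^n/\alpha$ is essentially the standard one behind that reference.
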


The fact that all Weil numbers are in $C$ is easy and well known 
(e.g., see \cite[Proposition~4]{Ho}). So Theorem~\ref{t:CM} follows from the next proposition,
which will be proved in \S\ref{ss:CMproof}.

\begin{prop}  \label{p:CM}
Any CM-field $K_0$ is contained in a CM-field $K$ which is generated by Weil numbers in $K$.
\end{prop}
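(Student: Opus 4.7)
\noindent\textbf{Proof plan for Proposition~\ref{p:CM}.} The strategy is to enlarge $K_0$ twice within the class of CM-fields and then exhibit enough Weil numbers — either by a direct class-field-theoretic construction or by appeal to Honda--Tate — to generate the final field over $\BQ$.

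\emph{First enlargement (splitting at $p$).} Write $K_0 = R_0(\sqrt{-d_0})$ with $d_0 \in R_0$ totally positive. By weak approximation applied to totally positive elements with prescribed local behaviour at $p$, I choose a totally positive $d \in R_0$ such that $-d$ is a square in $(R_0)_{\fp_0}$ for every prime $\fp_0$ of $R_0$ above $p$. Set $K_1 := K_0(\sqrt{-d})$; its maximal totally real subfield is $R_1 = R_0(\sqrt{d_0 d})$, and $K_1 = R_1(\sqrt{-d})$ is CM since $d$ is totally positive. By the choice of $d$, every prime of $R_1$ above $p$ splits in $K_1/R_1$, so each prime $\mathfrak{P} \mid p$ of $K_1$ satisfies $\mathfrak{P} \neq \bar{\mathfrak{P}}$ and $\mathfrak{P}\bar{\mathfrak{P}} = \mathfrak{p}\cdot O_{K_1}$ where $\mathfrak{p} := \mathfrak{P} \cap R_1$.

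\emph{Second enlargement (principal generators with rational norm).} Let $K_2$ be the Hilbert class field of $K_1$: it is CM, being stable under the complex conjugation of $K_1$ and not totally real (since it contains the totally imaginary $K_1$). In $K_2$, each $\mathfrak{P} \mid p$ of $K_1$ becomes principal, say $\mathfrak{P}\cdot O_{K_2} = (\tilde\pi_\mathfrak{P})$. Then $\tilde\pi_\mathfrak{P}\bar{\tilde\pi}_\mathfrak{P}$ lies in the totally real subfield $R_2$ and equals $p^{k}\cdot u$ for some $k \ge 1$ and some totally positive unit $u \in O_{R_2}^{\times}$. Modifying $\tilde\pi_\mathfrak{P}$ by a unit $v \in O_{K_2}^{\times}$ with $N_{K_2/R_2}(v) = u^{-1}$ produces a generator $\pi_\mathfrak{P}$ with $\pi_\mathfrak{P}\bar\pi_\mathfrak{P} = p^{k}$. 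The existence of such $v$ amounts to $u^{-1}$ being a norm from $K_2$ to $R_2$; this is a purely local condition by the Hasse norm theorem and can be forced by one further CM enlargement trivialising the finitely many local obstructions. The resulting $\pi_\mathfrak{P} \in K_2$ is a $p$-unit with rational norm $p^{k}$, hence a Weil number.

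\emph{Generation and main obstacle.} Let $K := K_2$ (possibly after the final enlargement above) and let $L \subseteq K$ be the subfield generated by all the $\pi_\mathfrak{P}$'s together with the roots of unity of $K$. The main obstacle is to verify $L = K$; nothing in the two previous steps is by itself sharp enough to force this. I would handle the last step by either of two complementary routes. \emph{(a)} A direct linear-algebra argument: the valuation vectors $\bigl(v_{\mathfrak{P}'}(\pi_\mathfrak{P})\bigr)_{\mathfrak{P}'\mid p}$ span a sublattice of full rank in $\bigoplus_{\mathfrak{P}\mid p}\BZ$, which, combined with the presence of sufficiently many roots of unity after a further enlargement, forces $[L:\BQ] = [K:\BQ]$. \emph{(b)} Invoke Honda--Tate: any CM-field $K$ admits a primitive $p$-adic CM-type, and for such a type Honda--Tate produces a Weil $q$-number $\pi \in K$ with $\BQ(\pi) = K$, realised as the Frobenius of a simple abelian variety over $\BF_q$ with complex multiplication by $O_K$. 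Route~(b) bypasses the delicate class-group-versus-unit-group bookkeeping of Step~2 and closes the argument after a single further enlargement of $K$ to accommodate a primitive CM-type.
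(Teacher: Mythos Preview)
Both completion routes in Step~3 have gaps. Route~(a) rests on Step~2, where you adjust $\tilde\pi_{\mathfrak P}$ by a unit $v\in O_{K_2}^\times$ with $v\bar v=u^{-1}$; this asks that the totally positive unit $u$ lie in the image of the norm $N_{K_2/R_2}\colon O_{K_2}^\times\to O_{R_2}^\times$, whose cokernel is in general nontrivial (the Hasse norm theorem concerns norms of field elements, not of units), and ``one further CM enlargement'' does not obviously help since both groups grow. Even granting the $\pi_{\mathfrak P}$, the claim that full-rank valuation vectors force $L=K$ is only a heuristic: it shows that distinct primes of $K$ above $p$ restrict to distinct primes of $L$, but says nothing about the local degrees $[K_{\mathfrak P}:L_{\mathfrak P\cap L}]$, which is what you need to bound $[K:L]$. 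Route~(b) is also incomplete: for a \emph{fixed} prime $p$, Honda--Tate does not produce a Weil $p^n$-number $\pi$ with $\BQ(\pi)=K$ for an arbitrary CM field $K$. Indeed, if every prime of $K$ above $p$ is fixed by complex conjugation, one checks that $\pi^2/q^n$ is a root of unity for every Weil number $\pi\in K$, so $\BQ(\pi)$ is abelian over $\BQ$; your Step~1 avoids this extreme, but even with split primes the Shimura--Taniyama slopes of a reduction need not distinguish all places of $K$, and ``primitive $p$-adic CM-type'' is not a standard notion carrying the property you assert.

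The paper avoids all of this with one device (Lemma~\ref{l:CM}): if a CM field $K$ has a prime $\fp'$ over $p$ with $K_{\fp'}=k_{\fp}$ for some totally real $k\subset K$, take $a\in K$ with $(a)=\fp'^{\,h}$ and set $\alpha:=a/\bar a$. Then $\alpha\bar\alpha=1$ automatically (no unit correction needed), so $\alpha$ is a Weil number, and since $\fp'$ is the unique prime of $K$ at which $|\alpha|<1$, a one-line local degree count gives $k(\alpha)=K$. The proposition follows by two applications: first with $k$ a totally real field having a place of completion $F\supset K_0$ (arranged by weak approximation), so that Weil numbers in $K_1:=k\cdot K_0\subset F$ generate $K_1$ over $k$; second with a CM field $k(\sqrt{-d})$ having a place of completion $\BQ_p$, so that Weil numbers generate $k(\sqrt{-d})\supset k$ over $\BQ$. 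Your first enlargement is close in spirit to the $K_1$ step, but for the descent from $k$ to $\BQ$ the ratio $a/\bar a$ together with a second use of the lemma (with $k=\BQ$) replaces the Hilbert-class-field and Honda--Tate machinery entirely.
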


\subsection{Proof of Proposition~\ref{p:CM}}  \label{ss:CMproof}
\begin{lem}  \label{l:CM}
Let $K$ be a CM-field and $k\subset K$ a totally real subfield. Suppose that there exists
a place $\fp$ of $k$ and a place $\fp'$ of $K$ such that $\fp |p$, $\fp' |\fp$,  and the map
$k_{\fp}\to K_{\fp'}$ is an isomorphism. Then Weil numbers in $K$ generate $K$ over $k$.
\end{lem}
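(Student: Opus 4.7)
The plan is in three steps: use the hypothesis to split a prime of $K_+$ in $K/K_+$; produce a single Weil number $\beta\in K$ whose divisor is supported only at the resulting pair of primes; and deduce $k(\beta)=K$ by a local degree computation at $\fp'$. Let $c$ denote complex conjugation and $K_+\subseteq K$ the maximal totally real subfield, so $[K:K_+]=2$ and $c$ generates $\Gal(K/K_+)$. Set $\fp''_+:=\fp'\cap K_+$. Since $k\subseteq K_+$ we have
\[
k_{\fp}\subseteq (K_+)_{\fp''_+}\subseteq K_{\fp'}=k_{\fp},
\]
so $(K_+)_{\fp''_+}=K_{\fp'}$ and hence $e(\fp'/\fp''_+)=f(\fp'/\fp''_+)=1$. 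Since $[K:K_+]=2$, this forces $\fp''_+$ to split in $K/K_+$ as $\fp'\cdot c(\fp')$ with $c(\fp')\neq \fp'$.

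Next, choose $h\geq 1$ with $\fp'^h$ principal in $\cO_K$ (for instance, $h$ equal to the class number of $K$), pick $\gamma\in K^{\times}$ with $(\gamma)=\fp'^h$, and write $\bar\gamma:=c(\gamma)$. Set $\beta:=\gamma/\bar\gamma$. Then $\beta\, c(\beta)=(\gamma/\bar\gamma)(\bar\gamma/\gamma)=1\in\BQ^{\times}$, and $(\beta)=(\gamma)/(\bar\gamma)=\fp'^h\cdot c(\fp')^{-h}$ is supported only at places above $p$. Hence $\beta$ is a Weil number of weight $0$ with valuation profile $v_{\fp'}(\beta)=h$, $v_{c(\fp')}(\beta)=-h$, and $v_{\tau}(\beta)=0$ for every other place $\tau$ of $K$.

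Finally, set $L:=k(\beta)\subseteq K$ and $\fq:=\fp'\cap L$. For every prime $\tau$ of $K$ above $\fq$ one has $v_{\tau}(\beta)=e(\tau/\fq)\cdot v_{\fq}(\beta)$, and $v_{\fq}(\beta)=h/e(\fp'/\fq)>0$, so $v_{\tau}(\beta)>0$ for every such $\tau$. Comparing with the valuation profile of $\beta$ forces $v_{\tau}(\beta)=h$ and $\tau=\fp'$; in particular $c(\fp')$ is not above $\fq$, and $\fp'$ is the unique prime of $K$ above $\fq$. The fundamental identity then gives
\[
[K:L]=[K_{\fp'}:L_{\fq}]=[k_{\fp}:k_{\fp}]=1,
\]
using $k_{\fp}\subseteq L_{\fq}\subseteq K_{\fp'}=k_{\fp}$. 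Hence $K=k(\beta)$, so $\beta\in W$ alone already generates $K$ over $k$. The technical heart of the argument is step two: the specific form $\beta=\gamma/\bar\gamma$ makes $\beta\, c(\beta)$ exactly equal to $1$ rather than merely a unit of $\cO_{K_+}^{\times}$ times a power of $p$, so no separate unit adjustment is needed before the local collapse in step three can be carried out.
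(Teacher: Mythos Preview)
Your proof is correct and follows essentially the same route as the paper: both construct $\gamma\in K^{\times}$ with $(\gamma)=\fp'^{h}$, set $\beta=\gamma/\bar\gamma$ (the paper writes $\alpha=a/\bar a$), observe this is a Weil number whose only place of absolute value $<1$ is $\fp'$, and then use the local collapse $k_{\fp}\subseteq L_{\fq}\subseteq K_{\fp'}=k_{\fp}$ together with the fundamental identity to conclude $[K:k(\beta)]=1$. Your explicit passage through $K_+$ to verify $c(\fp')\neq\fp'$ is a bit more detailed than the paper's one-line parenthetical, but the argument is the same; note that the symbol $W$ in your final sentence is never defined.
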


\begin{proof}
Since the ideal class group of $K$ is finite there exists $a\in K$ such that $|a|_{\fp'}<1$ and
all other nonarchimedean absolute values of $a$ equal $1$. In particular, 
$|\bar a|_{\fp'}=|a|_{\bar\fp'}=1$ (note that the assumption $K_{\fp'}=k_{\fp}$ implies that 
$\bar\fp'\ne \fp'$). The number $\alpha:=a/\bar a$ is a Weil number. Let $\tilde K\subset K$ be the subfield generated by $\alpha$ over $k$. We will show that $\tilde K= K$.

Let $\tilde\fp$ be the place of $\tilde K$ corresponding to $\fp'$. Then $\fp'$ is the only
place of $K$ over $\tilde\fp$ (this follows from the fact that
$|\alpha |_{\fp'}<1$ and all other nonarchimedean absolute values of $\alpha$ are $\ge 1$).
On the other hand, $K_{\fp'}=\tilde K_{\tilde\fp}=k_{\fp}$.
So $[K:\tilde K]=[K_{\fp'}:\tilde K_{\tilde\fp}]=1$.
\end{proof}

\begin{proof}[Proof of Proposition~\ref{p:CM}]
Let $K_0$ be a CM-field. There exists a finite extension
$F\supset\BQ_p$ such that $K_0$ admits an embedding $i:K_0\mono F$. Fix $F$ and $i$. By weak approximation, there exists a totally real field $k$ with $k\otimes\BQ_p=\BQ_p\times F$.
Note that both $k$ and $K_0$ are subfields of $F$. The composite field 
$K_1:=k\cdot K_0\subset F$ is a CM-field. Applying Lemma~\ref{l:CM} to the place of $K_1$
corresponding to the embedding $K_1\mono F$, we see that Weil numbers in $K_1$ generate $K_1$ over $k$. 

On the other hand, let $d\in k$ be a totally positive element such that the
$\BQ_p$-algebra $k(\sqrt{-d})\otimes\BQ_p$ has $\BQ_p$ as a factor. Then by Lemma~\ref{l:CM},
Weil numbers in $k(\sqrt{-d})$ generate $k(\sqrt{-d})$ over $\BQ$. 

Now set $K:=K_1(\sqrt{-d})$. Then $K$ is a CM-field such that Weil numbers in $K$ generate $K$ over $\BQ$.
\end{proof}

\bigskip

\bibliographystyle{ams-alpha}

\end{document}